\documentclass[11pt,leqno]{amsart}

\usepackage{amsmath,amsthm}%,amsfonts}
\usepackage {latexsym}
\usepackage{amssymb}

\setlength{\textwidth}{169.0mm}
\setlength{\oddsidemargin}{-2.0mm}
\setlength{\evensidemargin}{-1.0mm}
\setlength{\textheight}{8.4in}

\topmargin 0pt           % Nominal distance from top of page to top of
                         %    box containing running head.
\headsep 10pt            %    Space between running head and text.

% DIMENSION OF TEXT:

%\textheight 7.6in      %Height of text(including footnotes and figures,
                         % excluding running head and foot).
%\textwidth 6.9in         % Width of text line.

\newcommand{\supp}{\mbox{\rm supp}}

\newcommand{\bear}{\begin{eqnarray}}
\newcommand{\eear}{\end{eqnarray}}
\newcommand{\eps}{{\varepsilon}}
\newcommand{\coker}{{\rm coker}\,}
\newcommand{\R}{{\mathbb R}}
\newcommand{\T}{{\mathbb T}}
\newcommand{\Z}{{\mathbb Z}}

\newcommand{\Compl}{{\mathbb C}}

\newcommand{\les}{\lesssim}
\newcommand{\gtr}{\gtrsim}

\newtheorem{theorem}{Theorem}
\newtheorem{lemma}[theorem]{Lemma}

\newtheorem{corollary}[theorem]{Corollary}

\newtheorem{proposition}[theorem]{Proposition}

\theoremstyle{remark}
\newtheorem{remark}{Remark}
\def\Uplus{U^+}
\def\Uminus{U^-}

\def\Rminus{R^-}

\def\la{\langle}
\def\ra{\rangle}
\def\norm[#1][#2]{\|#1\|_{#2}}
\def\bignorm[#1][#2]{\Big\|#1\Big\|_{#2}}
\def\japanese[#1]{\langle #1 \rangle}
\def\Im[#1]{{\rm Im}(#1)}
\def\Re[#1]{{\rm Re}(#1)}
\newcommand{\txt}{\textstyle}

%\numberwithin{equation}{section}

\begin{document}

\title[The Schr\"odinger equation with time-periodic $L^{n/2}$ potentials]
{Strichartz estimates for the Schr\"odinger equation with time-periodic
$L^{n/2}$ potentials}
\date{August 10, 2007}

\author{Michael\ Goldberg}
\thanks{The author received partial support from NSF grant DMS-0600925 during
the preparation of this work.}
\address{Department of Mathematics, Johns Hopkins University,
3400 N. Charles St., Baltimore, MD 21218}
\email{mikeg@math.jhu.edu} 

\begin{abstract}
We prove Strichartz estimates for the Schr\"odinger operator 
$H = -\Delta + V(t,x)$ with time-periodic complex potentials $V$ belonging to
the scaling-critical space $L^{n/2}_x L^\infty_t$ in dimensions
$n \ge 3$.  This is done directly from estimates on the resolvent rather
than using dispersive bounds, as the latter generally require a stronger 
regularity condition than what is stated above.  In typical fashion, we project
onto the continuous spectrum of the operator and must assume an absence of
resonances.  Eigenvalues are permissible at any location in the spectrum, 
including at threshold energies, provided that the associated eigenfunction
decays sufficiently rapidly.  
\end{abstract}

\maketitle

\section{Introduction}
The past decade has seen considerable progress in identifying classes of
Schr\"odinger operators which retain the same dispersive properties as the
Laplacian.  In many cases these operators are described by a simple 
perturbation of the Laplacian, taking the form $H = -\Delta + L(t,x)$.
Typically $L$ is a linear self-adjoint differential operator of degree 
$d = 0,1,2$ representing electrostatic, magnetic, and/or geometric
perturbations, respectively.  In this paper we consider the Floquet-type
potential $L(t,x) = V(t,x)$ satisfying $V(t+2\pi,x) = V(t,x)$ for all 
$t \in \R$ and $x \in \R^n$.

We do not assume any self-adjointness in our main theorem, instead allowing
$V$ to be a complex-valued function.  Further improvements for real and/or
time-independent potentials are examined as corollaries and applications
of the first result.

The propagator $e^{-it\Delta}$ of the free Schr\"odinger equation in $\R^n$
may be represented as a convolution operator with kernel $(4\pi it)^{-n/2}
e^{-i(|x|^2/4t)}$.  From this formula it is clear that the free evolution
satisfies the dispersive bound $\norm[e^{it\Delta}][1\to\infty] \le 
(4\pi|t|)^{-n/2}$ at all times $t \not= 0$.  A $TT^*$ argument combined with
fractional integration bounds for the $t$ variable then leads to
the family of Strichartz inequalities
\begin{equation} \label{eq:Strichartz}
\norm[e^{-it\Delta}u_0][L^p_tL^q_x] \le C_p\norm[u_0][2], \quad
\frac{2}{p} + \frac{n}{q} = \frac{n}{2}, \quad p,q\in [2,\infty]
\end{equation}
for all $u_0 \in L^2(\R^n)$. 
%and all exponents $p,q \in [2,\infty]$ satisfying $\frac{2}{p} + \frac{n}{q} 
%= \frac{n}{2}$.  
To be precise, the $p=2$ endpoint
requires a more detailed computation~\cite{KT} and is false when $n=2$.
We will focus on dimensions $n\ge 3$ in order to take advantage of the full
range of exponents $p \in [2,\infty]$ in~\eqref{eq:Strichartz}.

The Schr\"odinger propagator of $H$ generally fails to satisfy estimates 
like~\eqref{eq:Strichartz} due to the possible existence of bound
states, quasiperiodic solutions obeying $u(t + 2\pi,x) = e^{2\pi i\lambda}u(t,x)$
for all $t,x \in \R^{1+n}$ and possessing moderate spatial decay.
These are best understood in terms of the Floquet Hamiltonian
\begin{equation}
K = i\partial_t - \Delta_x + V(t,x)
\end{equation}
acting on $2\pi$-periodic functions with domain $\T \times \R^n$.
Precisely, each bound state $\phi(t,x)$ solves the distributional equation
$(K - \lambda)e^{-i\lambda t}\phi = 0$.  If $e^{-i\lambda t}\phi$ is
time-periodic and belongs to the space $L^2(\T \times \R^n)$ then it is an
eigenfunction of $K$ with eigenvalue $\lambda$.
We say that $K$ has a {\em resonance} at $\lambda$ if the resolvent
$(K-(\lambda\pm i0))^{-1}$ is singular but the associated 
``eigenfunction'' is not square-integrable.  The precise definition is 
postponed until Section~\ref{sec:Inverses}, where we attempt to estimate
the resolvent of $K$ in the neighborhood of singularities. 
% but instead belongs to thespace $L^{\frac{2n}{n+2}}(\R^n; L^2(\T)$.
The spectrum of $K$ is invariant under integer shifts, as
$(K - n) = e^{-int}K e^{i nt}$ for any $n \in \Z$.

%Remark: To be precise, what we are calling a resonance is a singularity of
%the resolvent of $K$ within its initial domain of definition.  We will not
%be concerned with singularities of the holomorphic continuation, as these
%generally lead to well-behaved exponentially decaying solutions.

Because our assumptions do not imply that $K$ is self-adjoint, the spectrum of
$K$ need not be confined to the real axis.  Each eigenfunction
$\phi_\lambda$ with $\lambda \not\in \R$ illustrates the related 
lack of an $L^2$ conservation law for solutions of the Schr\"odinger equation.
Since $|e^{2\pi i\lambda}| = e^{-2\pi \Im[\lambda]} \not= 1$, the $L^2$ norm of
$\phi_\lambda$ decreases exponentially in one time direction and grows in the
other.
%Bound states of this type will be called {\em exponential}, whereas those 
%with $\lambda \in \R$ will be called {\em quasi-periodic}.

For each $\lambda \in \Compl$ define $N_\lambda$ to be the solution space
\begin{equation} \label{eq:Nlambda}
N_\lambda = \{\phi: (K-\lambda)e^{-i\lambda t}\phi = 0,\, e^{-i\lambda t}\phi
 \in L^2(\T\times\R^n)\}
\end{equation}
Local regularity theory dictates that every true eigenfunction also satisfies
$e^{-i\lambda t}\phi \in C(\T; L^2(\R^n))$.  It is then permissible to discuss
the initial value of an eigenfunction, $\Phi = \phi(0,\,\cdot\,)$.  The
projection of $N_\lambda$ onto the space of initial data has as its image
\begin{equation} \label{eq:Xlambda}
X_\lambda = \{\Phi: \phi \in N_\lambda\} \subset L^2(\R^n).
\end{equation}
We will show via a compactness argument that both
$N_\lambda$ and $X_\lambda$ are always finite dimensional.
Similarly define $\tilde{N}_\lambda$ and $\tilde{X}_\lambda$ to represent the
eigenfunctions of $\overline{K}$ (the Floquet operator with potential
$\overline{V(t,x)}$) that have eigenvalue $\bar{\lambda}$.  These spaces are
all invariant under real integer translations.

In this paper we prove that the Schr\"odinger evolution of $H = -\Delta +
V(t,x)$ observes a space-time estimate identical to~\eqref{eq:Strichartz}
once a finite-dimensional space of bound states are projected away. 
Our primary assumptions are that $V(t,x)$ be periodic and belong to the
scaling-invariant space $L^{n/2}_x L^\infty_t$ and that each of the
bound states is an eigenfunction of sufficient decay and/or regularity.
If we further assume that $V$ is real-valued with polynomial pointwise decay
and some smoothness with respect to $t$,
%$\japanese[x]^{1+\eps}V(t,x) \in L^{n}_xL^\infty_t$,
then only the bound states at $\lambda \in \Z$ are a concern, and only in 
dimensions $n \le 6$.  Improvements of this type are discussed immediately
following our statement of the main theorem.

\begin{theorem} \label{thm:main}
Let $V(t,x)$ be a time-periodic function on $\R^{1+n}$, $n \ge 3$, satisfying
$V(t+2\pi,x) = V(t,x)$ at almost every $t,x$ and belonging to the class
$L^{n/2}_x L^\infty_t$.  Suppose that $K$ and $\overline{K}$ have no
resonances along the real axis, and that their behavior at each
eigenvalue $\lambda \in \Compl$ satisfies the conditions
\renewcommand{\theenumi}{(C\arabic{enumi})}
\renewcommand{\labelenumi}{\theenumi}
\begin{enumerate}
\item \label{C1}
 $e^{-i\lambda t}N_\lambda$ and $e^{-i\bar{\lambda} t}\tilde{N}_\lambda$
are both contained in
 $L^{\frac{2n}{n+2}}(\R^n; L^2(\T)) + \japanese[x]^{-1} L^2(\T \times \R^n)$,
\item \label{C2}
 $X_{\lambda}$ and $\tilde{X}_\lambda$ are subspaces of
 $\japanese[x]^{-1}L^2(\R^n) + W^{1,\frac{2n}{n+2}}(\R^n)$,
\item \label{C3}
The $L^2$-orthogonal projection of $X_\lambda$ onto $\tilde{X}_\lambda$
is bijective.
\end{enumerate}
%and that condition~\ref{C3} additionally holds at every complex eigenvalue 
%$\lambda \in \Compl/\Z$.
 
Under these assumptions, there exist at most finitely many eigenvalues of
$K$, $\overline{K}$ in the strip $\Compl/\Z$, counted with multiplicity.
Furthermore, the initial value problem for the Schr\"odinger equation
\begin{equation}\label{eq:Schr}
\begin{cases} (i\partial_t -\Delta_x  + V(t,x))&u(t,x) = 0, \quad  
  x\in \R^n, t\in \R  \\
    &u(0,x) = f(x), \quad x\in \R^n \end{cases}
\end{equation}
possesses a unique weak solution in the Strichartz space 
$L^2_tL^{2n/(n-2)}_x$, satisfying
\begin{equation} \label{eq:newStrich}
\norm[u][L^2_tL^{2n/(n-2)}_x] + \norm[u][C_b(\R; L^2(\R^n))]\ \les\ \norm[f][2]
\end{equation}
for all initial data $f$ in the $L^2$-orthogonal complement of
$\tilde{X} = \oplus_\lambda \tilde{X}_\lambda$.
\end{theorem}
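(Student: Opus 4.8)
The plan is to recast \eqref{eq:newStrich} as an invertibility statement for a perturbed Duhamel operator, reduce that via Floquet analysis to a limiting absorption estimate for the resolvent of $K$, and control the resolvent by uniform Sobolev bounds together with Fredholm theory, invoking \ref{C1}--\ref{C3} only to excise the eigenvalue singularities. Write $TF(t) = -i\int_0^t e^{-i(t-s)\Delta}F(s)\,ds$ for the retarded Duhamel operator. The inhomogeneous double-endpoint estimate of Keel--Tao gives $T\colon L^2_tL^{2n/(n+2)}_x \to L^2_tL^{2n/(n-2)}_x$ boundedly when $n\ge 3$, while H\"older's inequality together with the critical integrability of $V$ -- using $\tfrac12 = \tfrac2n + \tfrac{n-2}{2n}$ in $x$ and $L^\infty_t\cdot L^2_t\subset L^2_t$ -- makes multiplication by $V$ a bounded map $L^2_tL^{2n/(n-2)}_x \to L^2_tL^{2n/(n+2)}_x$. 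A weak solution of \eqref{eq:Schr} therefore satisfies $(I+TV)u = e^{-it\Delta}f$ in $L^2_tL^{2n/(n-2)}_x$, so the theorem reduces to showing that $I+TV$ is invertible on $L^2_tL^{2n/(n-2)}_x$ modulo a finite-rank correction coming from the bound states, with $e^{-it\Delta}f$ lying in the good part of the range whenever $f$ is orthogonal to $\tilde X$. The $C_b(\R;L^2)$ bound and the remaining members of \eqref{eq:Strichartz} then follow from the same Duhamel identity, the $(\infty,2)$ inhomogeneous Strichartz estimate, interpolation, and the Christ--Kiselev lemma for the non-endpoint retarded pieces.

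To analyze $I+TV$ I pass to the Floquet side via Howland's correspondence: expanding $2\pi$-periodic profiles in Fourier series in $t$, the invertibility of $I+TV$ on space-time is governed by that of the family $I + (K_0-\lambda)^{-1}V$ acting between $L^{2n/(n+2)}_xL^2_t$ and $L^{2n/(n-2)}_xL^2_t$ over $\T\times\R^n$, with $\lambda$ ranging over the closed half-strip $\{\Im[\lambda]\ge 0\}/\Z$ and $K_0 = i\partial_t - \Delta_x$ the free Floquet operator; uniform invertibility of this family, with limiting values on the real axis, transfers back. The free input is a limiting absorption estimate for $K_0$: on the $k$-th Fourier mode in $t$ the resolvent $(K_0-\lambda)^{-1}$ acts as the free Schr\"odinger resolvent $(-\Delta-\lambda-k)^{-1}$, and the Kenig--Ruiz--Sogge uniform Sobolev inequality bounds the latter $L^{2n/(n+2)}_x \to L^{2n/(n-2)}_x$ with a constant independent of the spectral parameter -- precisely the scaling invariance that the critical hypothesis on $V$ mirrors. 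Summing in $k$ by Plancherel in $t$, and running the Agmon--Kato--Kuroda weighted variant in parallel, shows $(K_0-\lambda)^{-1}$ carries $L^{2n/(n+2)}_xL^2_t + \langle x\rangle L^2(\T\times\R^n)$ into $L^{2n/(n-2)}_xL^2_t + \langle x\rangle^{-1}L^2(\T\times\R^n)$ uniformly up to the real axis -- the very spaces that appear in \ref{C1} and \ref{C2}.

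Invertibility of $I + (K_0-\lambda)^{-1}V$ is then obtained from analytic Fredholm theory, after circumventing the non-compactness of the critical embedding: truncating $V$ to a bounded, compactly supported $V_R$ yields a compact, $\lambda$-analytic operator, while the tail $V-V_R$ contributes a uniformly small operator-norm error. Hence $I + (K_0-\lambda)^{-1}V$ is meromorphically invertible on $\overline{\Compl^+}/\Z$, with poles precisely at the real resonances of $K$ -- excluded by hypothesis -- and at the eigenvalues of $K$; discreteness of the pole set, together with its confinement to a bounded region of the strip (the Neumann series converges once $|\Im[\lambda]|$ is large, the free resolvent norm decaying there), forces only finitely many eigenvalues in $\Compl/\Z$, and symmetrically for $\overline K$. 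Near a true eigenvalue $\lambda_0$ the principal part of $(I + (K_0-\lambda)^{-1}V)^{-1}$ is finite rank, and chasing it through the symmetrized resolvent identity identifies its range with $e^{-i\lambda_0 t}N_{\lambda_0}$ and its cokernel with $e^{-i\overline{\lambda_0}t}\tilde N_{\lambda_0}$; conditions \ref{C1}--\ref{C2} place these subspaces in the source and target spaces of the previous step, which makes the $L^2$-orthogonal projection $P_{\tilde X}$ onto $\tilde X = \oplus_\lambda \tilde X_\lambda$ bounded simultaneously on $L^2_x$ and on $L^{2n/(n\pm 2)}_xL^2_t$, while \ref{C3} -- bijectivity of the $L^2$-pairing $X_{\lambda_0}\to\tilde X_{\lambda_0}$ -- makes $I-P_{\tilde X}$ complementary to the Riesz bound-state projection of $K$. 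Restricting to data orthogonal to $\tilde X$ therefore annihilates every singular contribution; $I + (K_0-\lambda)^{-1}V$ is genuinely invertible there, uniformly up to the real axis, and feeding this through the resolvent identity and Howland's reduction yields boundedness of $(I+TV)^{-1}$ on the relevant subspace, hence \eqref{eq:newStrich}.

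The main obstacle is the confluence of three features that \ref{C1}--\ref{C3} are designed to meet. The critical, non-compact $L^{n/2}$ regularity forbids a direct Fredholm argument and forces the truncation-plus-analytic-Fredholm scheme, with every estimate required uniform in the spectral parameter -- which is exactly why the scaling-invariant uniform Sobolev bound, rather than a dispersive estimate, is the correct tool. The absence of self-adjointness means the bound states must be removed by a biorthogonal, not orthogonal, projection, and matching the ranges and cokernels of the finite-rank singular part is the content of \ref{C3}. Finally, eigenvalues are permitted at threshold energies, where the free resolvent is itself singular and the eigenfunctions only barely fail to lie in weighted $L^2$; verifying that the bound-state subspace nonetheless splits off cleanly and that $P_{\tilde X}$ remains bounded on the endpoint space $L^2_tL^{2n/(n-2)}_x$ at such energies is the technical heart of the argument, and is precisely what conditions \ref{C1} and \ref{C2} supply.
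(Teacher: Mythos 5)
Your proposal correctly identifies the overall architecture — Duhamel reduction, passage to the Floquet resolvent via partial Fourier transform, uniform Sobolev (Kenig--Ruiz--Sogge) bounds, analytic Fredholm theory with truncation to handle the non-compact critical $L^{n/2}$ class, and the identification of range and cokernel of the residue with $e^{-i\lambda_0 t}N_{\lambda_0}$ and $e^{-i\bar\lambda_0 t}\tilde N_{\lambda_0}$. All of this matches the paper, modulo the cosmetic choice of working with $I - iw\Uplus wz$ on $L^2_tL^2_x$ after the symmetrized factorization $V = w\cdot wz$ rather than with $I + TV$ on Strichartz spaces directly; the paper's choice has the advantage of keeping everything in a Hilbert-space setting for the Fredholm/Plancherel bookkeeping.

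The genuine gap is in the final step. You assert that orthogonality of $f$ to $\tilde X$ ``annihilates every singular contribution,'' and separately that \ref{C1}--\ref{C2} make $P_{\tilde X}$ bounded on $L^{2n/(n\pm 2)}_xL^2_t$; you then declare the inverse uniformly bounded up to the real axis on the complementary subspace. Neither of these claims closes the estimate. Orthogonality kills the residue at the pole $\lambda_0$, but the resulting expression still carries a factor of order $|\lambda-\lambda_0|^{-1}$ multiplying an inner product that vanishes to first order at $\lambda_0$, and what the Strichartz estimate requires is an $L^2$ bound over $\lambda'\in[0,1]$ of this quotient, uniformly in $\mu\le 0$. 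The paper obtains it by an exact Plancherel calculation in the Floquet parameter: the pole $|1+i\cot\pi(\lambda-\lambda_j)|$ Fourier-inverts into a geometric sequence in the integer $k$ dual to $\lambda'$, converting the $L^2([0,1])$ norm of the bad term into the discrete-time sum $\sum_{k}e^{4\pi(\mu-\mu_j)k}|\langle f, e^{2\pi ik\Delta}\tilde\Phi_j\rangle - \langle f,\tilde\Phi_j\rangle|^2$. At a threshold eigenvalue ($\mu_j = 0$, $\mu\to 0^-$) the exponential weight disappears, unitarity of $e^{2\pi ik\Delta}$ alone gives only $\ell^\infty_k$ control, and the missing ingredient is precisely the discrete-time Kato smoothing bound
\begin{equation*}
\sum_{k\in\Z} |\langle e^{-2\pi ik\Delta}f, \psi\rangle|^2 \lesssim \norm[f][2]^2\,\|\psi\|^2_{\langle x\rangle^{-1}L^2 + W^{1,2n/(n+2)}},
\end{equation*}
proved in the paper via Stein--Tomas restriction and a H\"older continuity estimate for sphere restrictions of $\hat\psi$. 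This is the role of \ref{C2}: it is not a boundedness-of-projection condition but a decay/regularity condition on $\tilde\Phi_j$ making this discrete smoothing bound applicable. Without identifying this lemma (or some substitute mechanism for summing the time-sampled overlaps), your argument does not reach the conclusion in the threshold case, which the theorem explicitly allows.

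A secondary remark: showing $P_{\tilde X}$ bounded on $L^{2n/(n-2)}_xL^2_t$ is not what \ref{C1}--\ref{C2} deliver, and even if it held it would not by itself give uniform invertibility on the complement: the operator $I+(K_0-\lambda)^{-1}V$ is not block-diagonal with respect to that splitting, and a uniform bound on the ``good'' block still requires the quantitative pole analysis above.
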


\begin{remark}
In the general case, where $K$ is not self-adjoint, the conclusion that
$u \in C_b(\R; L^2(\R^n))$ for most initial data is a nontrivial
$L^2$-stability result.
\end{remark}

\begin{remark}
If $V$ is real-valued, then each eigenvalue $\lambda$ is also real. 
Since $\overline{K} = K$, it also follows
that $\tilde{N}_\lambda = N_\lambda$ and $\tilde{X}_\lambda = X_\lambda$,
making the condition~\ref{C3} unnecessary.
\end{remark}

%Remark: If $V$ is symmetric in time so that $V(t,x) = V(-t,x)$, then
%\tilde{X}_\lambda$ is the complex conjugate of $X_\lambda$, based on the
%eigenfunctions $\overline{\phi(-t, x)}$.

\begin{remark} 
The unweighted portion of condition~\ref{C2} is not sharp in terms
of the number of derivatives required. Lemma~\ref{lem:discreteKato} and its
supporting propositions construct a family of lower-regularity spaces which may
be used in place of $W^{1,2n/(n+2)}(\R^n)$.
\end{remark}

\begin{corollary}
Suppose that the time-periodic potential $V(t,x)$ is real valued and satisfies
the bound
\begin{equation} \label{eq:extra}
\sup_{x\in\R^n} \japanese[x]^\beta \norm[V(\,\cdot\,,x)][H^s(\T)] < \infty
\end{equation}
for some $\beta > 2$ and $s > \frac12$.  The Strichartz estimates in 
Theorem~\ref{thm:main} are valid provided that $\lambda \in \Z$ is not a 
resonance, and any eigenvectors at $\lambda \in \Z$ belong to 
$\japanese[x]^{-1}L^2$.

In dimensions $n \ge 7$, Theorem~\ref{thm:main} is valid for all real-valued
potentials satisfying~\eqref{eq:extra}.  No further conditions are
necessary.
\end{corollary}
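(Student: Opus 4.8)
The plan is to derive the Corollary from Theorem~\ref{thm:main} by checking that hypothesis~\eqref{eq:extra} --- supplemented, when $n\le 6$, by the conditions stated at $\lambda\in\Z$ --- forces every assumption of that theorem. Three reductions are immediate. First, \eqref{eq:extra} with $\beta>2$ and $s>\tfrac12$ places $V$ in the scaling-critical class: $H^s(\T)\hookrightarrow C(\T)$ gives $\norm[V(\cdot,x)][L^\infty_t]\les\norm[V(\cdot,x)][H^s(\T)]$, and $\japanese[x]^{-\beta}\in L^{n/2}(\R^n)$ precisely because $\beta n/2>n$, so $\norm[V][L^{n/2}_xL^\infty_t]<\infty$. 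Second, since $V$ is real-valued $\overline K=K$, the spectrum of $K$ is real, $\tilde N_\lambda=N_\lambda$ and $\tilde X_\lambda=X_\lambda$, and \ref{C3} holds automatically (the $L^2$-orthogonal projection of $X_\lambda$ onto itself is the identity). Third, by the $\Z$-invariance of the spectrum it is enough to study the singularities of $(K-(\lambda\pm i0))^{-1}$ within one period, so that among the integers only $\lambda=0$ must be examined. There remains the task of confining all eigenvalues and resonances of $K$ to $\Z$ and of verifying \ref{C1}--\ref{C2} at $\lambda=0$.

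For the first part, expand a putative eigenfunction or resonance function $\psi=e^{-i\lambda t}\phi$ of $K$ in the time variable, $\psi=\sum_{k\in\Z}e^{ikt}\psi_k(x)$; with $K=i\partial_t-\Delta+V$ the equation $(K-\lambda)\psi=0$ becomes, channel by channel, $(-\Delta-(k+\lambda))\psi_k=-(V\psi)_k$, the channels being coupled only through $V$, whose $t$-Fourier coefficients are summable and mildly decaying because $V(\cdot,x)\in H^s(\T)$ with $s>\tfrac12$. The thresholds of the free Floquet operator $i\partial_t-\Delta$ sit exactly where some channel meets the bottom of the spectrum of $-\Delta$, i.e.\ at $\lambda\in\Z$; for $\lambda\notin\Z$ every channel is either closed ($k+\lambda<0$, exponentially decaying) or open ($k+\lambda>0$, positive-energy oscillatory), none at threshold. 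Since $V$ obeys the bound $\japanese[x]^{-\beta}$ with $\beta>2$ we are squarely in the Agmon--Kato--Kuroda regime: the Floquet analogue of Kato's theorem on the absence of positive eigenvalues, together with the limiting-absorption estimates for $K$ developed in Section~\ref{sec:Inverses}, forces $\psi$ to vanish in the open channels and decay exponentially in the closed ones, and shows that $(K-(\lambda\pm i0))^{-1}$ has neither a pole nor a resonance at any $\lambda\notin\Z$. This unique-continuation/limiting-absorption step is where the decay exponent $\beta>2$ and the time-regularity $s>\tfrac12$ are genuinely used, and I expect it to be the principal obstacle; what follows is bookkeeping.

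For $\lambda=0$ the same argument makes $\psi$ vanish in every open channel $k\ge 1$ and decay exponentially in every closed channel $k\le-1$, so its slow part is carried entirely by the threshold mode $\psi_0$, which satisfies the static equation $(-\Delta+\langle V\rangle)\psi_0=O(e^{-c|x|})$, where $\langle V\rangle$ is the time-average of $V$ (again real and short-range) and the right-hand side comes from the exponentially small closed channels; the classical decay theory for threshold states of short-range Schr\"odinger operators then gives $\psi_0=O(|x|^{2-n})$, hence $\psi=O(|x|^{2-n})$. When $n\le 6$ we invoke the stated hypotheses directly: $\lambda=0$ is assumed not to be a resonance and the eigenvectors there lie in $\japanese[x]^{-1}L^2$; propagating this decay to all times through the resolvent identity for $K$ puts $e^{-i\lambda t}N_\lambda$ inside $\japanese[x]^{-1}L^2(\T\times\R^n)$ and, restricting to $t=0$, $X_\lambda$ inside $\japanese[x]^{-1}L^2(\R^n)$, so \ref{C1}--\ref{C2} hold with their weighted summands alone. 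When $n\ge 7$ nothing further need be assumed: $|x|^{2-n}\in L^2$ near infinity already for $n\ge 5$, so every threshold state is genuinely square-integrable --- hence a true eigenfunction, not a resonance --- and $\japanese[x]\,|x|^{2-n}\in L^2$ near infinity exactly when $n>6$, so that eigenfunction lies in $\japanese[x]^{-1}L^2$ and again satisfies \ref{C1}--\ref{C2}. In every case the hypotheses of Theorem~\ref{thm:main} are met, so $K$ has at most finitely many eigenvalues in $\Compl/\Z$ and the Strichartz estimate~\eqref{eq:newStrich} holds for every $f$ in the $L^2$-orthogonal complement of $\tilde X=X$.
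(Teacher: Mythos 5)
Your overall strategy --- reduce to verifying the hypotheses of Theorem~\ref{thm:main}, using the decay and regularity in~\eqref{eq:extra} --- is the same as the paper's, and your checks that $V\in L^{n/2}_xL^\infty_t$, that~\ref{C3} is automatic in the self-adjoint case, and your threshold analysis at $\lambda=0$ (Green's-function decay $|x|^{2-n}$, hence $L^2$ for $n\ge 5$ and $\japanese[x]^{-1}L^2$ precisely for $n\ge 7$) all agree with the paper.

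However, there is a genuine gap in your treatment of $\lambda\notin\Z$. You assert that a ``Floquet analogue of Kato's theorem on the absence of positive eigenvalues,'' combined with the estimates of Section~\ref{sec:Inverses}, shows that $(K-(\lambda\pm i0))^{-1}$ has \emph{no pole and no resonance} at any non-integer $\lambda$. That is considerably stronger than what is known or what the paper uses, and you offer no argument for it. No such unique-continuation/absence-of-embedded-eigenvalues theorem is established for time-periodic Floquet Hamiltonians in this generality, and Section~\ref{sec:Inverses} provides structural bounds on $T(\lambda)^{-1}$ near singularities via Fredholm theory --- it does not and cannot rule out the singularities themselves. The paper's actual argument (citing Lemma~2.8 of~\cite{GJY}) is weaker and suffices: an Agmon-type bootstrap, using $\beta>2$ and the fact that multiplication by an $H^s(\T)$ function preserves $H^{s-1/2}(\T)$, shows that \emph{if} an eigenvalue exists at $\lambda\notin\Z$, its eigenfunction lies in $\japanese[x]^{-N}H^s(\T;L^2(\R^n))$ for every $N$, and that resonances (non-$L^2$ ``eigenfunctions'') cannot occur away from $\Z$. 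The rapid decay then makes~\ref{C1} and~\ref{C2} automatic at those eigenvalues without excluding them. Your conclusion would stand if you replaced the unsupported ``vanishing in open channels'' claim by this Agmon bootstrap giving superpolynomial decay; as written, the central step for $\lambda\notin\Z$ is an assertion rather than a proof.
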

\begin{proof}
Due to the self-adjointness of $K$, there are no eigenvalues off of the real
axis.  Following the proof of Lemma~2.8 in~\cite{GJY}, resonances can
only exist at $\lambda \in \Z$, and if $\lambda$ is not an integer then the
eigenfunctions additionally satisfy $\phi_\lambda \in \japanese[x]^{-N}
H^s(\T; L^2(\R^n))$.  The main ingredients are an Agmon-type bootstrapping
argument (based on~\cite{Agmon}) and the fact that multiplication by a
function in $H^s(\T)$ preserves the $H^{s-1/2}(\T)$ norm.

When $\lambda \in \Z$, the bootstrapping process produces only as much spatial
decay for $\phi_\lambda$ as is present in the Green's function of the
Laplacian.  In general, the Green's function belongs to $\japanese[x]^\sigma
L^2$ (aside from the local singularity) for all $\sigma > \frac{4-n}{2}$.
For $n \ge 7$, the desired value $\sigma = -1$ is part of this range.
\end{proof}

\begin{corollary}
Let $V(x) \in L^{\frac{n}{2}}(\R^n)$ be a complex valued time-independent 
potential.  The Strichartz estimates in Theorem~\ref{thm:main} are valid 
provided the equation
\begin{equation*}(-\Delta + V - \lambda)\phi = 0
\end{equation*}
has no solutions $\phi \in L^{2n/(n-2)}(\R^n)$ for any $\lambda \in [0,\infty)
\subset \Compl$, and condition~\ref{C3} is satisfied at every eigenvalue.
\end{corollary}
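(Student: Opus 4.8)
The plan is to derive the corollary from Theorem~\ref{thm:main}, applied with the time-independent (hence trivially $2\pi$-periodic) potential $V(t,x):=V(x)$, which belongs to $L^{n/2}_xL^\infty_t$ because $\|V\|_{L^{n/2}_xL^\infty_t}=\|V\|_{L^{n/2}(\R^n)}<\infty$; one then checks the theorem's three hypotheses. The structural fact that makes everything explicit is that, since $V$ does not depend on $t$, the Floquet Hamiltonian $K=i\partial_t-\Lap_x+V(x)$ is diagonalized by the Fourier expansion in $t$: on the harmonic $e^{ikt}$ it acts as $-k+(-\Lap+V)$, so that $(K-z)^{-1}=\bigoplus_{k\in\Z}(-\Lap+V-(z+k))^{-1}$ and $\Spec(K)=\bigcup_{k\in\Z}\bigl(\Spec(-\Lap+V)-k\bigr)$. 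Writing an eigenfunction of $K$ as $e^{-i\lambda t}\sum_k e^{ikt}\psi_k(x)$, the equation $(K-\lambda)e^{-i\lambda t}\phi=0$ is equivalent to $(-\Lap+V)\psi_k=(\lambda+k)\psi_k$ for each $k$; hence a resonance or eigenvalue of $K$ at $\lambda$ is precisely a resonance or eigenvalue of $-\Lap+V$ at one of the translates $\lambda+k$, and $N_\lambda,X_\lambda$ are direct sums over $k$ of the corresponding eigenspaces of $-\Lap+V$. The same applies to $\overline K$, whose potential $\overline V$ is the $L^2$-adjoint of multiplication by $V$, so $-\Lap+\overline V=(-\Lap+V)^*$ and $\tilde N_\lambda,\tilde X_\lambda$ are built from the eigenspaces of $(-\Lap+V)^*$ at the conjugate translates $\overline\lambda+k$.

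First I would translate the resonance hypothesis through this dictionary. For $\mu<0$ the operator $-\Lap-\mu$ is invertible on $L^2$ and the Birman--Schwinger operator $|V|^{1/2}(-\Lap-\mu)^{-1}|V|^{1/2}$ is compact, so $\mu$ can only be a genuine $L^2$-eigenvalue of $-\Lap+V$, never a resonance. On $[0,\infty)$ the assumed absence of solutions $\phi\in L^{2n/(n-2)}$ of $(-\Lap+V-\lambda)\phi=0$ rules out both a resonance of $-\Lap+V$ there --- whose canonical regularity for an $L^{n/2}$ potential is, away from a local singularity, exactly $L^{2n/(n-2)}$, in accordance with the definition postponed to Section~\ref{sec:Inverses} --- and an embedded $L^2$-eigenvalue, since such an eigenfunction solves $(-\Lap-\mu)\psi=-V\psi$ with $V\psi\in L^{2n/(n+4)}$ by H\"older and is consequently placed in $L^{2n/(n-2)}$ by the standard elliptic bootstrap for scaling-critical potentials. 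By the integer-shift correspondence this establishes the hypothesis of Theorem~\ref{thm:main} that $K$ and $\overline K$ carry no resonances along $\R$, and shows that every eigenvalue of $K$ modulo $\Z$ comes from an eigenvalue $\mu$ of $-\Lap+V$ lying in $\Compl\setminus[0,\infty)$.

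It remains to verify conditions~\ref{C1} and~\ref{C2}; condition~\ref{C3} is part of the corollary's hypothesis and, under the identification of $X_\lambda,\tilde X_\lambda$ with the direct sums $\bigoplus_k\ker(-\Lap+V-(\lambda+k))$ and $\bigoplus_k\ker((-\Lap+V)^*-(\overline\lambda+k))$, it reads verbatim. The substance of~\ref{C1}--\ref{C2} here is a decay estimate: for $\mu\in\Compl\setminus[0,\infty)$, every eigenfunction $\psi_k$ of $-\Lap+V$ at $\mu=\lambda+k$ satisfies $\japanese[x]^N\psi_k\in L^2(\R^n)$ for all $N$. This is a Combes--Thomas/Agmon-type argument (cf.~\cite{Agmon}): choosing $\sqrt{-\mu}$ with positive real part, the kernel of $(-\Lap-\mu)^{-1}$ decays like $|x|^{2-n}e^{-c|x|}$, and iterating $\psi_k=-(-\Lap-\mu)^{-1}(V\psi_k)$ while exploiting the smallness of $\|V\|_{L^{n/2}(|x|>R)}$ for large $R$ yields $e^{c'|x|}\psi_k\in L^2$ for some $c'>0$. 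Since the eigenvalues of a complex $L^{n/2}$ potential are bounded in $\Compl$ and thus --- being confined to $\Compl\setminus[0,\infty)$ --- finitely many, an eigenfunction $e^{-i\lambda t}\phi$ of $K$ and its time-zero slice $\Phi=\sum_k\psi_k$ are finite combinations of such rapidly decaying $\psi_k$, so both lie well inside the weighted summands $\japanese[x]^{-1}L^2(\T\times\R^n)$ and $\japanese[x]^{-1}L^2(\R^n)$ of the spaces in~\ref{C1} and~\ref{C2} --- the $L^{2n/(n+2)}(\R^n;L^2(\T))$ and $W^{1,2n/(n+2)}$ summands are not even needed --- and likewise for $\overline K$. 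With all hypotheses of Theorem~\ref{thm:main} verified, its conclusion yields the Strichartz estimate~\eqref{eq:newStrich} for initial data orthogonal to $\tilde X=\oplus_\lambda\tilde X_\lambda$, which here is exactly the span of the eigenfunctions of $-\Lap+\overline V$; this is the statement of the corollary. I expect the exponential-decay bootstrap for complex eigenfunctions at energies off $[0,\infty)$ to be the one genuinely technical point; the remainder is the bookkeeping that converts statements about $K$ into statements about $-\Lap+V$.
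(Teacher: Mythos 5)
Your plan is correct and would work, but it is genuinely different from---and substantially heavier than---the proof in the paper. The paper exploits the single fact that for $\lambda\notin[0,\infty)$ the free resolvent $(-\Delta-\lambda)^{-1}$ is bounded from $L^p(\R^n)$ to itself for every $1\le p\le\infty$, and runs a two-line bootstrap on the bound state $\phi=e^{i\lambda t}\Phi$: from $\Phi\in L^{2n/(n-2)}$ one iteration of $\Phi=-(-\Delta-\lambda)^{-1}V\Phi$ (with $V\Phi\in L^{2n/(n+2)}$ by H\"older) gives $\Phi\in L^{2n/(n+2)}$, and then $\Delta\Phi=V\Phi-\lambda\Phi\in L^{2n/(n+2)}$ yields $\Phi\in W^{1,2n/(n+2)}$. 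That places the eigenfunctions in the \emph{unweighted} summand of conditions~\ref{C1} and~\ref{C2} directly, with no need to establish any spatial decay whatsoever. You instead aim for the \emph{weighted} summand $\japanese[x]^{-1}L^2$ by proving exponential decay through a Combes--Thomas/Agmon argument. That route is legitimate (the conjugated uniform Sobolev inequality gives the smallness one needs against $\|V\|_{L^{n/2}(|x|>R)}$), but it is the genuinely technical step, as you yourself flag, and it proves far more than is required: some care is needed to run the conjugation for a potential that is merely $L^{n/2}$ and complex, whereas the paper's $L^p$ bootstrap is essentially immediate. Your preparatory bookkeeping (Fourier diagonalization of $K$ over harmonics $e^{ikt}$, translating eigenvalues and resonances of $K$ to those of $-\Delta+V$ modulo $\Z$) is also correct but is left implicit in the paper, where the identification $\phi_\lambda=e^{i\lambda t}\Phi(x)$ is simply stated. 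One small extraneous claim: the finiteness of the eigenvalues of $-\Delta+V$ is not a hypothesis you need to verify---it is part of the \emph{conclusion} of Theorem~\ref{thm:main} once~\ref{C1}--\ref{C3} are in place, so invoking Birman--Schwinger bounds on eigenvalue location is unnecessary. In short: correct, but you chose the long way around the one place where the paper takes a shortcut.
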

\begin{proof}
Similar to the preceding corollary, the point is that all of the permitted
bound states $\phi_\lambda = e^{i\lambda t}\Phi(x)$
are necessarily eigenfunctions that decay rapidly enough to
satisfy condition~\ref{C2}.  In this case the bootstrapping is based on the
relation $\Phi = -(I + (-\Delta -\lambda)^{-1}V)\Phi$ 
Since $\lambda \not\in [0,\infty)$, the resolvent of the Laplacian is bounded
from every $L^p(\R^n)$ to itself, $1 \le p \le \infty$.

Starting with $\Phi \in L^{2n/(n-2)}$, one iteration brings the exponent down
to $\Phi \in L^{2n/(n+2)}$.  Furthermore it is quite easy to take two
derivatives:  $\Delta \Phi = V\Phi - \lambda\Phi \in L^{2n/(n+2)}$.
Thus $\phi \in W^{1,2n/(n+2)}$ as is required by~\ref{C2}.
\end{proof}
\begin{corollary} \label{cor:RealTimeInd}
If $V \in L^{\frac{n}{2}}(\R^n)$ is a real-valued potential,
then~\eqref{eq:newStrich}
holds provided the Schr\"odinger operator $H = -\Delta + V$ does not have a
resonance or an eigenvalue at zero energy.
\end{corollary}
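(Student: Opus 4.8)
The plan is to deduce this from the preceding corollary, whose only hypotheses are that condition~\ref{C3} hold at every eigenvalue and that $(-\Delta + V - \lambda)\phi = 0$ admit no nonzero solution $\phi \in L^{2n/(n-2)}(\R^n)$ for $\lambda \in [0,\infty)$; everything else, including the treatment of the (necessarily negative) eigenvalues of $H = -\Delta + V$ and the verification of~\ref{C1}--\ref{C2}, is already carried out there. Since $V$ is real, $K$ is self-adjoint and $\overline{K} = K$, so by the remark following Theorem~\ref{thm:main} one has $\tilde{X}_\lambda = X_\lambda$ at every $\lambda$; the $L^2$-orthogonal projection appearing in~\ref{C3} is then the identity on $X_\lambda$, a bijection, so that hypothesis is automatic. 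It remains only to rule out $L^{2n/(n-2)}$ solutions along $[0,\infty)$.

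For $\lambda = 0$ this is precisely the standing assumption: a zero-energy eigenfunction of $H$ is an $L^2$ --- hence, after the usual interior elliptic regularity, $L^{2n/(n-2)}$ --- solution of $(-\Delta + V)\phi = 0$, a zero-energy resonance is by definition such a solution lying outside $L^2$, and we are assuming neither occurs. For $\lambda > 0$ we use that a real-valued potential in the scaling-critical class $L^{n/2}(\R^n)$ supports neither embedded eigenvalues nor positive-energy resonances. Indeed, if $\phi \in L^{2n/(n-2)}$ solves $(-\Delta + V - \lambda)\phi = 0$, then $V\phi \in L^{2n/(n+2)}$ by H\"older's inequality, so $\phi$ solves $(-\Delta - \lambda)\phi = -V\phi$ with $L^{2n/(n+2)}$ data and is, in the sense of the limiting absorption framework, an outgoing (or incoming) solution built from the boundary values of the free resolvent; for real $V$ a Rellich-type identity forces its far-field pattern to vanish, Rellich's uniqueness lemma then forces $\phi$ to have compact support, and the Jerison--Kenig unique continuation theorem for $L^{n/2}$ potentials forces $\phi \equiv 0$. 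With the absence of $L^{2n/(n-2)}$ solutions established on all of $[0,\infty)$, the preceding corollary applies verbatim and~\eqref{eq:newStrich} follows.

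The genuinely external --- and most delicate --- input is the exclusion of positive-energy solutions when $V$ is merely in $L^{n/2}$, with no pointwise decay at all: Kato's classical argument requires $V(x) = o(|x|^{-1})$ and does not apply. The correct substitute combines the Carleman estimates of Ionescu--Jerison (and Koch--Tataru), which give absence of positive eigenvalues for such rough potentials, with the Agmon--Kato--Kuroda limiting absorption principle of Ionescu--Schlag; the latter asserts exactly that $I + R_0(\lambda \pm i0)V$ is invertible on $L^{2n/(n-2)}(\R^n)$ for every $\lambda > 0$, which is the negation of the existence of the solutions ruled out above. Modulo citing these results, the remaining work is bookkeeping: isolating the features of the real, time-independent case that make condition~\ref{C3} automatic and collapse the resonance hypothesis to the single condition at zero energy.
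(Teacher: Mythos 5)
Your proposal is correct and takes essentially the same route as the paper: reduce to the preceding corollary, observe that reality of $V$ makes condition~\ref{C3} automatic, take care of $\lambda=0$ by the standing hypothesis, and invoke the literature on absence of positive-energy eigenvalues and resonances for real $V \in L^{n/2}$. Two small points worth tidying: the paper's citation for the limiting-absorption input is Goldberg--Schlag~\cite{GS2}, not Ionescu--Schlag (the latter is a different paper; only Ionescu--Jerison~\cite{IonJer} appears in the reference list here); and the intermediate sketch via a Rellich identity, Rellich uniqueness, and compact support is a slight detour, since you immediately and correctly note that this classical chain fails for $V$ merely in $L^{n/2}$ without pointwise decay --- the paper skips straight to the Carleman/LAP machinery, which is cleaner.
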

\begin{proof}
In this case the spectrum of $H$ is purely absolutely continuous on the
interval $(0,\infty)$ due to the combined results of~\cite{GS2} 
and~\cite{IonJer}. According to the previous corollary, the only remaining
spectral point of concern is the behavior of $H$ at $\lambda = 0$. 
The additional assumption ensures that zero is a regular point of the
spectrum as well.
\end{proof}

Although Theorem~\ref{thm:main} is presented as a perturbation
of the Strichartz inequality~\eqref{eq:Strichartz}, which in turn is based
on dispersive estimates for the free Schr\"odinger evolution,
we do not attempt to prove comparable dispersive estimates for $H$.  This is
partly a matter of convenience, as the study of time-asymptotics for
Floquet operators (as in~\cite{GJY}) presents its own set of technical
challenges.  More importantly, the conditions for Theorem~\ref{thm:main} 
include numerous potentials for which the corresponding dispersive
estimate are known to fail.

The discrepancy is especially apparent in dimensions $n\ge 4$.  No pointwise
or $L^p$ condition on the potential is sufficient by itself to imply an
$L^1 \to L^\infty$ dispersive bound~\cite{GV}.  Either some extra regularity
of $V$ is needed, as in~\cite{JSS}, or one must expect to suffer a loss
of derivatives in the solution~\cite{Vodev}.  On the other hand, Strichartz
estimates were proven in~\cite{RS} for time-independent potentials satisfying
$|V(x)| \les \japanese[x]^{-2-\eps}$.  In this work the authors used Kato
smoothing estimates as the intermediary step in place of the nonexistent
dispersive bounds.  Corollary~\ref{cor:RealTimeInd} represents a modest
extension of this work.

We wish to emphasize one additional feature of Theorem~\ref{thm:main} that
appears to be unique in the literature: the treatment of eigenvalues
depends only on the nature of the associated eigenfunction, not on its
location relative to the spectrum of $K$.  While it may be true in certain
applications that threshold eigenvalues and/or resonances enjoy distinct
properties from those embedded in the continuous spectrum or from isolated
points, the criteria~\ref{C1}-\ref{C3} apply equally in all these
cases.

\smallskip
The proof of Theorem~\ref{thm:main} is based on a direct application of
Duhamel's formula.  We consider
the behavior of solutions when $t\ge 0$; the reasoning for $t \le 0$ is 
identical.  Let $\Uplus$
denote the forward propagator of the free Schr\"odinger equation, that is
\begin{equation*}
\Uplus g(t,x) := \int_{s<t} e^{-i(t-s)\Delta}g(s,x)\,ds.
\end{equation*}
We will also allow $\Uplus$ to act on functions of $x$ alone by the definition
$\Uplus g(t,x) := \chi_{[0,\infty)}(t)e^{-it\Delta}g(x)$.  The adjoint of 
$\Uplus$ in both cases is the backward propagator $\Uminus$. The full range of 
mapping properties of $\Uplus$ are established in~\cite{KT}; of particular
concern are the bounds
\begin{equation} \label{eq:KeelTao}
\begin{alignedat}{3}
\Uplus &:\, &L^2_t &L^{2n/(n+2)}_x && \to L^2_t L^{2n/(n-2)}_x
               \cap C(\R; L^2_x) \\
\Uplus &:\, &&L^1_t L^2_x&& \to L^2_t L^{2n/(n-2)}_x
               \cap C(\R; L^2_x) \\
\Uplus &:\, &&L^2_x &&\to L^2_t L^{2n/(n-2)}_x \cap C([0,\infty); L^2_x)
\end{alignedat}
\end{equation}

Every weak solution of \eqref{eq:Schr} on the time interval $[0,\infty)$
must solve the functional equation $u(t,x) = \Uplus f(t,x) +i \Uplus Vu(t,x)$.
This leads to the formal solution
\begin{equation*}
u = (I - i\Uplus V)^{-1}\Uplus f
\end{equation*}
where the inverse is taken among bounded operators on 
$L^2_t L^{2n/(n-2)}_x$.  In order to work in the setting of $L^2_tL^2_x$,
factorize $V = ZW$, with $Z, W \in L^\infty_t L^n_x$ and write
\begin{equation} \label{eq:Duhamel}
u = \Uplus f\  +\ i\Uplus Z(I -iW\Uplus Z)^{-1}W\Uplus f.
\end{equation}
In the event that $I - iW\Uplus Z$ is invertible as an operator on 
$L^2([0,\infty); L^2(\R^n))$, one concludes that~\eqref{eq:newStrich} holds
for all $f \in L^2$
which implies an absence of bound states.  This occurs for all
$V \in L^\infty_t L^{n/2}_x$ of sufficiently small norm.
In every other case, the challenge is to find a condition on $f$ so that 
$W\Uplus f$ belongs to the domain of the unbounded operator
$(I - iW\Uplus Z)^{-1}$.

Much of our analysis is done with respect to the Fourier transform of the
time variable, in deference to the fact that $\Uplus$ and $V$ preserve
the space of functions satisfying $g(t+2\pi,x) = e^{2\pi i\lambda}g(t,x)$
for each $\lambda \in [0,1]$.  We show that $I - iW\Uplus Z$ is a compact 
perturbation of the identity on each of these spaces.  The Fredholm
Alternative then equates invertibility with the absence of eigenvalues or
resonances at $\lambda$.

Common sense suggests that the singularities caused by a particular bound 
state $\phi$ can be avoided by requiring the initial data $f$ to be
orthogonal to $\Phi$.  Even in the time-independent case, however, 
eigenvalues at zero energy are known to disturb dispersive estimates
after such a projection.  This phenomenon is first identified 
in~\cite{JK} and described in more detail in~\cite{ES}.
A full asymptotic expansion for Floquet solutions has recently been computed
in three dimensions in~\cite{GJY}.  We note that the intuitive suggestion
above is also incorrect when the Schr\"odinger propagation is not unitary
(i.e. when $K$ has complex values).  The projection employed in
Theorem~\ref{thm:main} is actually orthogonal to a function 
$\tilde{\Phi} \in \tilde{X}$ rather than $\Phi$.  

In order to determine the success of a projection, we closely 
examine the behavior of $(I - iW\Uplus Z)^{-1}$ for all $\lambda$ in the
neighborhood of an eigenvalue and assess whether it is compatible with the
input $W\Uplus f$.  The resulting eigenvalue condition appears in the form of
a discrete-time Kato smoothing bound.  This last computation, parts of which
are adapted from~\cite{KRS} and~\cite{Simon}, may be of independent interest.

\section{Resolvents, Compactness, and Continuity}

In this section we aim to find spaces on which $I - iW\Uplus Z$ is a compact
perturbation of the identity.  
For each $\lambda \in \Compl$,
define
\begin{equation*}
Y_\lambda = \big\{g \in L^{2,loc}_t L^2_x : g(t+2\pi,x) = 
e^{2\pi i\lambda}g(t,x)\big\}.
\end{equation*}
Functions $g \in Y_\lambda$ are naturally associated with the periodic 
$e^{-it\lambda}g \in L^2(\T\times\R^n)$, and we use this identification
to define a Hilbert space norm on $Y_\lambda$.

For each $\lambda \in \R/\Z$, there exists a ``projection'' $P_\lambda$
from $L^2_t L^2_x$ onto $Y_\lambda$ given by
\begin{equation*}
P_\lambda g(t,x) = \sum_{m\in\Z} e^{-2\pi i\lambda m}g(t+2\pi m,x)
\end{equation*}
The family of operators $P_\lambda$ can be understood as a partial Fourier
transform in the time variable, acting on the space $L^2_t L^2_x \cong
\ell^2_m (L^2([2\pi m, 2\pi (m+1)]; L^2(\R^n)))$.  For example the Plancherel
identity is expressed as
\begin{equation} \label{eq:Plancherel}
\int_0^1 \norm[P_\lambda g][Y_\lambda]^2\,d\lambda = \norm[g][L^2_t L^2_x]^2
\end{equation}
For functions $g$ with support in the halfline $t \in [0,\infty)$, 
the definition of $P_{\lambda}g$ extends to the strip
$\lambda = \lambda' + i\mu$, $\mu \le 0$, $\lambda' \in \R/\Z$ with the value 
$e^{-\mu t}P_{\lambda'}(e^{\mu t}g)$.  
The Plancherel identity in this case becomes
\begin{equation*}
\int_0^1 \norm[P_{\lambda'+i\mu} g][Y_{\lambda'+i\mu}]^2\, d\lambda' =
\int_0^1 \norm[P_{\lambda'} e^{\mu t}g][Y_{\lambda'}]^2\,d\lambda' =
\norm[e^{\mu t}g][L^2_t L^2_x]^2
\end{equation*}

On the Fourier side with respect to time, $P_\lambda$ has a very clear
interpretation.  Let $\hat{g}(\tau,x)$ be the partial Fourier transform of
$g$.  Then $(P_\lambda g)\hat{\,}$ is the restriction of $\hat{g}$ to the
planes $\{\tau \in \lambda + \Z\}$.  If $g$ is supported on $\{t \ge 0\}$
then $\hat{g}$ has an analytic extension to the lower halfplane, making the
restrictions to $\{\tau \in \lambda' + i\mu+ \Z\}$ well-defined.
Clearly $P_\lambda$ commutes with pointwise multiplication (in $(t,x)$) by
any $2\pi$-periodic function.

The action of $\Uplus$ in this setting is also easy to characterize.
Since $\Uplus$ convolves functions in the time variable with the integral
kernel $K(t) = \lim_{\eps\downarrow 0} e^{-it\Delta -\eps t}\chi_{t\ge 0}$,
on the Fourier side it performs pointwise (in $\tau$) ``multiplication''
by $\hat{K}(\tau) = \lim_{\eps\downarrow 0} i(-\Delta - (\tau-i\eps))^{-1}$.
Using the notation of resolvents,
\begin{equation} \label{eq:UplusRminus}
(\Uplus g)\hat{\,}(\tau,x) = i\Rminus(\tau)\hat{g}(\tau,x)
\end{equation}
where $\Rminus(\tau)$ represents the branch of the resolvent of $-\Delta$
which continues analytically to $\{\Im[\tau] \le 0\}$.  
This shows that $\Uplus$ also commutes with each of the projections
$P_\lambda$.  Once again, if $\supp_t g \subset [0,\infty)$, 
the identity~\eqref{eq:UplusRminus} remains valid for all $\tau$ in the lower
halfplane, with the understanding that
\begin{equation*}
\hat{g}(\tau,x) = (e^{\Im[\tau]}g)\hat{\,}(\Re[\tau],x).
\end{equation*}
Therefore the operator
$I-i W\Uplus Z$ admits a restriction to each $Y_\lambda$, 
$\Im[\lambda] < 0$, and most importantly,
\begin{equation} \label{eq:goal}
\norm[e^{\mu t}(I - iW\Uplus Z)^{-1}W\Uplus f][L^2_tL^2_x]^2 = 
\int_0^1 
\norm[(I - iW\Uplus Z)^{-1}P_{\lambda'+i\mu} W\Uplus f][Y_{\lambda'+i\mu}]^2
  \, d\lambda'
\end{equation}
The proof of Theorem~\ref{thm:main} will be complete once we bound this
quantity in terms of the $L^2(\R^n)$ norm of $f$, uniformly over $\mu \le 0$.

The particular factorization we choose for $V(t,x)$ is to let
$W(t,x) = w(x) = (\norm[V(\,\cdot\,, x)][\infty])^{\frac12}$.  By our
assumptions, $w \in L^n(\R^n)$.  The remaining factor can be decomposed as
$w(x)z(t,x)$, with $w$ the same function as above and $z(t,x)$ periodic and
bounded almost everywhere by 1.  Multiplication by $z$ is a bounded operator
of unit norm on $Y_\lambda$, so compactness of the operator $w\Uplus wz$
follows directly from compactness of $w\Uplus w$.
 
\begin{proposition} \label{prop:resolvent}
Given any function $w \in L^n$, the collection 
$\{w\Rminus(\tau)w: \Im[\tau] \le 0\}$ forms a uniformly
continuous family of compact operators on $L^2(\R^n)$ with norm decreasing
to zero as $|\tau| \to \infty$.
\end{proposition}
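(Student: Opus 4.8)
The plan is to establish the three assertions — compactness of each $w\Rminus(\tau)w$, joint operator-norm continuity in $\tau$, and decay of the norm as $|\tau|\to\infty$ — first for bounded, compactly supported weights, and then to pass to an arbitrary $w\in L^n(\R^n)$ by a density argument that is made uniform in $\tau$ by the Kenig--Ruiz--Sogge uniform Sobolev estimate \cite{KRS}. That estimate asserts, for $n\ge3$, that $\norm[\Rminus(\tau)][L^{2n/(n+2)}\to L^{2n/(n-2)}]\les1$ for \emph{all} $\tau$ with $\Im[\tau]\le0$ — including $\tau$ real and nonnegative, where $\Rminus(\tau)$ denotes the boundary value of the branch continued from $\Im[\tau]<0$. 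Since $w\in L^n$ makes multiplication by $w$ bounded from $L^{2n/(n-2)}$ into $L^2$ and from $L^2$ into $L^{2n/(n+2)}$ (H\"older), composing the three maps yields
\begin{equation*}
\norm[w_1\Rminus(\tau)w_2][L^2\to L^2]\ \les\ \norm[w_1][L^n]\,\norm[w_2][L^n],\qquad\Im[\tau]\le0,
\end{equation*}
for any $w_1,w_2\in L^n$; in particular the family $\{w\Rminus(\tau)w\}$ is already uniformly bounded on $L^2$.

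Next I would carry out the density reduction. Pick $w_k\in L^\infty(\R^n)$ of compact support with $\norm[w-w_k][L^n]\to0$, and decompose $w\Rminus(\tau)w-w_k\Rminus(\tau)w_k=(w-w_k)\Rminus(\tau)w+w_k\Rminus(\tau)(w-w_k)$. The bilinear bound above then gives
\begin{equation*}
\sup_{\Im[\tau]\le0}\norm[w\Rminus(\tau)w-w_k\Rminus(\tau)w_k][L^2\to L^2]\ \les\ \norm[w-w_k][L^n]\big(\norm[w][L^n]+\norm[w_k][L^n]\big)\ \longrightarrow\ 0,
\end{equation*}
so $w\Rminus(\tau)w$ is an operator-norm limit of $w_k\Rminus(\tau)w_k$, \emph{uniformly in} $\tau$. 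Consequently each of the three properties will pass from the truncated family to $\{w\Rminus(\tau)w\}$: a norm limit of compact operators is compact; a uniform limit of norm-continuous $\mathcal{B}(L^2)$-valued functions is norm-continuous; and the decay passes by first fixing $k$ so that the approximation error is $<\eps$ for all $\tau$, then taking $|\tau|$ large so that $\norm[w_k\Rminus(\tau)w_k][L^2\to L^2]<\eps$. A norm-continuous $\mathcal{B}(L^2)$-valued function on $\{\Im[\tau]\le0\}$ that vanishes at infinity extends continuously to the one-point compactification, hence is uniformly continuous, which is exactly the assertion of the proposition.

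It then remains to treat a fixed bounded $v$ of compact support. Multiplication by $v$ is a compact operator from $H^2(\R^n)$ to $L^2(\R^n)$, since it factors through the Rellich embedding $H^2(K)\einbet L^2(K)$ for any ball $K\supset\supp v$. When $\Im[\tau]<0$ the free resolvent $\Rminus(\tau)=(-\Lap-\tau)^{-1}$ is bounded from $L^2$ to $H^2$, so $v\Rminus(\tau)v$ is compact there. To push this to the boundary $[0,\infty)$ and to obtain continuity and decay, fix $s$ so large that $|v(x)|\les\japanese[x]^{-s}$ and factor
\begin{equation*}
v\Rminus(\tau)v=\big(v\japanese[x]^s\big)\big(\japanese[x]^{-s}\Rminus(\tau)\japanese[x]^{-s}\big)\big(\japanese[x]^s v\big),
\end{equation*}
with the outer factors bounded operators on $L^2$. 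The limiting absorption principle for $-\Lap$ on $\R^n$, $n\ge3$ (in the form due to Agmon \cite{Agmon}), says precisely that $\tau\mapsto\japanese[x]^{-s}\Rminus(\tau)\japanese[x]^{-s}$ is an operator-norm continuous $\mathcal{B}(L^2)$-valued map on $\{\Im[\tau]\le0\}$ whose norm is $O(|\tau|^{-1/2})$ as $|\tau|\to\infty$. Norm-continuity extends compactness of $v\Rminus(\tau)v$ from $\{\Im[\tau]<0\}$ to the closed lower half-plane, and it simultaneously supplies the joint norm-continuity in $\tau$ and the decay at infinity, which by the previous paragraph completes the proof.

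I expect the main obstacle to be controlling $\Rminus(\tau)$ as $\tau$ nears the spectrum $[0,\infty)$ of $-\Lap$, where it ceases to be bounded on $L^2$: neither the density reduction nor the analysis of the truncated operators may rely on the crude identity $\norm[\Rminus(\tau)][L^2\to L^2]=1/\mathrm{dist}(\tau,[0,\infty))$, and the two substitutes — the uniform Sobolev estimate of \cite{KRS}, valid uniformly down to and along the half-line (so that the $L^n$-density reduction is legitimate), and the limiting absorption principle, which governs the behaviour near the half-line and at high energy — are exactly the ingredients the introduction flags as adapted from that reference. A minor point is the low-dimensional subtlety of the limiting absorption principle at $\tau=0$, but it is harmless here because the compact support of $v$ permits an arbitrarily large weight exponent $s$.
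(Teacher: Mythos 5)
Your density reduction — approximating $w$ in $L^n$ by bounded, compactly supported truncations and making the approximation uniform in $\tau$ via the KRS uniform Sobolev bound $L^{2n/(n+2)}\to L^{2n/(n-2)}$ — is exactly what the paper does, and your observation that norm continuity together with vanishing at infinity gives uniform continuity is also the paper's closing remark. Where you genuinely diverge is in the treatment of the truncated operators $v\Rminus(\tau)v$. The paper stays with the unweighted resolvent kernel: compactness comes from writing $(-\Delta+1)\Rminus(\tau)v g=vg+(\tau+1)\Rminus(\tau)vg\in L^2$, using the Rellich embedding $H^2\hookrightarrow L^{2n/(n-2)}$ on a large ball, and controlling the exterior by a pointwise $|x|^{(1-n)/2}$ decay of $\Rminus(\tau)vg$; norm continuity comes from mean-value bounds on the Hankel-function representation $|x-y|^{2-n}F(\tau^{1/2}|x-y|)$ of the kernel, closed up by a Schur test; and decay as $|\tau|\to\infty$ comes from a second KRS estimate, $|\tau|^{1/(n+1)}\Rminus(\tau):L^{(2n+2)/(n+3)}\to L^{(2n+2)/(n-1)}$. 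You instead route all three properties through the weighted-$L^2$ limiting absorption principle, factoring $v\Rminus(\tau)v$ through $\japanese[x]^{-s}\Rminus(\tau)\japanese[x]^{-s}$, getting interior compactness from $L^2\to H^2$ boundedness plus Rellich, and then invoking norm continuity of the weighted resolvent up to the boundary together with its $O(|\tau|^{-1/2})$ high-energy decay. Both routes work. Yours is more economical, since a single external theorem delivers continuity, decay, and the extension of compactness to the boundary simultaneously; but it leans on more machinery, and in particular the norm-continuity at $\tau=0$ (for $n=3,4$) is really Jensen--Kato \cite{JK} and Jensen \cite{Jensen1,Jensen2} rather than Agmon, even though, as you correctly note, the compact support of $v$ lets you take $s$ as large as you like so the threshold issue evaporates. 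The paper's kernel-and-Schur analysis is longer but is self-contained given the KRS estimates it already uses and never needs to invoke the LAP at or near threshold.
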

\begin{proof}
This is a compilation of well-known resolvent estimates, primarily the fact
(proved in~\cite{KRS}) that $\Rminus(\tau)$ are uniformly bounded as operators
from $L^{\frac{2n}{n+2}}$ to $L^{\frac{2n}{n-2}}$.  All of the desired
properties -- compactness, continuity, and norm decay -- are preserved if
$w$ is approximated in $L^n$ by a sequence of bounded compactly supported
functions $w^{\eps}$.

For compactness, observe that $(-\Delta + 1)\Rminus(\tau)w^{\eps}g
= w^{\eps}g + (\tau+1) \Rminus(\tau)w^{\eps}g \in L^2(\R^n)$.  Within any
ball of finite radius $R$, the Sobolev space $H^2$ embeds compactly inside
$L^{\frac{2n}{n-2}}$.  If this ball is much larger than the support of 
$w^{\eps}$, then there is a pointwise bound 
\begin{equation*}
|\Rminus(\tau)w^{\eps}g(x)|
\les |\tau|^{\frac{n-3}{4}} \norm[g][2] \norm[w^{\eps}][2] |x|^{\frac{1-n}{2}}
\end{equation*}
outside of the ball.  Allowing $R \to \infty$ expresses 
$w^{\eps}\Rminus(\tau)w^{\eps}$ as a norm-limit of compact operators on $L^2$.

For continuity, recall that the integration kernel of $\Rminus(\tau)$ is
$|x-y|^{2-n}F(\tau^{\frac12}|x|)$, where $F$ can be expressed explicitly in
terms of Hankel functions.  In dimensions $n \ge 3$ it satisfies the 
pointwise bounds
\begin{equation*}
|F(z)|, |F'(z)| \les \japanese[z]^{(n-3)/2}.
\end{equation*}
Using the mean value theorem, if $|\tau-\sigma| < \frac12|\tau|$ then
\[
|\Rminus(\tau)-\Rminus(\sigma)(x,y)| \les 
\begin{cases} |\tau^{\frac12}-\sigma^{\frac12}|\,|x-y|^{3-n},\ 
    & {\rm if}\ |x-y| < |\tau|^{-\frac12} \\
 |\tau|^{\frac{n-3}{4}} |\tau^{\frac12}-\sigma^{\frac12}|
  |x-y|^{\frac{3-n}2}, \ &{\rm if}\ 
|\tau|^{-\frac12} < |x-y| < |\tau^{\frac12}-\sigma^{\frac12}|^{-1}
\end{cases}
\]
The case where $|x-y|$ is large is unimportant because $w^{\eps}$ has compact 
support.  The Schur test then shows that $w^{\eps}\Rminus(\tau)w^{\eps}$
is continuous with respect to $\tau$.

Finally, decay as $|\tau| \to \infty$ is an immediate consequence of another
resolvent bound from~\cite{KRS}, namely that 
$|\tau|^{\frac{1}{n+1}}\Rminus(\tau)$ is a uniformly bounded family of maps
from $L^{\frac{2n+2}{n+3}}$ to $L^{\frac{2n+2}{n-1}}$.  The combination of
continuity and decay at infinity immediately implies uniform continuity.
\end{proof}

\begin{corollary} \label{cor:compact}
Given any $w \in L^n(\R^n)$, the collection 
$\{ e^{-i\lambda t} w \Uplus w e^{i\lambda t}: \Im[\lambda] \le 0\}$ 
is a continuous family (with respect to $\lambda$)
of compact operators on $L^2(\T\times \R^n)$, with norm decreasing to
zero as $\Im[\lambda] \to -\infty$.

The same is also true for the family of operators
$e^{-i\lambda t} w \Uplus wz e^{i\lambda t}$ for any bounded
$2\pi$-periodic function $z$.
\end{corollary}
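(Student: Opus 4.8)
The plan is to deduce all three properties from Proposition~\ref{prop:resolvent} by a fibered, direct-sum argument. First I would identify $e^{-i\lambda t}w\Uplus we^{i\lambda t}$, acting on $L^2(\T\times\R^n)$, with the block-diagonal operator $\bigoplus_{m\in\Z} i\,w\Rminus(\lambda+m)w$ acting on $\ell^2(\Z;L^2(\R^n))$, and then carry compactness, $\lambda$-continuity, and norm decay at infinity from the individual blocks over to the direct sum.

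To set up the identification, expand $h\in L^2(\T\times\R^n)$ in its time-Fourier series $h=\sum_{m\in\Z}c_m(x)e^{imt}$, which gives a unitary identification $L^2(\T\times\R^n)\cong\ell^2(\Z;L^2(\R^n))$, $h\mapsto(c_m)_m$. When $\Im[\lambda]\le 0$, an element $g\in Y_\lambda$ corresponds under $g\mapsto e^{-i\lambda t}g$ to such an $h$, so $g=\sum_m c_m(x)e^{i(\lambda+m)t}$ is a superposition of the modes $e^{i\tau t}$ with $\tau\in\lambda+\Z$, all lying in the closed lower half-plane. By~\eqref{eq:UplusRminus} and the preceding description of how $\Uplus$ acts on the fibers of $P_\lambda$, $\Uplus$ sends the mode $c_m(x)e^{i(\lambda+m)t}$ to $i\Rminus(\lambda+m)c_m(x)\,e^{i(\lambda+m)t}$. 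Conjugating by $e^{\pm i\lambda t}$ and composing on both sides with multiplication by $w$ then exhibits $e^{-i\lambda t}w\Uplus we^{i\lambda t}$, in the coordinates $(c_m)_m$, as the block-diagonal map $(c_m)_m\mapsto\big(i\,w\Rminus(\lambda+m)w\,c_m\big)_m$.

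Having done this, the three properties follow by reading Proposition~\ref{prop:resolvent} off fiber by fiber. Set $\delta(M):=\sup\{\|w\Rminus(\tau)w\|_{L^2\to L^2}:\Im[\tau]\le 0,\ |\tau|\ge M\}$, so $\delta(M)\to 0$ as $M\to\infty$, and recall each $w\Rminus(\tau)w$ is compact. A block-diagonal operator with compact blocks whose norms tend to zero is the operator-norm limit of its finite truncations, and each truncation is a finite direct sum of compact operators, hence compact; this gives compactness of $e^{-i\lambda t}w\Uplus we^{i\lambda t}$. Its operator norm equals $\sup_m\|w\Rminus(\lambda+m)w\|$, and since $|\lambda+m|\ge|\Im[\lambda]|$ for every $m$, this is at most $\delta(|\Im[\lambda]|)$, which tends to $0$ as $\Im[\lambda]\to-\infty$. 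For continuity I would fix $\lambda_0$ with $\Im[\lambda_0]\le 0$ and $\eps>0$, pick $M$ with $\delta(M)<\eps/2$ so that, uniformly for $\lambda$ in a bounded neighborhood of $\lambda_0$, every block with $|m|$ large differs from its $\lambda_0$-counterpart by less than $\eps$ in operator norm; the remaining finitely many blocks are continuous in $\lambda$ by Proposition~\ref{prop:resolvent}. Since the norm of the (block-diagonal) difference is the supremum of the block-wise differences, this bounds $\|e^{-i\lambda t}w\Uplus we^{i\lambda t}-e^{-i\lambda_0 t}w\Uplus we^{i\lambda_0 t}\|$ by $\eps$ once $\lambda$ is close enough to $\lambda_0$.

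For the weighted version I would simply use $w\Uplus(wz)g=(w\Uplus w)(zg)$, so that $e^{-i\lambda t}w\Uplus wze^{i\lambda t}=\big(e^{-i\lambda t}w\Uplus we^{i\lambda t}\big)\circ M_z$, where $M_z$ denotes multiplication by the $2\pi$-periodic function $z$; on $L^2(\T\times\R^n)$ this $M_z$ is bounded with norm at most $\|z\|_{L^\infty}$ and is independent of $\lambda$, and composing a compact operator with a fixed bounded one preserves compactness, continuity in $\lambda$, and decay as $\Im[\lambda]\to-\infty$. I do not expect any serious obstacle here, since all of the analytic work lies in Proposition~\ref{prop:resolvent}; the one point requiring a little care is that the decay $\delta(M)\to 0$ be uniform over the fibers $\tau\in\lambda+\Z$, which is exactly what lets continuity of the family survive the passage to the infinite direct sum.
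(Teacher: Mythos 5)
Your proof is correct and follows essentially the same route as the paper: both decompose $L^2(\T\times\R^n)$ into its Fourier modes, identify $e^{-i\lambda t}w\Uplus we^{i\lambda t}$ as the block-diagonal operator with blocks $iw\Rminus(\lambda+m)w$, and then read compactness, continuity, and decay fiber-by-fiber from Proposition~\ref{prop:resolvent}. The only minor stylistic difference is that you re-derive the needed uniformity over blocks from decay plus pointwise continuity, whereas the paper simply invokes the uniform continuity already asserted in the proposition.
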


\begin{proof}
For every choice of $\lambda$ in the lower halfplane, 
the Fourier series coefficients of
$e^{-i\lambda t}w \Uplus w e^{+i\lambda t}g$ are precisely 
$\{w\Rminus(\lambda+k)w\hat{g}(k,x): k \in \Z\}$.  At each $k$
this is a compact operator on $\R^n$, and the norms decrease as 
$|k| \to \infty$. It follows that their collective action on 
$\ell^2(k; L^2(\R^n))$ is a compact operator with norm
$\sup_k \norm[w\Rminus(\lambda+k)w][]$.  As $\Im[\lambda] \to -\infty$,
the norm is bounded by 
\begin{equation*}
\sup_{|\tau| > |\Im[\lambda]|} \norm[w\Rminus(\tau)w][]
\end{equation*} 
which decreases to zero.

Given two numbers $\lambda_1$ and $\lambda_2$, the norm
difference of their associated operators is
\[\sup_k \norm[w(\Rminus(\lambda_1+k)-\Rminus(\lambda_2+k))w][].\]
The uniform continuity assertion in Proposition~\ref{prop:resolvent}
takes this to zero in the limit $\lambda_2 \to \lambda_1$.

Neither the compactness nor continuity properties of $e^{-i\lambda t}w\Uplus w
e^{i\lambda t}$ are affected by composition
with the bounded operator $e^{-i\lambda t}ze^{i\lambda t}$.
\end{proof}

\section{Estimates for Inverse Operators} \label{sec:Inverses}

%Now the Fredholm Alternative can be invoked to study the inverse of
%$I - iw\Uplus wZ$.

There are two main elements in the expression~\eqref{eq:goal}, an inverse
operator $(I -iw\Uplus wz)^{-1}$ and a series of functions 
$P_\lambda w\Uplus f \in Y_\lambda$.  In this section we prove uniform
bounds for $(I - iw\Uplus wz)^{-1}$ on $Y_\lambda$ where possible,
and describe the singularities that occur as $\lambda$ approaches
the spectrum of $K$.

The spaces $Y_\lambda$ are a natural setting for working with 
bound states, especially those bound states that grow exponentially over time.
When we wish to vary $\lambda$ as a parameter, however, a unified approach
based on $L^2(\T\times\R^n)$ is preferred.  Define the family of operators
\begin{equation*}
T(\lambda) = I -ie^{-i\lambda t}w\Uplus wze^{i\lambda t} 
 = I -iw(e^{-i\lambda t}\Uplus e^{i\lambda t})wz
\end{equation*}
acting on $L^2(\T\times\R^n)$.  The kernel of $T(\lambda)$ provides valuable
information about the spectrum of $K$, thanks to the intertwining relations
\begin{align*}
 (K-\lambda)\big(e^{-i\lambda t}\Uplus e^{i\lambda t}wz\big)
 &= i wz T(\lambda), \\
 \big(we^{-i\lambda t}\Uplus e^{i\lambda t}\big)(K-\lambda)
 &= i T(\lambda) w.
\end{align*}
Each element
$g \in \ker T(\lambda)$ corresponds to a bound state 
$\phi = \Uplus wz e^{i\lambda t}g$.  
%One can only guarantee {\it a priori}
%that $e^{-i\lambda t}\phi \in L^2_t L^{2n}{n-2}_x$, whereas a true
%eigenfunction of $K$ should belong to $L^2(\T\times\R^n)$
Proposition~\ref{prop:Uplus} below shows that $e^{-i\lambda t}\phi$ is a true
eigenfunction of $K$ in $L^2(\T\times\R^n)$ if $\Im[\lambda] < 0$.
Additional tools are available (\cite{GJY},~\cite{Y1}) if $V$ is real-valued
and $\lambda \not\in\Z$.  In any of the remaining cases it is possible that
the spatial decay of $\phi$ fails to be square-integrable.  We say that
$K$ has a {\em resonance} at $\lambda$ when this occurs; that is, when
there exists some $g \in \ker T(\lambda)$ for which 
$\phi = \Uplus wz e^{i\lambda t}g$ does not belong to $L^2(\T\times\R^n)$.

%Conversely, suppose $(K-\lambda)e^{-i\lambda t}\phi = 0$ for some function
%$e^{-i\lambda t}\phi \in L^2(\T\times\R^n)$.  This gives rise to a formal
%solution of $T(\lambda)g = 0$, namely $g = ie^{-i\lambda t}w\phi$.  
%If $\phi$ possesses sufficient spatial regularity so that
%$e^{-i\lambda t}\phi \in L^2(\T; L^\frac{2n}{n-2}(\R^n))$, this suffices to
%place $g \in L^2(\T\times\R^n)$.

Note that $T(\lambda+1)$ is a unitary
conjugate of $T(\lambda)$, so one only needs to check the invertibility of
$T(\lambda)$ inside the strip 
\begin{equation*}
\Omega^- = 
\{\lambda \in \Compl: \Re[\lambda] \in [0,1),\, \Im[\lambda] \le 0 \}.  
\end{equation*}
The set $\Omega^- \subset \Compl$ is a fundamental domain for the lower
halfplane modulo the integers, and will always be given the quotient topology.
We make some remarks about the size and
differentiability properties of $e^{-\lambda t}\Uplus e^{i\lambda}$ for
future reference.

\begin{proposition} \label{prop:Uplus}
For each $\lambda$ with $\Im[\lambda] < 0$, the operator 
$e^{-i\lambda t}\Uplus e^{i\lambda t}$ is subject to the following estimates.
\begin{align} \label{eq:UplusL2}
\norm[e^{-i\lambda t}\Uplus e^{i\lambda t}g][L^2(\T\times\R^n)]
  &\les |\Im[\lambda]|^{-1}\norm[g][L^2(\T\times\R^n)] \\
\norm[e^{-i\lambda t}\Uplus e^{i\lambda t}g][L^2(\T\times\R^n)]
  &\les |\Im[\lambda]|^{-\frac12}\norm[g][L^2(\T; L^{2n/(n+2)}(\R^n))]
 \label{eq:UplusLp}
\end{align}
Given two values $\lambda_1, \lambda_2$, the difference can be expressed as
\begin{equation} \label{eq:Uplusdiff}
e^{-i\lambda_1 t}\Uplus e^{i\lambda_1 t} - 
 e^{-i\lambda_2 t}\Uplus e^{i\lambda_2 t} = -i(\lambda_1 - \lambda_2)\big(
 e^{-i\lambda_1 t}\Uplus e^{i\lambda_1 t}\big)\big(e^{-i\lambda_2 t}\Uplus
 e^{i\lambda_2 t}\big).
\end{equation}
Therefore the family of operators $e^{-i\lambda t}\Uplus e^{i\lambda t}$
possesses the holomorphic derivative
\begin{equation} \label{eq:Uplusderiv}
\frac{d}{d\lambda}\big[e^{-i\lambda t}\Uplus e^{i\lambda t}\big]
 = -ie^{-i\lambda t}(\Uplus)^2 e^{i\lambda t}.
\end{equation}
over the domain $\Im[\lambda] < 0$.
\end{proposition}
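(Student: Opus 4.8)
The plan is to reduce all four assertions to the observation that, after unfolding the $2\pi$-periodicity in $t$, the operator $e^{-i\lambda t}\Uplus e^{i\lambda t}$ is convolution in the time variable against the operator-valued kernel $\mathcal{K}_\lambda(r)=e^{-i\lambda r}e^{-ir\Delta}\chi_{r\ge 0}$, combined with the inhomogeneous Strichartz bounds for $\Uplus$ already recorded in~\eqref{eq:KeelTao}. Concretely, from $e^{-i\lambda t}\Uplus e^{i\lambda t}g(t,x)=\int_0^\infty e^{-ir\Delta}e^{-i\lambda r}g(t-r,x)\,dr$ and the periodicity of $g$ one sees that this is convolution on $\T$ against $\sum_{m\ge0}\mathcal{K}_\lambda(\,\cdot+2\pi m)$, whose $L^1(\T;\mathcal{B}(L^2_x))$ norm is at most $\int_0^\infty\|\mathcal{K}_\lambda(r)\|_{\mathcal{B}(L^2_x)}\,dr=\int_0^\infty e^{r\,\Im[\lambda]}\,dr=|\Im[\lambda]|^{-1}$; Minkowski's inequality on $\T$ then yields~\eqref{eq:UplusL2} with a constant uniform in $\Im[\lambda]<0$. (Equivalently, expand $g$ in a Fourier series in $t$ and use that the $k$-th coefficient of $e^{-i\lambda t}\Uplus e^{i\lambda t}g$ is $i\Rminus(\lambda+k)\hat g(k,\cdot)$, as in Section~2, together with $\|\Rminus(\tau)\|_{L^2\to L^2}=\operatorname{dist}(\tau,[0,\infty))^{-1}\le|\Im[\tau]|^{-1}$ and Parseval.)

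The single point needing care is~\eqref{eq:UplusLp}: the crude convolution estimate would cost a full power $|\Im[\lambda]|^{-1}$, so I would instead keep the decaying exponential attached to the \emph{output} rather than to the kernel. Fixing $t$ in one period and putting $F(s,x)=e^{i\lambda s}g(s,x)$ --- which decays as $s\to-\infty$ since $\Im[\lambda]<0$ --- one has $e^{-i\lambda t}\Uplus e^{i\lambda t}g(t,\cdot)=e^{-i\lambda t}\,\Uplus\bigl(F\chi_{(-\infty,t)}\bigr)(t,\cdot)$, and the truncated input $F\chi_{(-\infty,t)}$ lies in $L^2_tL^{2n/(n+2)}_x$, so the first line of~\eqref{eq:KeelTao} yields
\[
\| e^{-i\lambda t}\Uplus e^{i\lambda t}g(t,\cdot) \|_{L^2_x}^2
 \ \les\ e^{2t\,\Im[\lambda]}\int_{-\infty}^{t} e^{-2s\,\Im[\lambda]}\,\|g(s,\cdot)\|_{L^{2n/(n+2)}_x}^2\,ds
 \ =\ \int_0^\infty e^{2\sigma\,\Im[\lambda]}\,\|g(t-\sigma,\cdot)\|_{L^{2n/(n+2)}_x}^2\,d\sigma .
\]
Integrating in $t$ over a period, Fubini and the $2\pi$-periodicity of $t\mapsto\|g(t,\cdot)\|_{L^{2n/(n+2)}_x}$ collapse the right-hand side to $\bigl(\int_0^\infty e^{2\sigma\,\Im[\lambda]}\,d\sigma\bigr)\|g\|_{L^2(\T;L^{2n/(n+2)}_x)}^2=\tfrac12|\Im[\lambda]|^{-1}\|g\|_{L^2(\T;L^{2n/(n+2)}_x)}^2$, which is~\eqref{eq:UplusLp}, again uniformly in $\lambda$.

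The difference formula~\eqref{eq:Uplusdiff} is pure algebra. On the Fourier side it is the second resolvent identity applied coefficientwise, $\Rminus(\lambda_1+k)-\Rminus(\lambda_2+k)=(\lambda_1-\lambda_2)\Rminus(\lambda_1+k)\Rminus(\lambda_2+k)$; in the time domain it follows by inserting $e^{-i\lambda_1 r}-e^{-i\lambda_2 r}=-i(\lambda_1-\lambda_2)\int_0^r e^{-i\lambda_1(r-\rho)}e^{-i\lambda_2\rho}\,d\rho$ and $e^{-ir\Delta}=e^{-i(r-\rho)\Delta}e^{-i\rho\Delta}$ into the convolution kernel $\mathcal{K}_{\lambda_1}$ and recognizing the outcome as $-i(\lambda_1-\lambda_2)\,\mathcal{K}_{\lambda_1}*\mathcal{K}_{\lambda_2}$. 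All operators here are bounded on $L^2(\T\times\R^n)$ by~\eqref{eq:UplusL2}, so there is nothing further to justify.

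Finally~\eqref{eq:Uplusderiv} follows from the previous two: \eqref{eq:UplusL2} and~\eqref{eq:Uplusdiff} give $\|e^{-i\lambda_1 t}\Uplus e^{i\lambda_1 t}-e^{-i\lambda_2 t}\Uplus e^{i\lambda_2 t}\|\le|\lambda_1-\lambda_2|\,|\Im[\lambda_1]|^{-1}|\Im[\lambda_2]|^{-1}$, so $\lambda\mapsto e^{-i\lambda t}\Uplus e^{i\lambda t}$ is norm-continuous on $\{\Im[\lambda]<0\}$; dividing~\eqref{eq:Uplusdiff} by $\lambda_1-\lambda_2$ and letting $\lambda_2\to\lambda_1$ in operator norm identifies the derivative as $-i\bigl(e^{-i\lambda t}\Uplus e^{i\lambda t}\bigr)^2=-ie^{-i\lambda t}(\Uplus)^2 e^{i\lambda t}$, and norm-differentiability with a continuous derivative on an open subset of $\Compl$ is precisely holomorphy. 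The hard part of the whole proposition is the sharp half-power in~\eqref{eq:UplusLp}; once one commits to absorbing $e^{t\,\Im[\lambda]}$ into the Duhamel integral so that the input to the Strichartz estimate genuinely sits in $L^2_tL^{2n/(n+2)}_x$, the rest is bookkeeping.
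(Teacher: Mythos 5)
Your proof is correct and, at its core, runs on the same mechanism as the paper's: exploit that $\Uplus g(t,\cdot)$ sees only $\chi_{s<t}g$, absorb the decaying factor $e^{i\lambda s}$ into the Duhamel input so that it lies in $L^1_tL^2_x$ (resp.\ $L^2_tL^{2n/(n+2)}_x$), and invoke the endpoint propagator bounds~\eqref{eq:KeelTao}; the algebra for~\eqref{eq:Uplusdiff} and~\eqref{eq:Uplusderiv} is the second resolvent identity read coefficientwise. The only cosmetic difference is in~\eqref{eq:UplusL2}, where you periodize the operator-valued kernel $e^{-i\lambda r}e^{-ir\Delta}\chi_{r\ge0}$ onto $\T$ and apply Young's inequality (or, alternatively, Parseval plus $\|\Rminus(\tau)\|_{2\to2}\le|\Im[\tau]|^{-1}$), whereas the paper routes every estimate through the $C(\R;L^2_x)$ output evaluated pointwise at time $t$; both organizations give the same constant and neither is materially simpler.
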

\begin{proof}
The estimates \eqref{eq:UplusL2} and \eqref{eq:UplusLp} both exploit the
facts that $\Uplus g(t,x)$ depends only on $\chi_{s < t}g(s,x)$, and that
$e^{i\lambda t}$ decays exponentially as $t \to -\infty$.
To be precise, if $g \in L^2(\T\times\R^n)$, then the $L^1_tL^2_x$ norm of
$\chi_{(-\infty, t)} e^{i\lambda s}g$ is bounded by 
$|\Im[\lambda]|^{-1}e^{-\Im[\lambda] t}$. Similarly, if 
$g \in L^2(\T; L^{\frac{2n}{n+2}}(\R^n))$ then the
$L^2_tL^{2n/(n+2)}_x$ norm of $\chi_{(-\infty, t)} e^{i\lambda s}g$
is bounded by $|\Im[\lambda]|^{-\frac12}e^{-\Im[\lambda]t}$.  
In either case the propagator 
estimates~\eqref{eq:KeelTao} complete the argument.

The difference and derivative formulas can be verified directly, or by
expressing $\Uplus$ according to its Fourier
representation~\eqref{eq:UplusRminus}.
The equivalent identities for resolvents are $\Rminus(\lambda_1) -
\Rminus(\lambda_2) = (\lambda_1 - \lambda_2)\Rminus(\lambda_1)
 \Rminus(\lambda_2)$ and 
$\frac{d}{d\lambda}\Rminus(\lambda) = \Rminus(\lambda)^2$.
\end{proof}

Corollary~\ref{cor:compact} shows that
each $T(\lambda)$, $\Im[\lambda] \le 0$, is a compact perturbation of the
identity.  Furthermore, $\norm[T(\lambda)^{-1}][]$ varies continuously over
its domain of definition, is periodic with respect to translation by $\Z$, 
and is bounded by $2$ once the imaginary part of
$\lambda$ is sufficiently negative.  If $T(\lambda)^{-1}$ existed everywhere,
this would suffice to bound its norm uniformly in $\lambda$.  By the
Fredholm Alternative, only an eigenvalue or resonance at $\lambda$ can
prevent $T(\lambda)$ from being invertible.  We examine the structure of
these singularities in the following two lemmas.

\begin{lemma} \label{lem:inverse}
Let $w \in L^n(\R^n)$ and $z \in L^\infty(\T\times \R^n)$.  Suppose the
operator $T(\lambda_0)$ fails to be invertible for some 
$\lambda_0 \in \Compl$ with $\Im[\lambda_0] < 0$.
Then the solution spaces $N_{\lambda_0} \subset Y_{\lambda_0}$ and 
$\tilde{N}_{\lambda_0} \subset Y_{\bar{\lambda}_0}$
%consisting of functions $\phi$ satisfying 
%\begin{equation} \label{eq:eigenfunction}
%K(e^{-i\lambda_0 t}\phi) = \lambda_0 e^{-i\lambda_0 t}\phi
%\end{equation}
are both nontrivial and finite dimensional.  The set of their initial values,
%Each $\phi \in N_{\lambda_0}$ belongs to $e^{i\lambda_0 t}C(\T; L^2(\R^n))$,
$X_{\lambda_0}$ and $\tilde{X}_{\lambda_0}$,
are well defined finite dimensional subspaces of $L^2(\R^n)$.

If the orthogonal projection from $X_{\lambda_0}$ onto $\tilde{X}_{\lambda_0}$
is bijective, then $T(\lambda)$ is invertible for every
other $\lambda$ in a neighborhood of $\lambda_0$.  More precisely,
\begin{equation} \label{eq:inverse}
\norm[T(\lambda)^{-1}
     (h_1 + h_2)][L^2(\T\times\R^n)] 
\le C(w,z,\lambda_0)(|\lambda - \lambda_0|^{-1}\norm[h_1][]+ \norm[h_2][])
\end{equation}
where $h_1 = e^{-i\bar{\lambda}_0 t}\overline{zw}\tilde{\phi}$, 
$\tilde{\phi} \in \tilde{N}_{\lambda_0}$,  
and $h_2$ belongs to the $L^2$-orthogonal complement of
$e^{-i\bar{\lambda}_0 t}\overline{zw}\tilde{N}_{\lambda_0}$.
\end{lemma}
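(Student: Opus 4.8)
The plan is to treat $T(\lambda)=I-A(\lambda)$, where $A(\lambda):=ie^{-i\lambda t}w\Uplus wze^{i\lambda t}=iwS(\lambda)wz$ and $S(\lambda):=e^{-i\lambda t}\Uplus e^{i\lambda t}$, as a holomorphic compact perturbation of the identity on the connected region $\Im[\lambda]<0$: compactness comes from Corollary~\ref{cor:compact}, holomorphy and the derivative formula $S'(\lambda)=-iS(\lambda)^2$ (hence $A'(\lambda)=wS(\lambda)^2wz$) from Proposition~\ref{prop:Uplus}, and on this region $S(\lambda)$ is bounded while $i\partial_t-\Delta-\lambda$ is invertible on $2\pi$-periodic functions. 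The Fredholm Alternative gives at once that $\ker T(\lambda_0)$ and $\ker T(\lambda_0)^\ast$ are nontrivial of a common finite dimension $d$. Using the intertwining relations preceding Proposition~\ref{prop:Uplus} (together with the integrability of the eigenfunctions supplied by \eqref{eq:KeelTao} and local regularity), one checks that $g\mapsto\phi$ defined by $e^{-i\lambda_0 t}\phi=S(\lambda_0)(wzg)$ is a linear bijection of $\ker T(\lambda_0)$ onto $e^{-i\lambda_0 t}N_{\lambda_0}$, with inverse $\phi\mapsto g=iwe^{-i\lambda_0 t}\phi$; dually $T(\lambda_0)^\ast=I+iw\bar z\,S(\lambda_0)^\ast w$ with $S(\lambda_0)^\ast=e^{-i\bar\lambda_0 t}\Uminus e^{i\bar\lambda_0 t}$, and $\tilde g\mapsto\tilde\phi$ defined by $e^{-i\bar\lambda_0 t}\tilde\phi=S(\lambda_0)^\ast(w\tilde g)$ is a bijection of $\ker T(\lambda_0)^\ast$ onto $e^{-i\bar\lambda_0 t}\tilde N_{\lambda_0}$, with inverse $\tilde\phi\mapsto\tilde g=-ie^{-i\bar\lambda_0 t}\overline{wz}\,\tilde\phi$. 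Hence $N_{\lambda_0}$ and $\tilde N_{\lambda_0}$ are nontrivial and $d$-dimensional. Since $\Im[\lambda_0]<0$, the associated $e^{-i\lambda_0 t}\phi$, $e^{-i\bar\lambda_0 t}\tilde\phi$ lie in $C(\T;L^2(\R^n))$, so the initial-value maps are well defined and, by uniqueness for \eqref{eq:Schr}, injective; thus they identify $X_{\lambda_0}$ and $\tilde X_{\lambda_0}$ with $d$-dimensional subspaces of $L^2(\R^n)$. Note that $\ker T(\lambda_0)^\ast=e^{-i\bar\lambda_0 t}\overline{zw}\,\tilde N_{\lambda_0}$ is precisely the span of the admissible vectors $h_1$, and its orthogonal complement $\overline{\mathrm{ran}\,T(\lambda_0)}$ is the admissible space of $h_2$.

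Since $\|A(\lambda)\|\to0$ as $\Im[\lambda]\to-\infty$ (Corollary~\ref{cor:compact}), $T$ is invertible somewhere on the connected set $\{\Im[\lambda]<0\}$, so analytic Fredholm theory makes $T(\lambda)^{-1}$ meromorphic there with isolated poles; in particular $T(\lambda)$ is automatically invertible on some punctured disk $U\setminus\{\lambda_0\}$. To identify the order and residue of the pole at $\lambda_0$ I would run a Feshbach--Schur/Grushin reduction: pick $R_-:\Compl^d\to L^2(\T\times\R^n)$ an isomorphism onto $(\mathrm{ran}\,T(\lambda_0))^\perp=\ker T(\lambda_0)^\ast$ and $R_+:L^2\to\Compl^d$ surjective with $\ker R_+=(\ker T(\lambda_0))^\perp$, so that $\bigl(\begin{smallmatrix}T(\lambda)&R_-\\ R_+&0\end{smallmatrix}\bigr)$ is boundedly invertible near $\lambda_0$ with holomorphic inverse $\bigl(\begin{smallmatrix}E(\lambda)&E_+(\lambda)\\ E_-(\lambda)&E_{-+}(\lambda)\end{smallmatrix}\bigr)$. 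Then $T(\lambda)$ is invertible exactly when the $d\times d$ matrix $E_{-+}(\lambda)$ is, and $T(\lambda)^{-1}=E(\lambda)-E_+(\lambda)E_{-+}(\lambda)^{-1}E_-(\lambda)$. The standard identities give $E_{-+}(\lambda_0)=0$, $E_-(\lambda_0)$ annihilating $(\ker T(\lambda_0)^\ast)^\perp$, $E_+(\lambda_0)$ an isomorphism onto $\ker T(\lambda_0)$, and
\[
E_{-+}'(\lambda_0)=E_-(\lambda_0)\,A'(\lambda_0)\,E_+(\lambda_0),
\]
so $\lambda_0$ produces at worst a simple pole of $T(\lambda)^{-1}$ precisely when the pairing $(g,\tilde g)\mapsto\langle A'(\lambda_0)g,\tilde g\rangle_{L^2(\T\times\R^n)}$ on $\ker T(\lambda_0)\times\ker T(\lambda_0)^\ast$ is nondegenerate.

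The crux is to identify this pairing with the $L^2(\R^n)$-inner product of initial values. Using $A'(\lambda_0)=wS(\lambda_0)^2wz$, the self-adjointness of the real multiplier $w$, and the relations $e^{-i\lambda_0 t}\phi=S(\lambda_0)(wzg)$, $e^{-i\bar\lambda_0 t}\tilde\phi=S(\lambda_0)^\ast(w\tilde g)$,
\[
\langle A'(\lambda_0)g,\tilde g\rangle=\langle S(\lambda_0)(wzg),\,S(\lambda_0)^\ast(w\tilde g)\rangle=\langle e^{-i\lambda_0 t}\phi,\,e^{-i\bar\lambda_0 t}\tilde\phi\rangle_{L^2(\T\times\R^n)}=\int_{\T}\langle\phi(t),\tilde\phi(t)\rangle_{L^2(\R^n)}\,dt,
\]
the last step because $\overline{e^{-i\bar\lambda_0 t}}\,e^{-i\lambda_0 t}=1$. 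Since $\phi$ solves $(i\partial_t-\Delta+V)\phi=0$ and $\tilde\phi$ solves $(i\partial_t-\Delta+\overline V)\tilde\phi=0$, a one-line integration by parts gives $\frac{d}{dt}\langle\phi(t),\tilde\phi(t)\rangle_{L^2(\R^n)}=0$ (the Laplacian contributions cancel since $\int(\Delta\phi)\overline{\tilde\phi}=\int\phi\overline{\Delta\tilde\phi}$, the potential contributions since $\overline{\overline V}=V$), using the $x$-regularity of eigenfunctions furnished by local theory; hence the integrand is the constant $\langle\Phi,\tilde\Phi\rangle_{L^2(\R^n)}$ and
\[
\langle A'(\lambda_0)g,\tilde g\rangle=|\T|\,\langle\Phi,\tilde\Phi\rangle_{L^2(\R^n)}.
\]
Thus the pairing is nondegenerate exactly when the form $(\Phi,\tilde\Phi)\mapsto\langle\Phi,\tilde\Phi\rangle_{L^2}$ on $X_{\lambda_0}\times\tilde X_{\lambda_0}$ is nondegenerate, which, as $\dim X_{\lambda_0}=\dim\tilde X_{\lambda_0}=d$, is the same as bijectivity of the orthogonal projection $X_{\lambda_0}\to\tilde X_{\lambda_0}$ --- hypothesis~\ref{C3}.

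Under \ref{C3}, then, $E_{-+}(\lambda)=(\lambda-\lambda_0)\bigl(E_{-+}'(\lambda_0)+O(\lambda-\lambda_0)\bigr)$ is invertible for $\lambda$ near but distinct from $\lambda_0$, with $\|E_{-+}(\lambda)^{-1}\|\les|\lambda-\lambda_0|^{-1}$; and for $h=h_1+h_2$ as in the statement, $E_-(\lambda)h=E_-(\lambda_0)h_1+O(|\lambda-\lambda_0|)(\|h_1\|+\|h_2\|)$ because $E_-(\lambda_0)h_2=0$, so $\|E_-(\lambda)h\|\les\|h_1\|+|\lambda-\lambda_0|\,\|h_2\|$. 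Substituting into $T(\lambda)^{-1}=E(\lambda)-E_+(\lambda)E_{-+}(\lambda)^{-1}E_-(\lambda)$ and using the boundedness of $E(\lambda)$ and $E_+(\lambda)$ yields exactly \eqref{eq:inverse}. The bulk of the work is the bookkeeping of the first paragraph --- verifying the four bijections with their conjugations and the finite-dimensionality and injectivity needed for the initial-value spaces --- together with enough $x$-regularity of the eigenfunctions to license the integration by parts; the conceptual point, and the step that forces the precise shape of \ref{C3}, is the identity $\langle A'(\lambda_0)g,\tilde g\rangle=|\T|\,\langle\Phi,\tilde\Phi\rangle$.
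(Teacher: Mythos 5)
Your proof is correct and reaches the same conclusions as the paper, but by a genuinely different route. The paper constructs $T(\lambda)^{-1}$ by hand: for a unit vector $h_1 \in \coker T(\lambda_0)$ it selects the unique partner $g_1 \in \ker T(\lambda_0)$ with a nontrivial derivative pairing, corrects $g_1$ inside ${\rm coimage}\,T(\lambda_0)$ so that $T(\lambda)(g_1 + g'(\lambda)) \in \coker T(\lambda_0)$, and then assembles $T(\lambda)^{-1}h_1$ as a bounded linear combination of $(\lambda-\lambda_0)^{-1}g_{h_j}(\lambda)$, with a separate two-stage argument for $h_2$. You instead set up a Grushin (Feshbach--Schur) reduction and read the at-worst-simple-pole structure directly from invertibility of the $d\times d$ effective Hamiltonian $E_{-+}'(\lambda_0)=E_-(\lambda_0)A'(\lambda_0)E_+(\lambda_0)$, together with $E_{-+}(\lambda_0)=0$ and the fact that $E_-(\lambda_0)$ annihilates ${\rm image}\,T(\lambda_0)$; the asymmetric estimate \eqref{eq:inverse} then drops out of $T(\lambda)^{-1}=E(\lambda)-E_+(\lambda)E_{-+}(\lambda)^{-1}E_-(\lambda)$. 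Both arguments hinge on the identical computational core: identifying the derivative pairing $\langle A'(\lambda_0)g,\tilde g\rangle$ (the paper's $a'_{g,h}(\lambda_0)$, up to normalization) with $2\pi\langle\Phi,\tilde\Phi\rangle_{L^2(\R^n)}$ via the conservation identity $\frac{d}{dt}\langle\phi(t),\tilde\phi(t)\rangle_{L^2_x}=0$, the paper's ``non-selfadjoint unitarity of propagation.'' Your formalism is more systematic and makes the meromorphy and pole order manifest from general analytic Fredholm theory, whereas the paper's version is self-contained and avoids introducing the auxiliary block operator. One point where you are actually slightly more careful than the paper: you note explicitly that the initial-value maps $N_{\lambda_0}\to X_{\lambda_0}$ and $\tilde N_{\lambda_0}\to\tilde X_{\lambda_0}$ must be injective in order for the nondegeneracy of the kernel pairing to be accurately encoded by the bijectivity of the orthogonal projection $X_{\lambda_0}\to\tilde X_{\lambda_0}$ asserted in \ref{C3}.
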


\begin{proof}
%Recall that $L^2(\T\times\R^n)$ is unitarily equivalent to $Y_\lambda$
%by the map $g \in L^2(\T\times\R^n) \mapsto e^{i\lambda t}g \in Y_\lambda$.
%In order to avoid excessive conjugation when dealing with functions in 
%$Y_\lambda$, we adopt the notation for Fourier series
%\begin{equation*}
%\hat{\psi}(\lambda+n, x) := (e^{-i\lambda t}\psi)\hat{\,}(n, x).
%\end{equation*}
%for $\psi \in Y_\lambda$.
%Plancherel's identity indicates that $\psi \in Y_{\lambda}$ precisely
%when $\hat{\psi} = \hat{\psi}(\lambda+\,\cdot\,, \cdot\,) \in \ell^2_n L^2_x$.

The operator $T(\lambda_0)$ is a compact perturbation of the identity, and by
assumption it is not invertible.  The Fredholm Alternative asserts that 
$T(\lambda_0)$ has a finite dimensional kernel,
a cokernel of the same dimension, and that it is an invertible map between
their respective orthogonal complements.

Every element $g \in L^2(\T\times\R^n)$ in the kernel of $T(\lambda_0)$ is 
associated to a prospective eigenfunction $e^{-i\lambda_0 t}\phi$ by the
relations $\phi = \Uplus wz e^{i\lambda_0 t}g$ and
$g = ie^{-i\lambda_0 t}w\phi$. 
%Note that $zg$ is still in $L^2(\T\times\R^n)$,
%thus $(zg)\hat{\,} \in \ell^2_n L^2_x$ and
%$(wzg)\hat{\,} \in \ell^2_n L^{\frac{2n}{n+2}}_x$.
%Because $\Im[\lambda_0] < 0$, we are able to exploit the resolvent 
%estimate~\eqref{eq:resolvent} from Lemma~\ref{lem:resolvent} to conclude
%that $\hat{\phi} \in \ell^2_n L^2_x$ and $\phi \in Y_{\lambda_0}$.  
Note that $wzg \in L^2(\T; L^{\frac{2n}{n+2}}(\R^n))$, so the
mapping estimate~\eqref{eq:UplusLp} implies that $e^{-\lambda_0 t}\phi$ 
belongs to $L^2(\T\times\R^n)$.  That makes $e^{-i\lambda_0 t}\phi$ an
eigenfunction of $K$, and $\phi \in N_{\lambda_0}$ by definition.
It follows immediately that
\begin{equation*}
\ker T(\lambda_0) = e^{-i\lambda_0 t}w N_{\lambda_0}.
\end{equation*}
In general, a function $\phi \in L^2(\T\times\R^n)$ should not have a
meaningful initial value $\Phi(x) = \phi(0,x)$.  On the other hand,
$\phi$ solves the inhomogeneous Schr\"odinger equation
\begin{equation*}
(i\partial_t -\Delta)\phi = -V\phi \in L^{2, loc}_t L^{2n/(n+2)}_x
\end{equation*}
from which Duhamel's formula (averaged over all starting times
$s \in [-2\pi,0]$) yields
\begin{equation}
\begin{aligned}
\phi(0,x) &= (2\pi)^{-1}\int_{-2\pi}^0 \bigg(e^{i\Delta s}\phi(s,x)
 +i \int_s^0 e^{i\Delta r}V\phi(r,x)\,dr \bigg)\,ds \\
 &= (2\pi)^{-1} \bigg(\int_{-2\pi}^0 e^{i\Delta s}\phi(s,x)\,ds
  +i \int_{-2\pi}^0 e^{i\Delta r}(r+2\pi)V\psi(r,x)\, dr \bigg)
\end{aligned}
\end{equation}
The first integral evaluates to a function in $L^2(\R^n)$ because
$e^{i\Delta s}$ is unitary and $\phi \in L^{1,loc}_tL^2_x$.  The second
integral does likewise, via the dual statement of~\eqref{eq:KeelTao}.

\begin{remark}
Because $\ker T(\lambda_0)$ is a finite dimensional space, the norms of
$g \in \ker T(\lambda_0) \subset L^2(\T\times\R^n)$ and $\phi \in
N_{\lambda_0} \subset Y_{\lambda_0}$ are equivalent.  These norms are also
equivalent to the norm of $\Phi \in X_{\lambda_0} \subset L^2(\R^n)$
for the same reason.
\end{remark}

The image of $T(\lambda_0)$ consists of all
functions orthogonal to the kernel of its adjoint, namely
\begin{equation*}
T(\lambda_0)^* = I + ie^{-i\bar{\lambda_0}t}\overline{zw}\Uminus\overline{w}
 e^{i\bar{\lambda}_0 t}.
\end{equation*}  
%For each $\lambda$, the
%natural dual space to $Y_\lambda$ is $Y_{\bar{\lambda}}$ based on the pairing
%\begin{equation*}
%(\phi, \psi) = 
%  \la e^{-i\lambda t}\phi, e^{-i\bar{\lambda}t} \psi\ra_{L^2(\T\times\R^n)}
% = \int_0^{2\pi} \la \phi(t, \cdot\,), \psi(t, \cdot\,)\ra\, dt
% = \sum_{n\in\Z} \la \hat{\phi}(\lambda+n, \cdot\,), 
%                     \hat{\psi}(\bar{\lambda}+n, \cdot\,)\ra
%\end{equation*}
%where $\phi \in Y_\lambda$ and $\psi \in Y_{\bar{\lambda}}$.  The adjoint
%operator in question is therefore $I + i\overline{Zw}\Uminus \overline{w}$
%acting on $Y_{\bar{\lambda}_0}$.
Every element $\tilde{g}$ in the kernel of $T(\lambda_0)^*$
is associated to an eigenfunction
$e^{-i\bar{\lambda}_0 t}\tilde{\phi} \in \tilde{N}_{\lambda_0}$ of $\tilde{K}$
by the relations
$\tilde{\phi} = \Uminus \overline{w}e^{i\bar{\lambda}_0 t}\tilde{g}$ and 
$\tilde{g} = -ie^{-i\bar{\lambda}_0 t}\overline{zw}\tilde{\phi}$.
The argument which places $\tilde{\phi}$ in $\tilde{N}_{\lambda_0}$ and
establishes the existence of $\tilde{\Phi}$ is the 
same as the one for $\phi$ above.  We can now express the image of 
$T(\lambda_0)$ as
\begin{equation}
{\rm image}\,T(\lambda_0) = \{g \in L^2(\T\times\R_n):
 \la g,\,e^{-i\bar{\lambda}_0 t}\overline{zw}\tilde{\phi}\ra = 0,\ 
 \tilde{\phi} \in \tilde{N}_{\lambda_0}\}.
\end{equation}
and the cokernel of $T(\lambda_0)$ as the subspace 
$e^{-i\bar{\lambda}_0 t}\overline{zw} \tilde{N}_{\lambda_0}$.  
Our next goal is to find an inverse image for each 
$h_1 \in \coker T(\lambda_0)$ with respect to the map $T(\lambda)$,
$\lambda \not= \lambda_0$.

At first, let $g$ and $h$ be any two functions in $L^2(\T\times\R^n)$.
%$g = e^{-i\lambda_0 t}w\phi$ and 
%$h_1 = e^{-i\bar{\lambda}_0 t}\overline{zw}\tilde{\phi}_0$, with 
%$\phi \in N_{\lambda_0}$ and $\tilde{\phi} \in \tilde{N}_{\lambda_0}$.
By Proposition~\ref{prop:Uplus}, the scalar restriction of $T(\lambda)$
described by
\begin{equation*}
a_{g,h}(\lambda) = \la T(\lambda)g, h\ra
\end{equation*}
is a holomorphic function in the lower halfplane, with derivative
\begin{equation} \label{eq:a'}
|a'_{g,h}(\lambda)| = 
|\la w e^{-i\lambda t}(\Uplus)^2 e^{i\lambda t} wzg, h\ra|
 \les |\Im[\lambda]|^{-1}\norm[g][]\norm[h][].
\end{equation}

Now fix a particular $h_1 = e^{-i\bar{\lambda}_0 t}\overline{zw}\tilde{\phi}_1$
with $\tilde{\phi}_1 \in \tilde{N}_{\lambda_0}$ of approximately unit norm,
and suppose that $g = e^{-i \lambda_0 t}w\phi$, $\phi \in N_{\lambda_0}$.  
By construction $a_{g,h_1}(\lambda_0) = 0$ and
\begin{align*}
a'_{g,h_1}(\lambda_0) = 
-i \la \Uplus V \phi, \Uminus \overline{V} \tilde{\phi}_1\ra &= 
i\la \phi, \tilde{\phi}_1\ra \\
&= i\int_0^{2\pi}
    \la \phi(t, \cdot\,), \tilde{\phi}_1(t, \cdot\,)\ra_{L^2_x}\, dt \\
&= 2\pi i \la \Phi, \tilde{\Phi}_1\ra_{L^2_x}
\end{align*}
The last line in this chain of equations is a non-selfadjoint version
of the unitarity of propagation.  The key property is that $\tilde{\phi}_1$
solves a Schr\"odinger equation with the potential $\overline{V}$.

If the orthogonal projection of $X_{\lambda_0}$ onto $\tilde{X}_{\lambda_0}$ is
bijective, then there exists a unique unit vector $\Phi_1 \in X_{\lambda_0}$
such that
\begin{equation*}
|\la \Phi_1, \tilde{\Phi}_1\ra|\gtr 1
\end{equation*}
while $\la \Phi_1, \tilde{\Phi}'\ra = 0$ for all 
$\tilde{\Phi}' \in \tilde{X}_{\lambda_0}$ orthogonal to $\tilde{\Phi}_1$.

For the associated function $g_1 = e^{-i\lambda_0 t}w\phi_1$, this provides the
lower bound
\begin{equation*}
|a_{g_1,h_1}(\lambda)| \gtr |\lambda - \lambda_0|
\end{equation*}
while at the same time
\begin{equation*}
|a_{g_1,h'}(\lambda)| \les |\lambda - \lambda_0|^2
\end{equation*}
for all unit vectors $h' \in \coker T(\lambda_0)$ orthogonal to $h_1$.

%In conclusion, for each unit vector $h$ orthogonal to the image of 
%$T(\lambda_0)$, there is a unit vector $g_h \in \ker T(\lambda_0)$ so that
%$\la T(\lambda)g, h\ra \gtr |\lambda - \lambda_0|$ provided
%$|\lambda - \lambda_0|$ is sufficiently small.

Returning to the derivative estimate~\eqref{eq:a'}, we observe that
\begin{equation*}
\bignorm[T(\lambda)g_1 \big|_{{\rm image}\,T(\lambda_0)}][] \les 
  |\lambda - \lambda_0|.
\end{equation*}
Switching the roles of $g$ and $h$ gives the bound 
\begin{equation*}
|\la T(\lambda)g, h_1 + h'\ra| \les |\lambda - \lambda_0| \norm[g][]
\end{equation*}
for every $g \in L^2(\T\times\R^n)$ and any unit vector 
$h_1 + h' \in \coker T(\lambda_0)$. 

Recall that $T(\lambda_0)$ is an invertible map between its co-image
and image.  By continuity, the restrictions of $T(\lambda)$ to these spaces
are uniformly invertible within a small neighborhood of $\lambda_0$. 
Therefore, given $g_1$ as constructed above there exists a unique element
$g'(\lambda) \in {\rm coimage}\,T(\lambda_0)$ so that 
$T(\lambda)(g_1 + g'(\lambda)) \in \coker T(\lambda_0)$.  
The norm of $g'(\lambda)$ is of order $|\lambda - \lambda_0|$.

Let $g_{h_1}(\lambda) = g_1 + g'(\lambda)$.  This is a vector of approximately
unit norm that satisfies both 
\begin{equation*}
T(\lambda) g_{h_1}(\lambda) = C_{h_1}(\lambda - \lambda_0)h_1 + 
  {\mathcal O}(|\lambda - \lambda_0|^2)
\end{equation*}
and also $T(\lambda) g_{h_1}(\lambda) \in \coker T(\lambda_0)$.
Choose any basis $\{h_j\}$ for $\coker T(\lambda_0)$.  The desired inverse
image $T(\lambda)^{-1}h_1$ will be a linear combination (with bounded 
coefficients) of the functions $(\lambda - \lambda_0)^{-1}g_{h_j}(\lambda)$.

For any unit vector $h_2 \in {\rm image}\, T(\lambda_0)$, there exists a 
unique $g_{h_2}(\lambda)$ in the co-image of $T(\lambda_0)$ so that
\begin{equation*}
T(\lambda) g_{h_2}(\lambda) - h_2 = h' \in \coker T(\lambda_0).
\end{equation*}
The norms of $g_{h_2}$ and $h'$ will be of order $1$ and 
$|\lambda - \lambda_0|$, respectively.  Thus $T(\lambda)^{-1}h'$, and 
finally $T(\lambda)^{-1} h_2 = g_{h_2} + T(\lambda)^{-1}h'$ will both
be of bounded norm.
\end{proof}

The fact that $\Im[\lambda_0] < 0$ only played a role to the extent that we
relied upon the propagator estimates of Proposition~\ref{prop:Uplus}.  If
$\lambda_0 \in \R$ instead, these can be replaced with a weaker set of
bounds based on the mapping properties of $\Rminus(\lambda)$ along the
Real axis.

\begin{proposition} \label{prop:Uplus'}
For each $\lambda \in \Compl$, $\Im[\lambda] \le 0$, the operator
$e^{-i\lambda t}\Uplus e^{i\lambda t}$ is subject to the following estimates.
\begin{align}
\norm[e^{-i\lambda t}\Uplus e^{i\lambda t}g][L^{\frac{2n}{n-2}}(\R^n; L^2(\T))]
&\les \norm[g][L^{\frac{2n}{n+2}}(\R^n; L^2(\T))] \label{eq:UplusKRS} \\
\norm[\japanese[x]^{-1}e^{-i\lambda t}\Uplus
  e^{i\lambda t}g][L^2(\R^n\times\T)]
&\les \norm[\japanese[x]g][L^2(\R^n\times\T)] \tag{\ref{eq:UplusKRS}'}
\label{eq:Uplusweighted} \\
\norm[e^{-i\lambda t}\Uplus e^{i\lambda t}g][L^{\frac{2n}{n-2}}(\R^n; L^2(\T))]
&\les \norm[\japanese[x]g][L^2(\R^n\times\T)] \tag{\ref{eq:UplusKRS}''}
\label{eq:UplusRuizVega} 
\end{align}
Given two values $\lambda_1 \not= \lambda_2$, the difference can still be
expressed as
\begin{equation*} \tag{\ref{eq:Uplusdiff}}
e^{-i\lambda_1 t}\Uplus e^{i\lambda_1 t} -
 e^{-i\lambda_2 t}\Uplus e^{i\lambda_2 t} = -i(\lambda_1 - \lambda_2)\big(
 e^{-i\lambda_1 t}\Uplus e^{i\lambda_1 t}\big)\big(e^{-i\lambda_2 t}\Uplus
 e^{i\lambda_2 t}\big).
\end{equation*}
\end{proposition}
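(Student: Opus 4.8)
The plan is to reduce each assertion to a known mapping property of the free resolvent $\Rminus(\tau)$ along the real axis, exactly as in Proposition~\ref{prop:Uplus} but with the $L^2$-based uniform bounds replaced by $L^p$-type uniform bounds that survive down to $\Im[\tau]=0$. First I would pass to the partial Fourier transform in time, using the identification of $Y_{\lambda'}$ with $\ell^2(k; L^2(\R^n))$ from Section~2. By~\eqref{eq:UplusRminus}, the operator $e^{-i\lambda t}\Uplus e^{i\lambda t}$ acts on the $k$-th Fourier coefficient as multiplication by $i\Rminus(\lambda+k)$, where $\lambda = \lambda'+i\mu$, $\mu \le 0$. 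Thus each of \eqref{eq:UplusKRS}, \eqref{eq:Uplusweighted}, \eqref{eq:UplusRuizVega} becomes the statement that a certain mixed-norm bound on $\R^n$, holding uniformly in the spectral parameter $\tau$ over $\{\Im[\tau]\le 0\}$, can be summed in $\ell^2(k)$. Since the target and source norms in each line treat the $x$ and $t$ variables in a product fashion (outer $L^{2n/(n-2)}_x$ or $L^2_x$-with-weight, inner $L^2_t$), Minkowski's inequality together with Plancherel in $t$ lets me interchange the $\ell^2(k)$ sum with the $L^p_x$ norm at the cost that is favorable when the outer exponent is $\ge 2$, which it is in every case ($\frac{2n}{n-2}\ge 2$ and the weighted line is genuinely $L^2_x$). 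So the whole proposition follows once the three resolvent inequalities are in hand uniformly in $\tau$.

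For \eqref{eq:UplusKRS} the required input is precisely the uniform Kenig--Ruiz--Stein bound $\norm[\Rminus(\tau)][L^{2n/(n+2)}\to L^{2n/(n-2)}]\les 1$ from~\cite{KRS}, already cited in the proof of Proposition~\ref{prop:resolvent}; applying it coefficient-by-coefficient and summing in $\ell^2(k)$ via Minkowski gives the claim. For the weighted bound \eqref{eq:Uplusweighted} the input is the uniform limiting absorption estimate $\norm[\japanese[x]^{-1}\Rminus(\tau)\japanese[x]^{-1}][L^2\to L^2]\les 1$, valid for all $\tau$ with $\Im[\tau]\le 0$ in dimensions $n\ge 3$ (Agmon--Kato--Kuroda type, or as a consequence of the same $L^p$ estimates combined with H\"older on the weights); again one sums in $k$. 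For the hybrid bound \eqref{eq:UplusRuizVega} the natural input is the Ruiz--Vega (or Barcel\'o--Ruiz--Vega) inequality $\norm[\Rminus(\tau)\japanese[x]^{-1}][L^2\to L^{2n/(n-2)}]\les 1$, which interpolates between the previous two and is likewise uniform in $\tau$; alternatively one factors $\Rminus(\tau)\japanese[x]^{-1}=\big(\japanese[x]^{-1}\Rminus(\tau)\japanese[x]^{-1}\big)\japanese[x]$ only formally, so it is cleaner to quote the estimate directly. Finally, the difference formula~\eqref{eq:Uplusdiff} is identical to the one proved in Proposition~\ref{prop:Uplus}: it was derived there from the resolvent identity $\Rminus(\lambda_1)-\Rminus(\lambda_2)=(\lambda_1-\lambda_2)\Rminus(\lambda_1)\Rminus(\lambda_2)$, which holds for all $\lambda_1,\lambda_2$ in the closed lower halfplane (interpreting $\Rminus$ on the real axis as the boundary value from below), so no new argument is needed — I would simply remark that the verification carries over verbatim.

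The main obstacle is not any single estimate but the matter of uniformity down to and including the real axis: the $L^2\to L^2$ resolvent bounds blow up there, which is exactly why the $|\Im[\lambda]|^{-1}$ and $|\Im[\lambda]|^{-1/2}$ factors appeared in Proposition~\ref{prop:Uplus}, and the content of Proposition~\ref{prop:Uplus'} is that switching to the scaling-critical $L^p$ pairs removes that divergence. One must be careful that the boundary values $\Rminus(\tau)$ for real $\tau$ are well-defined as bounded operators between the stated $L^p$ spaces (this is the substance of the KRS and Ruiz--Vega theorems) and that the Fourier synthesis in $t$ is legitimate for $g\in L^2(\R^n\times\T)$ or $g\in L^{2n/(n+2)}(\R^n;L^2(\T))$ — here the fact that $\supp_t g$ may be taken in $[0,\infty)$, so that $\hat g$ extends analytically to $\Im[\tau]\le 0$ and the restriction to $\{\tau\in\lambda+\Z\}$ makes sense, is what ties the $\mu<0$ and $\mu=0$ cases together. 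Once these bookkeeping points are settled the proof is a routine transcription.
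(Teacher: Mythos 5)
Your overall reduction — pass to the Fourier series in $t$ via~\eqref{eq:UplusRminus}, use Minkowski's inequality to swap the $\ell^2_k$ sum with the spatial $L^p$ norm (in the direction allowed by $2n/(n-2)\ge 2$ on the target and $2n/(n+2)\le 2$ on the source), and then apply a uniform-in-$\tau$ resolvent bound coefficient by coefficient — is exactly what the paper does, and the first two lines go through as you describe: \eqref{eq:UplusKRS} from the Kenig--Ruiz--Sogge bound and \eqref{eq:Uplusweighted} from the weighted $L^2\to L^2$ estimate (the paper cites Simon for the sharp constant; your Agmon--Kato--Kuroda attribution is an acceptable substitute). The difference formula is dealt with the same way you propose.

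The gap is in \eqref{eq:UplusRuizVega}. You reduce it to the resolvent inequality $\norm[\Rminus(\tau)\psi][2n/(n-2)]\les\norm[\japanese[x]\psi][2]$ uniformly in $\Im[\tau]\le 0$, and then assert that this is the Ruiz--Vega theorem, or follows by interpolation between the KRS and weighted-$L^2$ bounds. Neither of those is literally available. Theorem~3.1 of~\cite{RV} proves a \emph{different} estimate, with domain $\japanese[x]^{-\frac12-\eps}\dot H^{-\frac12}(\R^n)$ rather than $\japanese[x]^{-1}L^2(\R^n)$; the weights and the regularity both differ, and it is not a corollary. The interpolation you gesture at would require running an analytic (Stein) family through an operator that carries weight factors on \emph{both} sides in one endpoint and on \emph{neither} side in the other, and that is not a routine Riesz--Thorin step; the paper itself remarks that such a derivation is ``conceivable'' but declines to carry it out. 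Instead it devotes Lemma~\ref{lem:RminusL2Lp} (a page-long argument combining spherical restriction, a Hardy/Schur inequality, the Stein--Tomas theorem, and a Calder\'on--Zygmund bound in logarithmic spherical coordinates) to proving precisely this inequality. So your proposal is missing the one genuinely new analytic ingredient of the proposition; the rest is a faithful transcription of the paper's argument. The throwaway factorization $\Rminus(\tau)\japanese[x]^{-1}=(\japanese[x]^{-1}\Rminus(\tau)\japanese[x]^{-1})\japanese[x]$ does not help — you already flagged it as only formal, and it cannot, since composing the weighted $L^2\to L^2$ bound with multiplication by $\japanese[x]$ never produces an $L^{2n/(n-2)}$ target.
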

\begin{proof}
The order of variables is interchanged from Proposition~\ref{prop:Uplus}
so that we may work entirely on the Fourier side with respect to $t$.
By Minkowski's inequality for mixed norms~\cite{LieLos} and 
Plancherel's identity,
\begin{equation*}
\norm[\hat{g}][\ell^2_n L^{2n/(n+2)}_x] \le 
\norm[\hat{g}][L^{2n/(n+2)}_x \ell^2_n]  =
\norm[g][L^{\frac{2n}{n+2}}(\R^n; L^2(\T))]
\end{equation*}
Following the Fourier characterization of $\Uplus$ given
in~\eqref{eq:UplusRminus} leads to the statement of~\eqref{eq:UplusKRS},
\begin{alignat*}{2}
\norm[e^{-i\lambda t}\Uplus e^{i\lambda t}g][L^{\frac{2n}{n-2}}(\R^n; L^2(\T))]
 &=
\norm[(e^{-i\lambda t}\Uplus e^{i\lambda t}g)\hat{\,}\,][L^{2n/(n-2)}_x 
 \ell^2_n] &&\\
&\le \norm[(e^{-i\lambda t}\Uplus e^{i\lambda t}g)\hat{\,}\,][\ell^2_n 
  L^{2n/(n-2)}_x] 
&&\les \norm[\hat{g}][\ell^2_n L^{2n/(n+2)}_x] \\
&& &\le \norm[g][L^{\frac{2n}{n+2}}(\R^n; L^2(\T))]
\end{alignat*}
where the second to last inequality is the uniform 
$L^{\frac{2n}{n+2}}\to L^{\frac{2n}{n-2}}$
bound for $\Rminus(\lambda + n), n\in\Z$ proved in~\cite{KRS}.

A proof of~\eqref{eq:Uplusweighted} which captures the sharp constant is given
in~\cite{Simon}.  The basic argument is the same as the one above, however the
Hilbert space structure of $\japanese[x]L^2(\R^n)$ and the Plancherel
identity permit precise computation of the various norms.  Finally, the
statement~\eqref{eq:UplusRuizVega} is equivalent to the resolvent bound
\begin{equation} \label{eq:RminusL2Lp}
\norm[\Rminus(\tau)\psi][\frac{2n}{n-2}] \les 
\norm[\japanese[x]\psi][2]
\end{equation}
uniformly over all $\Im[\tau] \le 0$.  
It is conceivable that~\eqref{eq:RminusL2Lp}
can be derived directly from the resolvent estimates in~\cite{Simon}
and~\cite{KRS} by factorizing $\Rminus(\tau)$ through unweighted $L^2$.  
Theorem~3.1 of~\cite{RV} is another closely related statement, differing only
in the weights and regularity of the domain 
($\japanese[x]^{-\frac12-\eps}\dot{H}^{-\frac12}(\R^n)$ versus 
$\japanese[x]^{-1}L^2(\R^n)$).  We present
a complete proof as Lemma~\ref{lem:RminusL2Lp}, in the section devoted
to Fourier analysis.
\end{proof}

\begin{lemma} \label{lem:inverse'}
Let $w \in L^2(\R^n)$ and $z \in L^\infty(\T\times\R^n)$.  Suppose the operator
$T(\lambda_0)$ fails to be invertible at $\lambda_0 \in \R$ and that neither
$K$ nor $\overline{K}$ has a resonance at $\lambda_0$.
The solution spaces $N_{\lambda_0} \subset Y_{\lambda_0}$ and
$\tilde{N}_{\lambda_0} \subset Y_{\lambda_0}$
are nontrivial and finite dimensional, and their initial values form
finite dimensional subspaces  $X_{\lambda_0}, \tilde{X}_{\lambda_0}
\subset L^2(\R^n)$.

If the orthogonal projection from $X_{\lambda_0}$ onto $\tilde{X}_{\lambda_0}$
is bijective, and if the spaces $e^{-i\lambda_0 t}N_{\lambda_0}$ and
$e^{-i\lambda_0 t}\tilde{N}_{\lambda_0}$ 
are both contained in
$L^{\frac{2n}{n+2}}(\R^n ; L^2(\T)) + \japanese[x]^{-1}L^2(\R^n\times\T)$,
then $T(\lambda)$ is invertible for every other $\lambda$ in the lower
halfplane sufficiently close to $\lambda_0$, with the norm estimate
\begin{equation} \label{eq:inverse'}
\norm[T(\lambda)^{-1}
     (h_1 + h_2)][L^2(\T\times\R^n)]
\le C(w,z,\lambda_0)(|\lambda - \lambda_0|^{-1}\norm[h_1][]+ \norm[h_2][]).
\end{equation}
In this expression 
$h_1 \in e^{-i\lambda_0 t}\overline{zw}\tilde{N}_{\lambda_0}$, 
and $h_2$ belongs to the $L^2$-orthogonal complement of
$e^{-i\lambda_0 t}\overline{zw} \tilde{N}_{\lambda_0}$.
\end{lemma}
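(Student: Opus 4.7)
The plan is to reproduce the proof of Lemma~\ref{lem:inverse} step by step, substituting the weaker bounds of Proposition~\ref{prop:Uplus'} for those of Proposition~\ref{prop:Uplus} whenever the latter were used. The new hypothesis that $e^{-i\lambda_0 t}N_{\lambda_0}$ and $e^{-i\lambda_0 t}\tilde{N}_{\lambda_0}$ lie in $L^{2n/(n+2)}(\R^n; L^2(\T))+\japanese[x]^{-1}L^2(\R^n\times\T)$ is tailored exactly to absorb the resulting loss of regularity, since the available resolvent estimates~\eqref{eq:UplusKRS}--\eqref{eq:UplusRuizVega} all operate between these function spaces.

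First I would set up the kernel/cokernel structure. By Corollary~\ref{cor:compact}, $T(\lambda_0)$ is a compact perturbation of the identity, so the Fredholm alternative yields a finite-dimensional kernel and cokernel of equal dimension. Any $g\in\ker T(\lambda_0)$ factors as $g=ie^{-i\lambda_0 t}w\phi$ with $\phi=\Uplus wz e^{i\lambda_0 t}g$, and the no-resonance assumption at $\lambda_0$ is precisely what forces $e^{-i\lambda_0 t}\phi\in L^2(\T\times\R^n)$, placing $\phi\in N_{\lambda_0}$ and identifying $\ker T(\lambda_0)=ie^{-i\lambda_0 t}wN_{\lambda_0}$. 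The initial value $\Phi=\phi(0,\cdot)\in L^2(\R^n)$ is produced by the Duhamel averaging argument of Lemma~\ref{lem:inverse}: since $V\phi=wz\cdot w\phi$ with $w\phi\in L^2(\T\times\R^n)$ and $wz\in L^n_xL^\infty_t$, H\"older places $V\phi$ in $L^{2,loc}_t L^{2n/(n+2)}_x$, and the dual of~\eqref{eq:KeelTao} controls the resulting Duhamel integral. The parallel analysis of $T(\lambda_0)^*$, together with the no-resonance hypothesis on $\overline{K}$, identifies the cokernel with $-ie^{-i\lambda_0 t}\overline{zw}\tilde{N}_{\lambda_0}$ and produces $\tilde{\Phi}\in\tilde{X}_{\lambda_0}$.

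Next I would rerun the scalar perturbation argument around $\lambda_0$. Fix $\phi_1\in N_{\lambda_0}$ and $\tilde\phi_1\in\tilde{N}_{\lambda_0}$, set $g_1=e^{-i\lambda_0 t}w\phi_1$ and $h_1=e^{-i\lambda_0 t}\overline{zw}\tilde\phi_1$, and consider $a_{g_1,h_1}(\lambda)=\langle T(\lambda)g_1,h_1\rangle$. This function is holomorphic on the open lower halfplane and vanishes at $\lambda_0$. Applying~\eqref{eq:Uplusdiff} gives
\begin{equation*}
a_{g_1,h_1}(\lambda)=-(\lambda-\lambda_0)\bigl\langle w\bigl(e^{-i\lambda t}\Uplus e^{i\lambda t}\bigr)\bigl(e^{-i\lambda_0 t}\Uplus e^{i\lambda_0 t}\bigr)wzg_1,\,h_1\bigr\rangle.
\end{equation*}
The inner factor $(e^{-i\lambda_0 t}\Uplus e^{i\lambda_0 t})wzg_1$ is a scalar multiple of $e^{-i\lambda_0 t}\phi_1$ by the integral identity defining $N_{\lambda_0}$, so sending $\lambda\to\lambda_0$ and using the adjointness of $\Uplus$ and $\Uminus$ reproduces the formula $a'_{g_1,h_1}(\lambda_0)=2\pi i\langle\Phi_1,\tilde\Phi_1\rangle$ from Lemma~\ref{lem:inverse}. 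The bijectivity hypothesis on the orthogonal projection $X_{\lambda_0}\to\tilde{X}_{\lambda_0}$ then lets me select $\phi_1$ for which this pairing is bounded away from zero, giving $|a_{g_1,h_1}(\lambda)|\gtr|\lambda-\lambda_0|$ in a neighborhood of $\lambda_0$.

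The principal obstacle is the quantitative analogue of~\eqref{eq:a'}: one must show, uniformly for $\lambda$ near $\lambda_0$ in the lower halfplane,
\begin{equation*}
\bigl|\bigl\langle w\bigl(e^{-i\lambda t}\Uplus e^{i\lambda t}\bigr)\bigl(e^{-i\lambda_0 t}\Uplus e^{i\lambda_0 t}\bigr)wzg_1,\,h\bigr\rangle\bigr|\les\|h\|_{L^2},
\end{equation*}
together with the dual bound with the special vector on the right and a generic $g$ on the left. The $L^2\to L^2$ estimate~\eqref{eq:UplusL2} of Proposition~\ref{prop:Uplus} is no longer at my disposal, so I would exploit the containment hypothesis: since $wzg_1$ is proportional to $e^{-i\lambda_0 t}V\phi_1$ with $\phi_1$ in the prescribed hybrid space, the inner $\Uplus$ maps each summand to $L^{2n/(n-2)}(\R^n;L^2(\T))$ or into $\japanese[x]L^2(\T\times\R^n)$ via~\eqref{eq:UplusKRS} or~\eqref{eq:Uplusweighted} respectively. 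Multiplication by $w\in L^n$ and a second application of the outer $\Uplus$ (through Proposition~\ref{prop:Uplus'} or H\"older, according to which summand is being tracked) returns the result to $L^2(\T\times\R^n)$, where it can be paired against $h$. Once these substitute bounds are in place, the remainder of the proof---uniform invertibility of $T(\lambda_0)$ between its co-image and image, construction of the correcting vectors $g_{h_1}(\lambda)$ and $g_{h_2}(\lambda)$, and assembly of $T(\lambda)^{-1}$ as a bounded linear combination---carries over from Lemma~\ref{lem:inverse} verbatim and produces~\eqref{eq:inverse'}.
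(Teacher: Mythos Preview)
Your overall strategy is correct and matches the paper's: reduce to one resolvent via the eigenfunction identity, invoke Proposition~\ref{prop:Uplus'} in place of Proposition~\ref{prop:Uplus}, and then finish as in Lemma~\ref{lem:inverse}. Two points need attention, the second of which is a genuine gap.

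\textbf{The chain of estimates is miswritten.} In the displayed expression there is no factor of $w$ between the two resolvents, so ``multiplication by $w$ and a second application of the outer $\Uplus$'' does not describe the operator. The clean version is the one you already set up two paragraphs earlier: the inner factor $(e^{-i\lambda_0 t}\Uplus e^{i\lambda_0 t})wzg_1$ equals (a scalar multiple of) $e^{-i\lambda_0 t}\phi_1$ by the eigenfunction equation, so the quantity to bound is $w(e^{-i\lambda t}\Uplus e^{i\lambda t})e^{-i\lambda_0 t}\phi_1$. Now the hypothesis places $e^{-i\lambda_0 t}\phi_1$ in the hybrid space, and a \emph{single} application of Proposition~\ref{prop:Uplus'} --- using \eqref{eq:UplusKRS} on the $L^{2n/(n+2)}$ summand and \eqref{eq:UplusRuizVega} (not \eqref{eq:Uplusweighted}) on the $\japanese[x]^{-1}L^2$ summand --- lands in $L^{2n/(n-2)}(\R^n;L^2(\T))$; multiplication by $w\in L^n$ then returns to $L^2$. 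The route through $\japanese[x]L^2$ via \eqref{eq:Uplusweighted} is a dead end here, since $w\japanese[x]$ is not bounded.

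\textbf{The second-order expansion is missing.} Writing ``sending $\lambda\to\lambda_0$ reproduces $a'_{g_1,h_1}(\lambda_0)=2\pi i\langle\Phi_1,\tilde\Phi_1\rangle$'' gives only a limit. To carry over the rest of Lemma~\ref{lem:inverse} ``verbatim'' you need the quantitative bound $|a_{g_1,h'}(\lambda)|\les|\lambda-\lambda_0|^2$ whenever $\langle\Phi_1,\tilde\Phi'\rangle=0$, with a constant uniform in the one-sided neighborhood of $\lambda_0$. In Lemma~\ref{lem:inverse} this came from holomorphy together with the derivative bound~\eqref{eq:a'}, but~\eqref{eq:a'} blows up like $|\Im\lambda|^{-1}$ and is unavailable at the real boundary. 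The paper closes this gap by applying the difference formula~\eqref{eq:Uplusdiff} a second time: after one application and one eigenfunction identification, $a_{g_1,h_1}(\lambda)$ becomes $(\lambda-\lambda_0)\langle e^{-i\lambda_0 t}\phi_1,\,e^{-i\lambda t}\Uminus e^{i\lambda t}\,\overline{w}h_1\rangle$; expanding $e^{-i\lambda t}\Uminus e^{i\lambda t}$ about $\lambda_0$ and using the eigenfunction equation for $\tilde\phi_1$ yields
\[
a_{g_1,h_1}(\lambda)=2\pi i(\lambda-\lambda_0)\langle\Phi_1,\tilde\Phi_1\rangle
+(\lambda-\lambda_0)^2\big\langle e^{-i\lambda_0 t}\phi_1,\;e^{-i\lambda t}\Uminus e^{i\lambda t}\,e^{-i\lambda_0 t}\tilde\phi_1\big\rangle.
\]
The remainder is controlled because the three estimates of Proposition~\ref{prop:Uplus'} together say that $e^{-i\lambda t}\Uminus e^{i\lambda t}$ maps the hybrid space $L^{2n/(n+2)}(\R^n;L^2(\T))+\japanese[x]^{-1}L^2$ into its dual, so the pairing is bounded by the product of the hybrid norms of $\phi_1$ and $\tilde\phi_1$. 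This is precisely where the hypotheses on \emph{both} $N_{\lambda_0}$ and $\tilde N_{\lambda_0}$ are used simultaneously, and without it the finite-dimensional inversion on $\coker T(\lambda_0)$ cannot be carried out.
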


\begin{proof}
As in Lemma~\ref{lem:inverse}, one determines that each
$g \in \ker T(\lambda_0)$ is associated with an eigenfunction
$\phi \in N_{\lambda_0}$ by the relations $\phi = \Uplus e^{i\lambda_0 t}wg$
and $g = ie^{-i\lambda_0 t}zw\phi$.  Because the available
estimate~\eqref{eq:UplusKRS} for $\Uplus$ does not map into
$L^2(\R^n\times\T)$, the extra assumption that $\lambda_0$ is not a resonance
is required in order to place $\phi \in N_{\lambda_0}$.  It then follows that
$\ker T(\lambda_0) = e^{i\lambda_0 t}w N_{\lambda_0}$ and 
$\coker T(\lambda_0) = e^{-i\lambda_0 t}\overline{zw}\tilde{N}_{\lambda_0}$.  

The next step is again to evaluate $T(\lambda)^{-1}h_1$ for $h_1 \in
\coker T(\lambda_0)$ using the function $a_{g,h}(\lambda) = 
\la T(\lambda)g, h\ra$ as a guide.  While $a_{g,h}(\lambda)$ is holomorphic
inside the lower halfplane, in general one expects it to be merely continuous
at the boundary, based on Corollary~\ref{cor:compact}.

Better behavior occurs locally if $h \in \coker T(\lambda_0)$.
Choose any $h_1 = e^{-i\lambda_0 t} \overline{zw}\tilde{\phi}_1$,
$\tilde{\phi}_1 \in \tilde{N}_{\lambda_0}$.  By construction, 
$a_{g,h_1}(\lambda_0) = 0$, and the statements in Proposition~\ref{prop:Uplus'}
imply the local Lipschitz bound
\begin{align*}
|a_{g,h_1}(\lambda)| &= |\lambda - \lambda_0| |\la wzg, e^{-i\lambda t}\Uminus
 e^{i(\lambda - \lambda_0) t}\tilde{\phi}_1\ra| \\
&\les |\lambda - \lambda_0| \norm[g][L^2(\R^n\times\T)]
 \norm[e^{-\lambda_0 t}\tilde{\phi}_1][L^{\frac{2n}{n+2}}(R^n; L^2(\T))
    + \la x\ra^{-1}L^2(\R^n\times\T)]
\end{align*}
for all $\lambda$ in the lower halfplane.  A similar bound holds for
$a_{g_1, h}(\lambda)$, where $g_1 \in \ker T(\lambda_0)$ and $h$ is any
vector in $L^2(\R^n\times\T)$.  We do not claim any differentiability 
unless both $g = e^{-i\lambda_0} w\phi \in \ker T(\lambda_0)$ and
$h_1 \in \coker T(\lambda_0)$.
In that case,
\begin{align*}
a_{g, h_1}(\lambda) &= (\lambda - \lambda_0)\la e^{-i\lambda_0 t}\phi,
  e^{-i\lambda t}\Uminus e^{i\lambda t}\overline{w} h_1 \ra \\
&= i(\lambda - \lambda_0)\la \phi, \tilde{\phi}_1 \ra 
 \ + \ (\lambda - \lambda_0)^2 \la e^{-i\lambda_0 t}\phi, 
  e^{-i\lambda t}\Uminus e^{i(\lambda - \lambda_0)t} \tilde{\phi}_1 \ra \\
&= 2\pi i(\lambda - \lambda_0)
   \la \Phi, \tilde{\Phi}_1\ra_{L^2_x} \\
&\hskip .2in
   + {\mathcal O}\big(|\lambda - \lambda_0|^2\norm[e^{-i\lambda_0 t}\phi][]
    \norm[e^{-i\lambda_0 t}\tilde{\phi}_1][] \big).
\end{align*}
The norms in the last line can be taken with respect to
$L^{\frac{2n}{n+2}}(\R^n; L^2(\T)) + \japanese[x]^{-1}L^2(\R^n\times\T)$,
since $e^{-i\lambda t}\Uminus e^{i\lambda t}$ maps this space to its dual
(see Proposition~\ref{prop:Uplus'}).
Once again the finite dimensionality of $N_{\lambda_0}$ and 
$\tilde{N}_{\lambda_0}$ makes every norm space for 
$e^{-i\lambda_0 t}\phi$ equivalent to $\norm[g][L^2(\R^n\times\T)]$
and similarly for $\tilde{\phi}_1$ and $h_1$.

If the projection of $X_{\lambda_0}$ onto $\tilde{X}_{\lambda_0}$ is
bijective, then for a fixed unit vector $h_1 \in \coker T(\lambda_0)$
there exists a unique unit vector $g_1 \in \ker T(\lambda_0)$ with the
properties
\begin{align*}
|\la T(\lambda)g_1, h_1\ra| &\gtr |\lambda - \lambda_0| \\
\bignorm[T(\lambda)g_1\big|_{{\rm image}\,T(\lambda_0)}][] &\les 
|\lambda-\lambda_0| \\
|\la T(\lambda)g_1, h'\ra| &\les |\lambda - \lambda_0|^2
\end{align*}
for all $\lambda$ in a small neighborhood of $\lambda_0$ in the lower
halfplane, and all unit vectors $h'\in \coker T(\lambda_0)$ orthogonal
to $h_1$.

From this point onward one can follow the proof of Lemma~\ref{lem:inverse}
exactly.  By continuity, $T(\lambda)$ is an invertible map between the
co-image and image of $T(\lambda_0)$.  Given $g_1$ with the properties above
there exists a unique $g'(\lambda) \in {\rm coimage}\,T(\lambda_0)$ with 
$\norm[g'(\lambda)][] \les |\lambda - \lambda_0|$ so that
$T(\lambda)(g_1 + g'(\lambda)) \in \coker T(\lambda_0)$.  The combined vector
$g_{h_1}(\lambda) = g_1 + g'(\lambda)$ is still of approximately unit norm and
satisfies 
\begin{equation*}
T(\lambda) g_{h_1}(\lambda) = C_{h_1}(\lambda - \lambda_0)h_1 +
  {\mathcal O}(|\lambda - \lambda_0|^2)
\end{equation*}
with the error lying entirely in $\coker T(\lambda_0)$.  After choosing a 
(finite) basis for $\coker T(\lambda_0)$, the true inverse 
$T(\lambda)^{-1}h_1$ is a linear combination of 
$(\lambda - \lambda_0)^{-1}g_{h_j}(\lambda)$.

The inverse image of $h_2 \in {\rm image}\,T(\lambda_0)$ is first approximated
by considering the restricted operator $T(\lambda): {\rm coimage}\,T(\lambda_0)
\to {\rm image}\,T(\lambda_0)$.  This may produce an error
$h' \in \coker T(\lambda_0)$ which can be removed via a correction of size
proportional to that of $h_2$.
\end{proof}

\begin{corollary} \label{cor:inverse}
Let $V = w^2z$ be a complex potential in $L^{n/2}_xL^\infty_t$.
Suppose the associated Floquet operators $K$ and $\overline{K}$ have no
resonances on the real axis, that condition~\ref{C1} is satisfied at every
real eigenvalue, and condition~\ref{C3} at every eigenvalue.

Then $K$ has finitely many eigenvalues $\lambda_j$, counted with multiplicity,
inside the strip $\lambda \in \Omega^-$.  Similarly,
$\overline{K}$ has only the eigenvalues $\bar{\lambda}_j$ in the reflected
strip $\Omega^+ = \{\bar{\lambda}: \lambda \in \Omega^-\}$.

For all $\lambda \in \Omega^-$,
the action of $T(\lambda)^{-1}$ is governed by the bound
\begin{align} \notag
\norm[T(\lambda)^{-1}g][L^2(\T\times\R^n)] &\les
 \norm[g][L^2(\T\times\R^n)] + \sum_j |1+ i \cot \pi(\lambda - \lambda_j)|
\, \bigg| \int_0^{2\pi}
 \la g, e^{-i\bar{\lambda}t}\overline{zw}\tilde{\phi}_j
    \ra_{L^2_x}\,dt\bigg|
\\
&= \norm[g][L^2(\T\times\R^n)] + \sum_j |1+ i \cot \pi(\lambda - \lambda_j)|
 \,\Big| \Big\la g, e^{-i\bar{\lambda}t}\overline{zw}\tilde{\phi}_j 
    \big|_{t\in[0,2\pi]} \Big\ra_{L^2_tL^2_x} \Big|   \label{eq:MainInverse}
\end{align}
where $\tilde{\phi}_j \in \tilde{N}_{\lambda_j}$ enumerate the
linearly independent eigenvectors of $\overline{K}$ with eigenvalues in 
$\Omega^+$.
\end{corollary}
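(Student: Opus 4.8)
The plan is to combine the Fredholm--analytic facts about $T(\lambda)$ from Corollary~\ref{cor:compact} with the two local inversion lemmas~\ref{lem:inverse} and~\ref{lem:inverse'}. First I would settle the finiteness assertion. By Corollary~\ref{cor:compact} the family $T(\lambda) = I - iw(e^{-i\lambda t}\Uplus e^{i\lambda t})wz$, $\lambda \in \Omega^-$, consists of compact perturbations of the identity, varies continuously in $\lambda$, is a unitary conjugate of itself under $\lambda \mapsto \lambda+1$, and satisfies $\|T(\lambda) - I\| \to 0$ as $\Im[\lambda] \to -\infty$. Fix $M$ with $\|T(\lambda) - I\| < 1$ for $\Im[\lambda] \le -M$, so that $T(\lambda)$ is invertible with $\|T(\lambda)^{-1}\| \le 2$ there; then every $\lambda$ at which $T(\lambda)$ fails to be invertible lies in the compact strip $S = \{\Re[\lambda] \in [0,1),\ -M \le \Im[\lambda] \le 0\}$. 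By the Fredholm alternative such a $\lambda_0$ has $\ker T(\lambda_0) \ne 0$, i.e.\ $K$ has a bound state at $\lambda_0$; Proposition~\ref{prop:Uplus} makes the corresponding bound state a genuine $L^2(\T\times\R^n)$ eigenfunction when $\Im[\lambda_0] < 0$, so off the real axis the singular points are eigenvalues of $K$ (equivalently, $\overline K$ has an eigenvalue at $\bar\lambda_0$), while on the real axis they are eigenvalues or resonances. The hypothesis rules out real resonances; Lemma~\ref{lem:inverse} (when $\Im[\lambda_0] < 0$, via~\ref{C3}) and Lemma~\ref{lem:inverse'} (when $\lambda_0 \in \R$, via the absence of resonances together with~\ref{C1} and~\ref{C3}) then show $T(\lambda)$ is invertible on a punctured neighborhood of $\lambda_0$. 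Hence the singular set is discrete inside the compact set $S$, so finite; these are the $\lambda_j$. Because $\ker T(\lambda_j)$ and $\coker T(\lambda_j) = e^{-i\bar\lambda_j t}\overline{zw}\tilde N_{\lambda_j}$ are simultaneously nontrivial of equal dimension, the eigenvalues of $\overline K$ in $\Omega^+$ are exactly the reflections $\bar\lambda_j$, with the same multiplicities.

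For the resolvent bound I would localize. Choose pairwise disjoint disks $D_j = \{|\lambda - \lambda_j| < \rho_j\} \subset \Omega^-$, small enough that Lemma~\ref{lem:inverse} or~\ref{lem:inverse'} applies on each and that the integer translates $D_j + m$ remain disjoint. On the closed complement of $\bigcup_j (D_j + \Z)$ inside $\{-M \le \Im[\lambda] \le 0\}$ --- compact modulo $\Z$ --- the map $\lambda \mapsto T(\lambda)^{-1}$ is continuous and so bounded, and each $|1 + i\cot\pi(\lambda - \lambda_j)|$ is bounded there too (its only singularities are the poles at $\lambda_j + \Z$); hence~\eqref{eq:MainInverse} holds there with only its leading term $\norm[g][L^2(\T\times\R^n)]$. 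It therefore suffices to treat $\lambda \in D_j$ for one fixed $j$: the estimate on each translate $D_j + m$ follows since $T(\lambda+1)$ is a unitary conjugate of $T(\lambda)$, $\cot\pi(\cdot)$ is $1$-periodic, and the test function $e^{-i\bar\lambda t}\overline{zw}\tilde\phi_j$ transforms compatibly under $\lambda \mapsto \lambda+1$.

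On $D_j$ I would write $g = h_1 + h_2$, with $h_1$ the $L^2(\T\times\R^n)$-orthogonal projection of $g$ onto $\coker T(\lambda_j)$ and $h_2 \perp h_1$. Lemma~\ref{lem:inverse} (or~\ref{lem:inverse'} when $\lambda_j \in \R$) gives $\|T(\lambda)^{-1}g\| \le C(w,z,\lambda_j)\big(|\lambda - \lambda_j|^{-1}\|h_1\| + \|g\|\big)$. Taking an $L^2(\R^n)$-orthonormal basis $\{\tilde\Phi_j\}$ of $\tilde X_{\lambda_j}$ and invoking the norm equivalences on the finite-dimensional spaces $\tilde N_{\lambda_j}$, $\tilde X_{\lambda_j}$ (the analogue of the remark after Lemma~\ref{lem:inverse}), one gets $\|h_1\| \les \sum_j |\la g,\, e^{-i\bar\lambda_j t}\overline{zw}\tilde\phi_j\ra_{L^2(\T\times\R^n)}|$. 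Since $1 + i\cot\pi(\lambda - \lambda_j) = \frac{2}{1 - e^{2\pi i(\lambda - \lambda_j)}}$ has on $D_j$ a simple pole at $\lambda_j$ with residue $i/\pi$ and no other singularity, $|\lambda - \lambda_j|^{-1} \les |1 + i\cot\pi(\lambda - \lambda_j)|$ there; and replacing $e^{-i\bar\lambda_j t}$ by $e^{-i\bar\lambda t}$ changes each inner product by $O(|\lambda - \lambda_j|)\|g\|$, which after multiplication by the $|\lambda - \lambda_j|^{-1}$-sized factor is absorbed into the leading term. Assembling these over all $j$ and their translates yields~\eqref{eq:MainInverse}.

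The main obstacle is this last conversion --- passing from the local scaling $\|T(\lambda)^{-1}g\| \les |\lambda - \lambda_j|^{-1}\|h_1\| + \|g\|$ supplied by Lemmas~\ref{lem:inverse} and~\ref{lem:inverse'} to a single clean bound with prefactor $1 + i\cot\pi(\lambda - \lambda_j)$ valid on all of $\Omega^-$. Two constraints force the cotangent and must be checked: its poles must sit precisely at the genuine integer-translated singular set $\lambda_j + \Z$ of $T(\lambda)^{-1}$ and carry the correct residue, and it must decay as $\Im[\lambda] \to -\infty$ to be compatible with $\|T(\lambda)^{-1}\| \le 2$ deep in the lower half-plane --- a property that $1 + i\cot\pi z = \frac{2}{1 - e^{2\pi iz}}$ enjoys but a bare $\cot$ or $\csc$ does not. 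Verifying that the local estimates near the translates glue coherently under $\lambda \mapsto \lambda+1$, and that no spurious $\lambda$-dependent factor survives in the coefficient, is where the real work lies; Fredholm theory, continuity of $T(\lambda)^{-1}$ off the singular set, and the finite-dimensional norm equivalences are already in hand.
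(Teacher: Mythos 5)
Your argument is correct and follows the same route as the paper: Corollary~\ref{cor:compact} for continuity, compactness, periodicity, and decay; Lemmas~\ref{lem:inverse} and \ref{lem:inverse'} to get discreteness (hence finiteness) of the singular set and the local pole structure; then the identity $1 + i\cot\pi z = 2/(1-e^{2\pi i z})$ to glue the local $|\lambda-\lambda_j|^{-1}$ bounds into a single $\Z$-periodic estimate that decays as $\Im[\lambda]\to-\infty$, with the substitution of $e^{-i\bar\lambda t}$ for $e^{-i\bar\lambda_j t}$ costing only an $O(|\lambda-\lambda_j|)\norm[g][]$ error absorbed into the main term. This is the paper's proof.
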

\begin{proof}
The continuity and norm-decay properties of Corollary~\ref{cor:compact}
imply that $T(\lambda)^{-1}$ is invertible for all $\lambda$ in an open
subset of $\Omega^-$, with uniform bounds once $\Im[\lambda]$ is sufficiently
large.  Its complement is therefore compact in $\Omega^-$. 
If conditions~\ref{C1} and ~\ref{C3} are satisfied, then 
Lemmas~\ref{lem:inverse} and~\ref{lem:inverse'} show that the complement is
discrete as well, making it a finite set.  At each point where 
$T(\lambda)^{-1}$ fails to exist, the corresponding eigenvalues of $K$ and
$\overline{K}$ have finite multiplicity as a consequence of the Fredholm
Alternative.

For the quantitative statement, first recall that $T(\lambda+1) =
e^{-i t}T(\lambda)e^{i t}$.  This makes $\norm[T(\lambda)^{-1}][]$ periodic
with respect to integer translations.  A finite number of local statements
such as~\eqref{eq:inverse} and~\eqref{eq:inverse'} is sufficient to completely
categorize the singularities of $T(\lambda)^{-1}$ in the entire lower
halfplane.

The conclusion~\eqref{eq:MainInverse} rewrites these local bounds
to make them periodic and gathers them into a finite sum.  For example 
the single pole $(\lambda - \lambda_j)^{-1}$ is replaced with a cotangent
function.  The alterations to the inner product are designed to express
projection onto the cokernel of $T(\lambda)$ as a periodic operation. 
Note that $\coker T(\lambda+1) = e^{-i t}\coker T(\lambda)$
for every $\lambda$, and $N_{\lambda+1} = N_{\lambda}$ exactly.
In the neighborhood of $\lambda_j$ we have the estimate
\begin{equation*}
\int_0^{2\pi} \la g, e^{-i(\bar{\lambda}-\bar{\lambda}_j)t} 
  h \ra_{L^2_x}\, dt 
= \la g, h\ra_{L^2(\T\times\R^n)} + {\mathcal O}(|\lambda - \lambda_j|)
  \norm[g][2]\norm[h][2]
\end{equation*}
and it is bounded everywhere by $(1 + e^{2\pi\Im[\lambda_j - \lambda]})
 \norm[g][2]\norm[h][2]$.  Choosing a specific unit vector $h_j$ gives us
\begin{align*}
|1+i\cot \pi(\lambda - \lambda_j)|
 &\bigg|\int_0^{2\pi} \la g, e^{-i(\bar{\lambda}-\bar{\lambda}_j)t}
  h_j(t,\,\cdot\,) \ra_{L^2_x}\, dt \bigg| \\
&= \sup_{m\in\Z} \frac{1}{\pi|\lambda - (\lambda_j + m)|}
   \big|\la g, e^{-imt}h_j  \ra_{L^2(\T\times\R^n)}\big|
  + {\mathcal O}(\norm[g][])
\end{align*}
in each neighborhood of $\lambda_j + \Z$ and it is bounded by $\norm[g][]$
over the remainder of $\Omega^-$.  (To construct the global bound
we have used the fact that $|1 + i\cot \pi(\lambda)| \sim
e^{2\pi\Im[\lambda]}$ as $\Im[\lambda] \to -\infty$.)  Taking 
$h_j = e^{-i\bar{\lambda}_j t}\overline{zw}\tilde{\phi}_j$, the expression
in~\eqref{eq:MainInverse} is seen to possess the same poles 
as~\eqref{eq:inverse} and~\eqref{eq:inverse'} near each point
$\lambda_j + \Z$ and the appropriate global bound away from these
singularities.
\end{proof}

\section{Proof of Theorem \ref{thm:main}}

Based on the solution formula~\eqref{eq:Duhamel}, it suffices to show that
$(I -iw\Uplus wz)^{-1}w\Uplus f \in L^2_tL^2_x$, with support on the time
halfline $t \in [0,\infty)$.  The method of choice is suggested 
by~\eqref{eq:goal}, namely to demonstrate the finiteness of
\begin{align*}
\sup_{\mu \le 0}\norm[e^{\mu t}(I - iw\Uplus wz)^{-1}w\Uplus f][L^2_tL^2_x]^2
 &= \sup_{\mu \le 0}\int_0^1 \norm[T(\lambda'+i\mu)^{-1}
  e^{-i(\lambda'+i\mu)t}P_{\lambda' + i\mu} w\Uplus f][L^2(\T\times\R^n)]^2\,
 d\lambda' \\
 &= \sup_{\mu \le 0}\int_0^1 \norm[T(\lambda'+i\mu)^{-1}e^{-i\lambda' t}
P_{\lambda'} e^{\mu t}w\Uplus f][L^2(\T\times\R^n)]^2\, d\lambda'.
\end{align*}
%Let $g(\lambda') = e^{-i\lambda' t}P_{\lambda'}e^{\mu t}w\Uplus f$.
Using the inequality~\eqref{eq:MainInverse} to control the behavior of
$T(\lambda'+i\mu)^{-1}$, we are left to show that
\begin{align}
\int_0^1 &\norm[P_{\lambda'}e^{\mu t}w\Uplus f][2]^2\,d\lambda' \notag \\
&+ \sum_j \int_0^1 |1 + i\cot \pi(\lambda' - \lambda_j' +i(\mu - \mu_j))|^2
\,\Big| \Big\la e^{\mu t}\Uplus f, 
 P_{\lambda'}\big(e^{-\mu t}\overline{V}\tilde{\phi}_j\big|_{t\in[0,2\pi]}\big)
   \Big\ra_{L^2_tL^2_x} \Big|^2\, d\lambda' \les \norm[f][2]^2   
\label{eq:newgoal}
\end{align}
uniformly in $\mu \le 0$.  To write things in this form we have taken advantage
of the facts that $P_{\lambda'}$ is self-adjoint on $L^2_tL^2_x$ and commutes
with pointwise multiplication by $w(x)$.

The first integral above is exactly 
$\norm[e^{\mu t}w\Uplus f][L^2_tL^2_x]^2 \les \norm[f][2]$ 
as a result of the Plancherel identity~\eqref{eq:Plancherel} and the
free Strichartz inequality~\eqref{eq:KeelTao}.  The second integral appears
more complicated, but it is also evaluated (separately for each $j$) using
Plancherel's identity in the $\lambda'$ variable.
Designate by $b_{j,\mu}(\lambda')$ the function
\begin{equation} \label{eq:b}
b_{j,\mu}(\lambda') =
 \big[1+i\cot \pi(\lambda' - \lambda_j' +i(\mu - \mu_j))\big]
  \Big\la e^{\mu t}\Uplus f, P_{\lambda'}\big(e^{-\mu t}\overline{V}
  \tilde{\phi}_j\big|_{t\in[0,2\pi]}\big) \Big\ra_{L^2_x L^2_t}.
\end{equation}
The desired bound~\eqref{eq:newgoal} is achieved by showing that
\begin{equation*}
\|b_{j,\mu}\|_{L^2([0,1])} \le C_j\norm[f][2] 
\end{equation*}
for each $j$ and all $\mu \le 0$.

Let $k \in \Z$ be the Fourier variable dual to $\lambda'$.  Given any function
$g \in L^2_tL^2_x$ and a multiplier $M(\lambda')$, the inverse Fourier 
transform of $M(\lambda')P_{\lambda'}g$ has the form
\begin{equation*}
 (MP_{\lambda'})\check{\phantom{i}}g(k,t,x) = \sum_{m\in \Z}
 \check{M}(k-m)g(t+2\pi m,x).
\end{equation*}
Integration inside the infinite sum is justified in the same manner as the
Fourier inversion formula.  The fact that $P_{\lambda'}$ resides in the
conjugate-linear half of an inner product creates some minor bookkeeping
issues.  When we wish to find the inverse Fourier transform of a function
$B(\lambda') = M(\lambda')\la F, P_{\lambda'}g\ra$, the end result is instead
\begin{equation*}
\check{B}(k) = \sum_{m\in\Z} \la F, \overline{\check{M}(k+m)}g(t+2\pi m,x)\ra.
\end{equation*}
The multiplier of interest, $M(\lambda') = 1 + i\cot \pi(\lambda'-\lambda_j'
 + i(\mu - \mu_j))$, has as its inverse Fourier transform
\begin{equation} \label{eq:Mhat}
\check{M}(k) = \big(e^{2\pi i\lambda_j'}e^{2\pi(\mu-\mu_j)}\big)^k \times
\begin{cases}
 -2\big|_{k \ge 1} \ \ &{\rm if}\ 
 \mu \le \mu_j \\
  2\big|_{k \le 0} \ \  &{\rm if}\
 \mu > \mu_j
 \end{cases}
\end{equation}
We have chosen to handle the case $\mu = \mu_j$ by analytic continuation
from $\mu < \mu_j$ rather than as a principal value. 
For our purposes the distinction is
irrelevant, as the inner product in~\eqref{eq:newgoal} will be made to
vanish wherever there is a singularity of the cotangent function.

We are now prepared to evaluate $\norm[b_{j,\mu}][2]$. 
First consider the case $\mu \le \mu_j$.  Applying
the top line from~\eqref{eq:Mhat} to the function
$g(t,x) = e^{-\mu t}\overline{V}\tilde{\phi}_j \chi_{t\in [0,2\pi]}$ and
recalling the periodicity relation for $\tilde{\phi}_j$ yields
\begin{equation*}
\check{B}(k) = -2\big(e^{-2\pi i\lambda_j'}e^{2\pi(\mu- \mu_j)}\big)^k
 \big\la F, e^{-\mu t}\overline{V}\tilde{\phi}_j \big|_{t \le 2\pi k} \big\ra.
\end{equation*}
After substituting $F(t,x) = e^{\mu t}\Uplus f$ into this expression,
Plancherel's identity tells us that 
\begin{equation}
\|b_{j,\mu}\|_{L^2([0,1])}^2 =
\sum_{k \in \Z} 4e^{4\pi(\mu - \mu_j)k} \Big|\Big\la f, \Uminus \big(
   \overline{V}\tilde{\phi}_j \big|_{t \le 2\pi k}\big)(0,\,\cdot\,)
  \Big\ra_{L^2(\R^n)}\Big|^2.
\end{equation}

The support of $\Uminus(\overline{V}\tilde{\phi}_j\big|_{t \le 2\pi k})$ is
contained within the time interval $t \in (-\infty, 2\pi k]$, therefore the
inner product vanishes for each $k \le 0$ (It vanishes when $k=0$ because of
local $L^2$ continuity).  For each $k \ge 1$ we use the eigenvector property
$\tilde{\phi}_j = \Uminus \overline{V}\tilde{\phi}_j$ and the periodicity of
$e^{-i\bar{\lambda}_j t} \tilde{\phi}_j$ to assert that
\begin{align*}
\Uminus \big(\overline{V}\tilde{\phi}_j\big|_{t \le 2\pi k}\big)(0,\,\cdot\,)
 &= \Big[\Uminus(\overline{V}\tilde{\phi}_j) 
        - \Uminus\big(\overline{V}\tilde{\phi}_j\big|_{t > 2\pi k}\big)\Big]
    (0,\,\cdot\,) \\
 &= \tilde{\Phi}_j - e^{2\pi i \bar{\lambda}_j k}e^{2\pi ik\Delta}
    \tilde{\Phi}_j
\end{align*}
with the conclusion
\begin{align*}
\|b_{j,\mu}\|_{L^2([0,1])}^2 &\le 8\sum_{k \ge 1} 
  e^{4\pi (\mu - \mu_j)k} |\la f, \tilde{\Phi}_j\ra|^2
  + 8\sum_{k \ge 1} e^{4\pi \mu k}|\la f, e^{2\pi ik\Delta}\tilde{\Phi}_j\ra|^2
\\
&\les |\mu - \mu_j|^{-1} |\la f, \tilde{\Phi}_j\ra|^2
  + |\mu| ^{-1} \norm[f][2]^2 \norm[\tilde{\Phi}_j][2]^2.
\end{align*}
If $\mu_j < 0$, then we have shown that $\norm[b_{j,\mu}][] 
\les |\mu_j|^{-1/2}\norm[f][]$
for all $f \in L^2(\R^n)$ orthogonal to $\tilde{\Phi}_j$ and all
$\mu \le \mu_j$.  The extra assumption~\ref{C2} is unnecessary in this case.

The calculations are more delicate when $\mu_j = 0$ because the unitarity of
$e^{2\pi ik\Delta}$ on $L^2$ does not provide a satisfactory estimate of
the inner product.  In its place we use the bound
\begin{equation} \label{eq:discreteKato}
\sum_{k\in\Z} |\la e^{-2\pi ik\Delta}f, \psi\ra|^2 \les
 \norm[f][2]^2 \|\psi\|_{\japanese[x]^{-1}L^2 + W^{1,2n/(n+2)}}^2,
\end{equation}
which is proved as Lemma~\ref{lem:discreteKato} in the last section.  This
is essentially a discrete-time version of more familiar Kato smoothing
estimates
\begin{equation*}
\int_\R |\la e^{-it\Delta}f, \psi\ra|^2\, dt \les
 \norm[f][2]^2 \|\psi\|_{\japanese[x]^{-1}L^2 + L^{2n/(n+2)}}^2
\end{equation*}
gathered from~\cite{Simon} and~\cite{RV}.
It is worth re-iterating that $\tilde{\Phi}_j$ has approximately unit norm
in any space that contains the finite-dimensional subspace
$\tilde{X}_{\lambda_j}$.

The remaining case $\mu_j < \mu \le 0$ is treated similarly.  The same sequence
of computations using the appropriate case of~\eqref{eq:Mhat} leads to the
identity
\begin{equation*}
\|b_{j,\mu}\|_{L^2([0,1])}^2 =
\sum_{k \in \Z} 4e^{4\pi(\mu - \mu_j)k} \Big|\Big\la f, \Uminus \big(
   \overline{V}\tilde{\phi}_j \big|_{t \ge 2\pi k}\big)(0,\,\cdot\,)
  \Big\ra_{L^2(\R^n)}\Big|^2.
\end{equation*}
This time the properties of $\tilde{\phi}_j$ simplify the inner product so that
\begin{align*}
\|b_{j,\mu}\|_{L^2([0,1])}^2 &= 
 4 \sum_{k \le 0} e^{4\pi(\mu - \mu_j)k}|\la f, \tilde{\Phi}_j\ra|^2
  + 4\sum_{k\ge 1} e^{4\pi \mu k} |\la f, e^{2\pi ik\Delta}\tilde{\Phi}_j\ra|^2
\\
&\les |\mu-\mu_j|^{-1}|\la f, \tilde{\Phi}_j\ra|^2 + \norm[f][2]^2
   \|\tilde{\Phi}_j\|_{\japanese[x]^{-1}L^2 + W^{1,2n/(n+2)}}^2.
\end{align*}
This concludes the proof of Theorem~\ref{thm:main}, with the exception of
the technical lemmas whose proofs are postponed until the last section.

\section{Fourier Analysis}
In this section we prove the various technical estimates employed during the
proof of Theorem~\ref{thm:main}.  A recurring theme will be the use of
Fourier restriction theorems, with particular emphasis on whether the
restriction to a sphere varies smoothly with respect to changes in radius.

\begin{lemma} \label{lem:RminusL2Lp}
The resolvents $\Rminus(\tau)$ observe the following inequality
\begin{equation} \tag{\ref{eq:RminusL2Lp}}
\norm[\Rminus(\tau)\psi][\frac{2n}{n-2}] \les
\norm[\japanese[x]\psi][2]
\end{equation}
with a constant that is uniform over the closed halfplane $\Im[\tau] \le 0$.
\end{lemma}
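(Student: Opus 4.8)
The plan is to treat \eqref{eq:RminusL2Lp} as a weighted-$L^2$ companion of the uniform Sobolev inequality of Kenig--Ruiz--Sogge, and to prove it by the same mechanism used there and in~\cite{RV}: decompose $\Rminus(\tau)$ into a ``non-resonant'' part, dominated by a fixed Riesz-type kernel, and a thin spectral shell around the characteristic sphere $|\xi|=|\Re[\tau]|^{1/2}$, which is a superposition of Fourier restriction--extension operators whose fibers vary Lipschitz-continuously with the radius.

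First I would reduce to $|\tau|\sim1$. The dilation identity $\Rminus(r^2\sigma)=r^{-2}D_r\Rminus(\sigma)D_{1/r}$, with $D_sf(x)=f(sx)$ and $|\sigma|=1$, $\Im[\sigma]\le0$, together with $\|D_sg\|_{\frac{2n}{n-2}}=s^{-\frac{n-2}2}\|g\|_{\frac{2n}{n-2}}$ and $\|\japanese[x]D_{1/r}\psi\|_2^2=r^n\int(1+r^2|y|^2)|\psi(y)|^2\,dy\le r^{n+2}\|\japanese[x]\psi\|_2^2$ (valid for $r\ge1$), reduces the case $|\tau|\ge1$ to the unit-size case. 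For $|\tau|\le1$ the dilation loses a power, so I would instead use that $\{\Rminus(\tau):|\tau|\le1,\ \Im[\tau]\le0\}$ is norm-continuous up to $\tau=0$ (by the same Hankel-function estimates as in Proposition~\ref{prop:resolvent}) and that the endpoint already obeys the bound: $\|\japanese[x]\psi\|_2\ge\||x|\psi\|_2\gtr\|\psi\|_{\dot H^{-1}}$ by the dual Hardy inequality, while $\Rminus(0)=(-\Lap)^{-1}\colon\dot H^{-1}\to\dot H^1\einbet L^{\frac{2n}{n-2}}$.

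For the core range, write $\tau=\lambda^2$ with $|\lambda|\sim1$ and split the Fourier multiplier $(|\xi|^2-\lambda^2-i0)^{-1}$ into the part supported on $\big||\xi|-\lambda\big|\ge\delta$ and the part supported on $\big||\xi|-\lambda\big|<\delta$, for a small fixed $\delta$. On the first piece the multiplier is smooth, bounded, and $O(|\xi|^{-2})$ at infinity, all uniformly in $\lambda$, so its kernel is dominated by $|x-y|^{2-n}\chi_{|x-y|<1}$ plus a rapidly decaying tail; since this kernel lies in $L^{n/(n-1)}(\R^n)$, Young's inequality yields a uniform $L^2\to L^{\frac{2n}{n-2}}$ bound, which more than suffices (only $\|\psi\|_2\le\|\japanese[x]\psi\|_2$ is used). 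For the shell piece I would use the polar-coordinate expansion $\Rminus(\lambda^2)\psi=c\int\frac{\rho^{n-1}}{\rho^2-\lambda^2-i0}\,\mathcal E_\rho\mathcal R_\rho\psi\,d\rho$, where $\mathcal R_\rho\psi=\hat\psi|_{\rho S^{n-1}}$ and $\mathcal E_\rho$ is the adjoint (extension) operator; restricted to the shell this is an integral over $\rho\in(\lambda-\delta,\lambda+\delta)$. The output bound $\mathcal E_\rho\colon L^2(d\sigma)\to L^{\frac{2n}{n-2}}$ is the dual of the Stein--Tomas restriction theorem (applicable since $\frac{2n}{n+2}<\frac{2(n+1)}{n+3}$), and the input bound $\mathcal R_\rho\colon\japanese[x]^{-1}L^2\to L^2(d\sigma)$ is the Agmon--H\"ormander trace lemma (which asks only for the weight $\japanese[x]^{1/2+\eps}$), both uniform for $\rho\sim1$. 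The delta component $i\pi\lambda^{n-2}\mathcal E_\lambda\mathcal R_\lambda$ is then bounded by $\|\japanese[x]\psi\|_2$ at once, and the principal-value component is handled by observing that $\rho\mapsto\mathcal E_\rho\mathcal R_\rho\psi$ is Lipschitz from a neighborhood of $\lambda$ into $L^{\frac{2n}{n-2}}$ — differentiation in $\rho$ costs only one extra factor of $x$ acting on $\psi$, exactly what the weight $\japanese[x]$ provides — so that $\mathrm{p.v.}\int f(\rho)(\rho-\lambda)^{-1}\,d\rho$ is controlled in terms of $\|f\|_{C^{0,1}}$.

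The main obstacle is this Lipschitz-in-$\rho$ estimate for $\mathcal E_\rho\mathcal R_\rho\psi$ in $L^{\frac{2n}{n-2}}$, uniformly for $\rho$ near $\lambda$: one must arrange that the extra factor of $x$ produced by $\partial_\rho$ is absorbed by $\japanese[x]$ on the input side rather than demanded as additional decay of the output. This is precisely the ``restriction to a sphere varying smoothly with the radius'' phenomenon highlighted at the start of this section, and it is the step where the argument runs parallel to Theorem~3.1 of~\cite{RV} (the same inequality with $\japanese[x]^{-1/2-\eps}\dot H^{-1/2}$ in place of $\japanese[x]^{-1}L^2$). If a slicker route is wanted, one may instead try to factor $\Rminus(\tau)$ through unweighted $L^2$ by combining the $\japanese[x]^{-1}\Rminus(\tau)\japanese[x]^{-1}$ bound of~\cite{Simon} with the $L^{\frac{2n}{n+2}}\to L^{\frac{2n}{n-2}}$ bound of~\cite{KRS}, though this appears to require a delicate choice of intermediate weight, so I would carry the restriction-theoretic proof as the primary argument.
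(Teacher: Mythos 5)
Your overall strategy mirrors the paper's: decompose $\Rminus(\tau)$ into a non-resonant piece (dominated by a fixed Riesz-type kernel) plus a thin spectral shell, bound the shell output by Stein--Tomas, and control the input via the weight. The scaling reduction at the front is a stylistic addition that the paper avoids by working uniformly in $\tau$ throughout. Where your argument has a real gap is in the treatment of the principal value.

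You assert that $\rho\mapsto\mathcal{E}_\rho\mathcal{R}_\rho\psi$ is Lipschitz (or at least H\"older) into $L^{2n/(n-2)}$, with the $\rho$-derivative ``costing one factor of $x$ on the input.'' That is not the case, and the obstruction you flag in your last paragraph is fatal, not merely delicate. Differentiating the \emph{extension} operator in $\rho$ produces $\partial_\rho\mathcal{E}_\rho g(x)=(n-1)\rho^{-1}\mathcal{E}_\rho g(x)+i\sum_j x_j\,\mathcal{E}_\rho(\omega_j g)(x)$; the second term carries a factor of $|x|$ on the \emph{output}, and $|x|$ times a Stein--Tomas extension is in general nowhere near $L^{2n/(n-2)}$. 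Meanwhile, differentiating the \emph{restriction} $\mathcal{R}_\rho$ requires a pointwise-in-$\rho$ trace bound for $x_j\psi$, which would need $\japanese[x]^{1/2+\eps}(x_j\psi)\in L^2$ --- strictly more than $\japanese[x]\psi\in L^2$. So the composite $\mathcal{E}_\rho\mathcal{R}_\rho\psi$ enjoys no useful modulus of continuity in $L^{2n/(n-2)}$, and the principal value cannot be dispatched by subtracting $\mathcal{E}_\lambda\mathcal{R}_\lambda\psi$ alone.

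The paper's resolution keeps the two sources of $\rho$-dependence separate. Write $\hat\psi_\rho=(\hat\psi_\rho-\hat\psi_{r_0})+\hat\psi_{r_0}$. For the first piece, only the restriction, viewed as an $L^2(S^{n-1})$-valued function of $\rho$, is shown to be H\"older of order $1/2$; this follows from the Plancherel-type bound
\begin{equation*}
\int_0^\infty r^{n-1}\norm[\partial_r\hat{\psi}_r][L^2(S^{n-1})]^2\,dr\ \les\ \norm[\japanese[x]\psi][2]^2,
\end{equation*}
which gives a square-integrable (hence H\"older-$1/2$, not Lipschitz) radial derivative without any pointwise-in-$\rho$ trace estimate. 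Applying Stein--Tomas to this H\"older difference produces an integrable singularity against $(\rho^2-\lambda)^{-1}$, so the $I_1$ piece is absolutely convergent. The second piece $I_2$, which carries the genuine principal value, is handled without any continuity of $\mathcal{E}_\rho$ whatsoever: one freezes $\hat\psi_{r_0}$, rescales the spatial argument by $r_0/\rho$, and recognizes the $\rho$-integral as a translation-invariant Calder\'on--Zygmund convolution in the log-radial spatial variable (the operator $S$ in the accompanying proposition). Its $L^p$-boundedness, uniform in $\tau$, is exactly what supplies the p.v.\ cancellation your proposal is missing. You should replace your ``Lipschitz in $L^{2n/(n-2)}$'' step with this two-part decomposition, or supply an alternative singular-integral argument in the spatial variable.
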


\begin{proof}
Let $\hat{\psi}_r(\omega) = \hat{\psi}(r,\omega)$ indicate the restriction of
$\hat{\psi}$ to the sphere with radius $r$.  Since we have assumed that
$\japanese[x]\psi \in L^2(\R^n)$, the radial derivative
$\partial_r\hat{\psi}_r(\omega) = \nabla\hat{\psi}(x) \cdot \frac{x}{|x|}$
is square-integrable with respect to spherical coordinates.  Combined with the
convexity of norms, this means
\begin{align} \label{eq:derivative}
\int_0^\infty r^{n-1} \Big(\frac{d}{dr}\big[\norm[\hat{\psi}_r][L^2(S^{n-1})]
  \big] \Big)^2 \,dr &\le
\int_0^\infty r^{n-1} \norm[\partial_r \hat{\psi}_r(\omega)][L^2(S^{n-1})]^2 \,
  dr \\
 &\les \norm[\japanese[x]\psi][2]^2. \notag
\end{align}
The left-hand side is a weighted $L^2$ norm of the derivative of
$\norm[\hat{\psi}_r][]$.
Hardy's inequality (or the Schur test when $n \ge 4$) then gives a weighted
$L^2$ estimate for $\norm[\hat{\psi}][]$ itself,
\begin{equation*}
\int_0^\infty r^{n-3} \norm[\hat{\psi}_r][L^2(S^{n-1})]^2\, dr
  \les \norm[\japanese[x]\psi][2]^2,
\end{equation*}
which is in effect a bound on $\norm[(-\Delta)^{-\frac12}\psi][2]$.
Applying the $L^p$ fractional integration bound for $(-\Delta)^{-\frac12}$
on top of this leads to the conclusion
\begin{equation} \label{eq:HLS}
\norm[\Rminus(0)\psi][\frac{2n}{n-2}]
  = C_n\norm[\psi * |x|^{2-n}][\frac{2n}{n-2}]
  \les \norm[\japanese[x]\psi][2].
\end{equation}
Applying the Cauchy-Schwartz inequality to~\eqref{eq:derivative} gives a
pointwise bound for $\norm[\hat{\psi}_r][]$ instead.
\begin{equation} \label{eq:restriction}
\norm[\hat{\psi}_r][L^2(S^{n-1})] \les r^{1-\frac{n}2}
  \norm[\japanese[x]\psi][2].
\end{equation}

The resolvent $\Rminus(\lambda)$ multiplies Fourier transforms by
$(|\xi|^2-\lambda)^{-1}$.  If $\Re[\lambda] < |\Im[\lambda]|$ then standard
estimates show that the convolution kernel of $\Rminus(\lambda)$ is bounded
pointwise by $|x|^{2-n}$, uniformly in $\lambda$ over this range.  The
conclusion of the lemma is verified by taking absolute values and
applying~\eqref{eq:HLS}.

The case $\Re[\lambda] > |\Im[\lambda]|$ requires more care.  Let $\chi$ be a
smooth function identically equal to 1 on $[\frac12,2]$ and supported on
$[\frac14,4]$.  Decompose the resolvent into two pieces,
\begin{align*}
(R_1\psi)\hat{\,}(\xi) &=
\big(1-\chi\big(|\Re[\lambda]|^{-\frac12}|\xi|\big)\big)
 (|\xi|^2-\lambda)^{-1} \hat{\psi}(\xi) \\
(R_2\psi)\hat{\,}(\xi) &=
  \chi\big(|\Re[\lambda]^{-\frac12}|\xi|\big)(|\xi|^2-\lambda)^{-1}
  \hat{\psi}(\xi)
\end{align*}
The convolution kernel associated to $R_1$ is again controlled pointwise by
$|x|^{2-n}$, making it subject to the same bound as in~\eqref{eq:HLS}.

Each restriction of $\hat{\phi}$ to the sphere radius $r$ makes the
contribution
\begin{equation*}
(\hat{\psi}_r)\check{\phantom{i}}(x) = (2\pi)^{-n} r^{n-1}
   \int_{S^{n-1}}e^{i rx\cdot \omega} \hat{\psi}_r(\omega)\, d\omega
\end{equation*}
toward the original function $\psi$.  Once the normalization is taken into
account, the Stein-Tomas theorem~\cite{Tomas} indicates that
\begin{equation} \label{eq:SteinTomas}
\norm[(\hat{\psi}_r)\check{\phantom{i}}][\frac{2n}{n-2}]
  \les r^{\frac{n}{2}} \norm[\hat{\psi}_r][L^2(S^{n-1})]
\end{equation}

Set $r_0 = |\Re[\lambda]|^{\frac12}$ and write out
$\hat{\psi}_r = (\hat{\psi}_r - \hat{\psi}_{r_0}) + \hat{\psi}_{r_0}$.  This
splits $R_2\psi$ into the sum of two pieces.
\begin{align*}
R_2\psi(x) = &\int_{\frac{r_0}{4}}^{4r_0} \chi\big({\txt \frac{r}{r_0}}\big)
    (r^2 - \lambda)^{-1} \big(\hat{\psi}_r - \hat{\psi}_{r_0}\big)
 \check{\phantom{l}}(x)\, dr \\
  &+ \int_{\frac{r_0}{4}}^{4r_0}  \big({\txt \frac{r}{r_0}}\big)^{n-1}
    \chi\big({\txt \frac{r}{r_0}}\big) (r^2 - \lambda)^{-1}
    (\hat{\psi}_{r_0})\check{\phantom{i}}\big({\txt \frac{r_0}{r}}x\big)\,dr \\
  = &I_1 + I_2.
\end{align*}

For the first integral, \eqref{eq:derivative} shows that
$r^{(n-1)/2}\hat{\psi}_r$, viewed as a $L^2(S^{n-1})$-valued function of $r$,
has a square-integrable weak derivative.  Therefore $\hat{\psi}_r$ is
H\"older-continuous of order $1/2$ in the interval
$r \in [\frac{r_0}4, 4r_0]$, with constant
$r_0^{(1-n)/2}\norm[\japanese[x]\psi][2]$.
Combined with~\eqref{eq:SteinTomas} this shows
\begin{align*}
\norm[I_1][\frac{2n}{n-2}] &\les \Big(\int_{\frac{r_0}{4}}^{4r_0}
  r^{\frac{n}{2}}r_0^{\frac{1-n}2} \frac{|r - r_0|^{1/2}}{|r^2 - \lambda|}
  \, dr\Big) \norm[\japanese[x]\psi][2] \\
 & \les r_0^{1/2} \Big(\int_{\frac{r_0}{4}}^{4r_0}
   \frac{|r - r_0|^{1/2}}{|r^2 - r_0^2|}\, dr\Big)
   \norm[\japanese[x]\psi][2]
 \ \les \ \norm[\japanese[x]\psi][2].
\end{align*}

The primary estimate for $I_2$ is that
\begin{equation*}
\norm[(\hat{\psi}_{r_0})\check{\phantom{i}}][\frac{2n}{n-2}]
 \les r_0\norm[\japanese[x]\psi][2]
\end{equation*}
by virtue of~\eqref{eq:restriction} and~\eqref{eq:SteinTomas}.  After a
suitable change of variables, this function can be transformed into $I_2$
via a singular integral operator that preserves $L^p$ norms.  The proposition
below completes the proof.
\end{proof}

\begin{proposition}
Given the cutoff $\chi$ as defined above and any $\lambda = r_0^2 + i\mu$ with
$|\mu| \le r_0^2$, the operator
\begin{equation*}
Sg = \int_{\frac{r_0}{4}}^{4r_0}  \big({\txt \frac{r}{r_0}}\big)^{n-1}
    \chi\big({\txt \frac{r}{r_0}}\big) (r^2 - \lambda)^{-1}
    g\big({\txt \frac{r_0}{r}}x\big)\,dr 
\end{equation*}
satisfies the bounds $\norm[Sg][p] \le C_p r_0^{-1}\norm[g][p]$
for every $1 < p < \infty$.
\end{proposition}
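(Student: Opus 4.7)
The plan is to use rotational invariance and a logarithmic change of variables to reduce the $L^p$ boundedness of $S$ to a one-dimensional convolution estimate on $\R$, which is then handled by standard Calder\'on--Zygmund theory for shifted Hilbert/Cauchy kernels.

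First, I would substitute $s = r/r_0$ and set $\eta = \lambda/r_0^2 = 1 + i\mu/r_0^2$ (so $\eta$ lies in the compact set $\{1+it : t \in [-1,1]\}$, bounded away from $0$), which transforms the operator into
\[ Sg(x) = r_0^{-1} \int_{1/4}^{4} \frac{s^{n-1}\chi(s)}{s^2 - \eta}\,g(x/s)\,ds =: r_0^{-1}\,T_\eta g(x). \]
The explicit factor $r_0^{-1}$ is thereby accounted for, and it suffices to show $\|T_\eta g\|_p \les \|g\|_p$ uniformly in $\eta$.

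Next, since $T_\eta$ commutes with rotations on $\R^n$, I would pass to polar coordinates $x = e^v\omega$ and substitute $s = e^u$. This converts $T_\eta$ into a one-dimensional convolution in the log-radial variable, fiber-wise in $\omega$:
\[ (T_\eta g)(e^v\omega) = \int \tilde K_\eta(u)\,g_\omega(v-u)\,du, \quad
  \tilde K_\eta(u) := \frac{e^{nu}\chi(e^u)}{e^{2u}-\eta}, \quad g_\omega(v) := g(e^v\omega). \]
Conjugating by the weight $e^{nv/p}$ identifies the $L^p(\R^n)$-norm fiber-wise with the standard $L^p(\R)$-norm via $\|g\|_p^p = \int_{S^{n-1}}\|G_\omega\|_{L^p(\R)}^p\,d\omega$ for $G_\omega := e^{nv/p}g_\omega$, and the problem reduces to showing that convolution by
\[ K_p(u) := \frac{e^{(n+n/p)u}\chi(e^u)}{e^{2u}-\eta} \]
is bounded on $L^p(\R)$ uniformly in $\eta$.

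Finally, I would analyze $K_p$ directly. It is compactly supported in $[-\log 4,\log 4]$ and has a single complex pole at $u_\eta := \tfrac12\log\eta$, which lies in a fixed bounded subset of $\Compl$ by compactness of the $\eta$-parameter. Taylor expansion gives $e^{2u} - \eta = 2\eta(u-u_\eta) + O((u-u_\eta)^2)$, yielding a decomposition
\[ K_p(u) = \frac{a(u)}{u - u_\eta} + b(u) \]
in which $a, b$ are smooth and compactly supported with bounds uniform in $\eta$. Convolution by $b \in L^1(\R)$ is bounded on $L^p$ by Young's inequality. For the singular term, I would subtract $a(u_\eta)$ from $a(u)$: the remainder $(a(u) - a(u_\eta))/(u - u_\eta)$ is bounded with compact support (by smoothness of $a$), leaving the truncated Cauchy kernel $a(u_\eta)\,\chi_0(u)/(u - u_\eta)$ as the only genuinely singular piece. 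The main obstacle is proving that this last convolution is $L^p$-bounded uniformly as $\Im[u_\eta] \to 0$; the key observation is that for $\Im[u_\eta] \ne 0$ one may split $(u - u_\eta)^{-1}$ into its real part (a mollified Hilbert kernel) and its imaginary part (a constant multiple of the Poisson kernel of width $|\Im[u_\eta]|$), while in the limit $\Im[u_\eta] = 0$ the kernel becomes a principal-value truncated Hilbert transform. In all cases, standard Calder\'on--Zygmund theory provides the uniform $L^p$-bound for $1 < p < \infty$.
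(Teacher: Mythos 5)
Your proposal is correct and follows essentially the same strategy as the paper: pass to log-polar coordinates to reduce $S$ to a one-dimensional convolution in the log-radial variable, then apply Calder\'on--Zygmund / Hilbert transform theory to control the singular kernel uniformly in $\mu$. The only notable variation is cosmetic: you conjugate by the Jacobian weight $e^{nv/p}$ so as to work in unweighted $L^p(\R)$, whereas the paper retains the $e^{ns}$ weight and observes that it is comparable to a constant over any interval of bounded length because the convolution kernel is compactly supported.
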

\begin{proof}
Consider the logarithmic spherical coordinates 
$(s,\omega) \in \R\times S^{n-1}$ defined by
$s = \log |x|$ and $\omega = \frac{x}{|x|}$.  The Jacobian
factor transforms the $L^p$ norms according to the rule
\begin{equation*}
\norm[g][p]^p = \int_{S^{n-1}}\int_\R |g(s,\omega)|^p e^{ns}\,ds d\omega  = 
\int_{S^{n-1}} \norm[g(\,\cdot\,, \omega)][L^p(e^{ns}\,ds)]^p \,d\omega.
\end{equation*}
In these coordinates the action of $S$ takes place entirely along the $s$
variable.  Let $\rho = \log(\frac{r}{r_0})$.  Then
\begin{align*}
Sg(s,\omega) &= r_0^{-1}\int_{-\log 4}^{\log 4} e^{n\rho} \chi(e^{\rho}) 
 \frac{g(s - \rho,\omega)}{e^{2\rho} - (1 +i\mu/r_0^2)}\,d\rho \\
 &= r_0^{-1}\, g *
 \left[\frac{e^{n\rho}\chi(e^\rho)}{e^{2\rho} - (1 + \mu/r_0^2)}\right]
  (s,\omega)
\end{align*}
where the convolution takes place in the $s$ variable only.  This is a
Calder\'on-Zygmund singular integral which can be controlled by the Hilbert
transform independently of the value of $\mu$.  The unweighted bounds for
the Hilbert transform apply here (despite the fact that $e^{ns}$ belongs to
no $A_p$ class) because the convolution kernel is supported in $[-2, 2]$
and the exponential function is essentially constant over any interval
of similar length.
\end{proof}

\begin{lemma} \label{lem:discreteKato}
The propagator of the free Schr\"odinger equation obeys the sampling
estimates
\begin{equation} \label{eq:discreteKato'}
\begin{aligned}
\sum_{k\in\Z} |\la e^{-2\pi ik\Delta}f, \psi\ra|^2 &\les
 \norm[f][2]^2\, \|\psi\|_{\japanese[x]^{-1}L^2}^2 \\
\sum_{k\in\Z} |\la e^{-2\pi ik\Delta}f, \psi\ra|^2 &\les
 \norm[f][2]^2\, \|\psi\|_{L^2 \cap L^{2n/(n+2)} \cap
     \dot{W}^{\alpha,2\gamma/(\gamma+2)}}^2,
\end{aligned}
\end{equation}
provided $\gamma \in [\frac{n+1}{2}, n+1]$ and $\alpha + \frac{1}{\gamma} > 1$.
\end{lemma}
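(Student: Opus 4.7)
The plan is to recast the left-hand side as a Plancherel sum over a one-dimensional radial variable and then bound a periodized density. Writing $a_k = \la e^{-2\pi ik\Delta}f, \psi\ra = \int_{\R^n}\hat f(\xi)\overline{\hat\psi(\xi)}e^{2\pi ik|\xi|^2}\,d\xi$ and applying the coarea formula in $t = |\xi|^2$, we see that $a_k = \hat H(-k)$ with
$$H(t) = \chi_{t>0}\tfrac12 t^{(n-2)/2}\la \hat f_{\sqrt t}, \hat\psi_{\sqrt t}\ra_{L^2(S^{n-1})}.$$
Plancherel on $[0,1]$ then gives $\sum_k |a_k|^2 = \|\tilde H\|_{L^2([0,1])}^2$ for the periodization $\tilde H(\tau) = \sum_{m\ge 0} H(\tau+m)$, and Cauchy--Schwarz applied first in the angular inner product and then in $m$ yields $|\tilde H(\tau)|^2 \le \tilde F(\tau)\tilde G(\tau)$, where $\tilde F$, $\tilde G$ are defined analogously using $|\hat f|^2$, $|\hat\psi|^2$. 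Since $\int_0^1 \tilde F(\tau)\,d\tau = \|f\|_2^2$, the lemma reduces to $\sup_\tau \tilde G(\tau) \les \|\psi\|_X^2$.

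Uniform spacing of the sample points $\{\tau+m\}$ yields the Riemann-sum comparison
$$\sup_\tau \tilde G(\tau) \;\le\; \|G\|_{L^1(\R_+)} + \|G'\|_{L^1(\R_+)},$$
where the fluctuations $G(\tau+m) - \int_m^{m+1}G$ are absorbed into $\int_m^{m+1}|G'|$. A change of variables $r=\sqrt t$ shows $\|G\|_{L^1(\R_+)} = \|\psi\|_2^2$, which is dominated by either candidate for $\|\psi\|_X^2$. Computing $G'$ and switching back to $r$ reduces the derivative norm to
$$I := \int_0^\infty r^{n-3}\|\hat\psi_r\|_{L^2(S^{n-1})}^2\,dr, \qquad II := \int_0^\infty r^{n-2}\|\hat\psi_r\|_{L^2(S^{n-1})}\big|\tfrac{d}{dr}\|\hat\psi_r\|_{L^2(S^{n-1})}\big|\,dr,$$
so that $\|G'\|_{L^1} \les I + II$.

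For the weighted estimate, inequality~\eqref{eq:derivative} and Hardy's inequality (both established in the proof of Lemma~\ref{lem:RminusL2Lp}) give $I \les \|\japanese[x]\psi\|_2^2$ and $\int r^{n-1}|\tfrac{d}{dr}\|\hat\psi_r\||^2\, dr \les \|\japanese[x]\psi\|_2^2$, after which Cauchy--Schwarz controls $II$. For the second inequality, $I = \|(-\Delta)^{-1/2}\psi\|_2^2$ is bounded by $\|\psi\|_{L^{2n/(n+2)}}^2$ through Hardy--Littlewood--Sobolev. The main obstacle is estimating the radial-derivative piece of $II$ by $\|\psi\|^2_{\dot W^{\alpha, 2\gamma/(\gamma+2)}}$; my plan here is to decompose dyadically in $r$, apply the Stein--Tomas restriction theorem (with the scaling that yields the correct $r$-weight) on each shell both to $(-\Delta)^{\alpha/2}\psi$ and to its radial first derivative, and then use the gain $\alpha + 1/\gamma > 1$ together with Cauchy--Schwarz to sum the dyadic contributions. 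The admissible range $\gamma \in [(n+1)/2, n+1]$ is exactly the interval on which the dual $L^q \to L^2(S^{n-1})$ restriction bound holds, with the Stein--Tomas endpoint $q = 2(n+1)/(n+3)$ occurring at $\gamma = n+1$.
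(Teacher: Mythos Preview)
Your reduction to $\sup_\tau \tilde G(\tau)$ and the bound $\sup_\tau \tilde G \le \|G\|_{L^1} + \|G'\|_{L^1}$ is sound, and for the weighted inequality your argument via~\eqref{eq:derivative}, Hardy, and Cauchy--Schwarz is essentially the paper's proof recast in slightly different bookkeeping.

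The gap is in the second inequality, where your plan for $II$ does not work as stated. The phrase ``apply Stein--Tomas to its radial first derivative'' suggests controlling $\partial_r\hat\psi_r$, but that quantity is governed by $\japanese[x]\psi$, not by any Sobolev norm of $\psi$; with only $L^p$-type hypotheses on $\psi$ the restrictions $\hat\psi_r$ need not vary continuously in any way controlled by $\|\psi\|$. The paper's point (its Proposition~\ref{prop:restrictions}) is that one should differentiate the $TT^*$ \emph{kernel} rather than the function: writing $\|\hat\psi_s\|^2 = C_n\langle \psi, K(\sqrt s\,\cdot\,)*\bar\psi\rangle$, the $s$-derivative produces the kernel $\tilde K(z) = zK'(z)$, which decays like $\japanese[z]^{-(n-3)/2}$ and therefore satisfies a restriction-type bound $L^{(2n+2)/(n+5)} \to L^{(2n+2)/(n-3)}$. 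Interpolating this with ordinary Stein--Tomas is what produces the range $\gamma \in [\frac{n+1}{2}, n+1]$ together with the decisive factor $m^{-1/\gamma}$ on each unit interval $[m,m+1]$; applying the resulting oscillation bound to $(-\Delta)^{\alpha/2}\psi$ then contributes an extra $m^{-\alpha}$ and makes the sum converge exactly when $\alpha + 1/\gamma > 1$.

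In particular, your explanation of the range of $\gamma$ is not quite right: the dual restriction bound $L^q \to L^2(S^{n-1})$ holds for all $q \le \frac{2n+2}{n+3}$, so Stein--Tomas alone only fixes the upper endpoint $\gamma = n+1$. The lower endpoint $\gamma = \frac{n+1}{2}$ (i.e.\ $q = \frac{2n+2}{n+5}$) comes from the bound for the differentiated kernel $\tilde K$, and without that ingredient there is no mechanism in your outline that converts Sobolev regularity of $\psi$ into decay of the radial variation of $\|\hat\psi_r\|^2$.
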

\begin{proof}
For each $k$ the inner product $\la e^{-2\pi ik\Delta}f, \psi\ra$ represents
the integral
\begin{equation*}
(2\pi)^{-\frac{n}{2}} \int_R^n \hat{f}(\xi) \overline{\hat{\psi}(\xi)}
   e^{2\pi ik|\xi|^2}\,d\xi
\ = \ (2\pi)^{-\frac{n}{2}} \int_0^\infty s^{\frac{n-2}{2}} \int_{S^{n-1}}
 \hat{f}(s,\omega) \overline{\hat{\psi}(s,\omega)} e^{2\pi iks}\, d\omega ds
\end{equation*}
where $(s,\omega)$ are the spherical coordinates $s = |\xi|^2$, $\omega =
\frac{x}{|x|}$.  This in turn describes (up to constants)
the $k{\rm th}$ Fourier coefficient of the periodic function
$\sum_m F(s+m)$, where
\begin{equation*}
F(s) = (s)^{\frac{n-2}{2}} \int_{S^{n-1}}
  \hat{f}(s, \omega) \overline{\hat{\psi}(s, \omega)}\, d\omega.
\end{equation*}
We are therefore concerned with finding conditions on $\psi$ that lead to
$\sum_m F(s+m)$ belonging to $L^2([0,1])$.  It would be sufficient to show
instead that $F \in \ell^1_m(L^2([m, m+1]))$.

Plancherel's identity dictates that $s^{(n-2)/4}\hat{f}(s,\omega)$
is an element of $\ell^2_m L^2([m, m+1]; L^2(S^{n-1}))$.
Bounds of the type~\eqref{eq:discreteKato'} will follow provided that
$s^{(n-2)/4}\hat{\psi}$ belongs to
$\ell^2_m L^\infty([m,m+1]; L^2(S^{n-1}))$.
Taking $\hat{\psi}_s$ to be the restriction of $\hat{\psi}$ to the sphere
$|\xi|^2 = s$, we wish to show that 
\begin{equation*}
\sum_{m\ge 0} \sup_{s \in [m,m+1]} s^{\frac{n-2}{2}} 
   \norm[\hat{\psi}_s][L^2(S^{n-1})]^2
\end{equation*}
is controlled by the norm of $\psi$ in a space of our choosing.

Suppose $\psi \in \japanese[x]^{-1}L^2$.  Changing variables from $r$ to $s$
in~\eqref{eq:derivative} leads to the derivative estimates
\begin{equation} \label{eq:derivative'}
\int_0^\infty s^\frac{n}{2} 
 \Big(\frac{d}{ds}\big[\norm[\hat{\psi}_s][L^2(S^{n-1})]
  \big] \Big)^2 \,ds \les \norm[\japanese[x]\psi][2]^2.
\end{equation}
Local differences are estimated by the mean value theorem and
Cauchy-Schwartz.  For any pair of points $s_1, s_2 \in [m, m+1]$,
\begin{align*}
\Big| \norm[s_1^{(n-2)/4}\hat{\psi}_{s_2}][] - 
   \norm[s_2^{(n-2)/4}\hat{\psi}_{s_1}][]\Big|^2
&\le \int_m^{m+1} 
  \Big(\frac{d}{ds}\big[s^{(n-2)/4}\norm[\hat{\psi}_s][L^2(S^{n-1})]\big]
    \Big)^2 \,ds \\
&\le 
  2\int_m^{m+1} s^{\frac{n-6}{2}}\norm[\hat{\psi}_s][]^2
+ s^{\frac{n-2}{2}}\Big(\frac{d}{ds} \norm[\hat{\psi}_s][]
  \Big)^2\,ds
\end{align*}
The $L^\infty$ norm of a positive function over a unit interval is
controlled by its integral and the variation of its values, hence
\begin{equation}
\begin{aligned}
\sum_{m \ge 1} \sup_{s\in[m,m+1]} 
   s^{\frac{n-2}{2}}\norm[\hat{\psi}_s][]^2
 &\le \sum_{m\ge 1} \int_m^{m+1} \big(s^{\frac{n-2}{2}} + 2s^{\frac{n-6}{2}}
 \norm[\hat{\psi}_s][]^2
 + 2 s^{\frac{n-2}{2}}
  \Big(\frac{d}{ds}\norm[\hat{\psi}_s][] \Big)^2 \,ds \\
 &\les \int_1^\infty s^{\frac{n-2}{2}}\norm[\hat{\psi}_s][]^2\,ds
   +  \int_1^\infty s^{\frac{n}{2}}
  \Big(\frac{d}{ds}\big[\norm[\hat{\psi}_s][] \big] \Big)^2 \,ds \\
 &\les \norm[\psi][2]^2 + \norm[\japanese[x]\psi][2]^2
\end{aligned}
\end{equation}
by Plancherel and~\eqref{eq:derivative'}, respectively.  The supremum over the
interval $s\in [0,1]$ is controlled separately by the estimate
\begin{align*}
s^{\frac{n-2}{4}}\norm[\hat{\psi}_s][] \le s^{\frac{n-2}{4}} 
  \int_s^\infty \Big|\frac{d}{ds}\big[\norm[\hat{\psi}_s][] \big] \Big|\,ds
 &\les \bigg(\int_s^\infty s^{\frac{n}{2}}
   \Big|\frac{d}{ds}\big[\norm[\hat{\psi}_s][] \big] \Big|^2\,ds \bigg)^{1/2}\\
 &\les \norm[\japanese[x]\psi][2]
\end{align*}
which is a combination of Cauchy-Schwartz and~\eqref{eq:derivative'}.

For the second statement, the condition $\psi \in L^{2n/(n+2)}$ is
most important in the interval $s \in [0,1]$ and the Sobolev regularity
condition plays a major role as $s \to \infty$. It is clearly necessary to have
$\psi \in L^2$, otherwise the inner product in~\eqref{eq:discreteKato'}
could be undefined for one or more values of $k$.

The dual statement to~\eqref{eq:SteinTomas}, when normalized with the correct
factor of $r^{n-1}$ indicates that $s^{(n-2)/4}\norm[\hat{\psi}_s][] \les
\norm[\psi][\frac{2n}{n+2}]$ for all $s > 0$.  In particular, the
supremum over $s \in [0,1]$ is bounded in this manner.

The fact that $\psi \in L^2$ implies that $s^{(n-2)/4}\norm[\hat{\psi}_x][]^2$
is integrable.  Controlling its $L^\infty$ norm on a unit interval in terms of
its $L^1$ norm generally requires some degree of continuity.  In the previous
case we were able to infer differentiability of $\hat{\psi}_s$ from the
polynomial weighted decay of $\psi$.  With $\psi$ merely belonging to an $L^p$
space, it may still be true that $\hat{\psi}$ is continuous, but the modulus of
continuity is not determined by $\norm[\psi][]$ alone.  We exploit the
observation (also used in~\cite{GS2}), that the norm of $\hat{\psi}_s$
varies smoothly even when the restrictions themselves do not.

\begin{proposition} \label{prop:restrictions}
Let $\gamma \in [\frac{n+1}{2}, n+1]$.  The Fourier restrictions of 
$\psi \in L^{\frac{2\gamma}{\gamma+2}}(\R^n)$ satisfy the continuity bound
\begin{equation} \label{eq:restrictions}
m^{\frac{n-2}{2}}
  \big(\norm[\hat{\psi}_{s_1}][]^2 - \norm[\hat{\psi}_{s_2}][]^2\big)
 \les \Big(\frac{|s_1 - s_2|^{n+1-\gamma}}{m}\Big)^{1/\gamma} 
\norm[\psi][\frac{2\gamma}{\gamma+2}]^2
\end{equation}
for every pair $s_1, s_2 \in [m,m+1]$, $m\ge 1$.
\end{proposition}

The power of $|s_1 - s_2|$ does not matter much so long as it is nonnegative.
Of considerably greater interest is the factor of $m^{-1/\gamma}$,
as it contributes meaningfully to the bound
\begin{equation*}
\Big|\norm[s_1^{(n-2)/4} \hat{\psi}_{s_1}][]^2 - 
\norm[s_2^{(n-2)/4} \hat{\psi}_{s_2}][]^2 \Big|  
\les \big(m^{-(\alpha + \frac{1}{\gamma})} + 
  m^{-(\alpha + 2 - \frac{n}{\gamma})}\big)
   \norm[\psi][\dot{W}^{\alpha,2\gamma/(\gamma+2)}]^2
\end{equation*}
for each pair $s_1, s_2 \in [m,m+1]$, $m \ge 1$.  The first term is derived
from~\eqref{eq:restrictions}, and the second (which is dominated by the
first) from the Stein-Tomas theorem.  As before, the $L^\infty$
norm of a function on a unit interval is controlled by the its $L^1$ norm 
and the diameter of its image.  Consequently,
\begin{align*}
\sum_{m\ge 0} \sup_{s\in[m,m+1]} \norm[s^{(n-2)/4}\hat{\psi}_s][]^2
 &\les \norm[\psi][\frac{2n}{n+2}]^2 
 + \sum_{m\ge 1} \bigg(\int_m^{m+1} s^{(n-2)/2}\norm[\hat{\psi}_s][]^2\,ds 
    + m^{-(\alpha + \frac{1}{\gamma})} 
     \norm[\psi][\dot{W}^{\alpha,2\gamma/(\gamma+2)}]^2 \bigg) \\
 &\les  \norm[\psi][\frac{2n}{n+2}]^2 
   + \norm[\psi][\dot{W}^{\alpha,2\gamma/(\gamma+2)}]^2
   + \int_1^\infty s^{(n-2)/2}\norm[\hat{\psi}_s][]^2\,ds \\
 &= \norm[\psi][\frac{2n}{n+2}]^2 +
    \norm[\psi][\dot{W}^{\alpha,2\gamma/(\gamma+2)}]^2 +\norm[\psi][2]^2
\end{align*}
provided the sum of $m^{-(\alpha + 1/\gamma)}$ is convergent.
\end{proof}
\begin{proof}[Proof of Proposition \ref{prop:restrictions}]
On each interval $[m,m+1]$ the function $s^{(n-2)/4}$ can be replaced by
the constant $m^{(n-2)/4}$.  Recalling the proof of the Stein-Tomas
theorem, Fourier restriction to the sphere is described by a convolution
operator, with the $TT^*$ estimate
\begin{equation} \label{eq:TT*}
\norm[\hat{\psi}_s][]^2 = C_n \int_{\R^{2n}} f(x) K\big(s^{\frac12}(x-y)\big)
 \overline{f(y)}\,dx dy.
\end{equation}
The kernel is an oscillatory function bounded pointwise by 
$|K(z)| \les \japanese[z]^{-(n-1)/2}$.  The related function
$\tilde{K}(z) = zK'(z)$ is also oscillatory, and bounded pointwise
by $\japanese[z]^{-(n-3)/2}$.  If $f$ is a Schwartz function it is permissible
to differentiate~\eqref{eq:TT*} with respect to $s$, obtaining
\begin{equation*}
\frac{d}{ds}\Big(\norm[\hat{\psi}_s][]^2\Big) = C_n s^{-1}\int_{\R^{2n}}
 f(x) \tilde{K}\big(s^{\frac12}(x-y)\big)\overline{f(y)}\, dx dy.
\end{equation*}
The same interpolation argument that proves the Stein-Tomas theorem also
suffices to show that convolution with $\tilde{K}$ is a bounded operator
from $L^{(2n+2)/(n+5)}(\R^n)$ to its dual space $L^{(2n+2)/(n-3)}(\R^n)$.
Combining this with the usual restriction estimate and scaling appropriately,
\begin{equation*}
m^{(n-2)/2}\Big|\norm[\hat{\psi}_{s_1}][]^2 - \norm[\hat{\psi}_{s_2}][]^2\Big|
 \les \max\Big( m^{1/(n+1)}\norm[f][\frac{2n+2}{n+3}]^2,\  
            m^{2/(n+1)}|s_1 - s_2|\norm[f][\frac{2n+2}{n+5}]^2\Big).
\end{equation*}
These represent the cases $\gamma = n+1$ and $\gamma = \frac{n+1}{2}$,
respectively.  The intermediate cases follow from Riesz-Thorin interpolation,
noting that the norm of a self-adjoint linear operator agrees with the
extremal value of its quadratic form.
\end{proof}

\begin{remark}
The proof of Lemma~\ref{lem:discreteKato} hinges on placing the spherical
restrictions of $\hat{\psi}$ inside a mixed-norm space $\ell^2(L^\infty)$ with
respect to the radial variable.  This consists of three essentially independent
requirements.
\renewcommand{\theenumi}{\arabic{enumi}}
\renewcommand{\labelenumi}{\theenumi.}
\begin{enumerate}
\item Because of the embedding $\ell^2(L^\infty) \subset \ell^2(L^2)$ and
the Plancherel identity, we must have $\psi \in L^2$.  This is the only way
to produce $\ell^2$ decay as $m \to \infty$.
\item Since $\ell^2(L^\infty)$ also embeds into $\ell^\infty(L^\infty)$, 
 the normalized restrictions $s^{(n-2)/4} \hat{\psi}_s$ must be uniformly
 bounded, in particular as $s \to 0$.  This is achieved so long as $\psi$
 belongs to either of the spaces $\japanese[x]^{-1}L^2$ or $L^{2n/(n+2)}$.
\item The norm of the restrictions must also be sufficiently continuous
 so that the $\ell^2(L^2)$ bound implied by the first item can be improved
 into $\ell^2(L^\infty)$.

Proposition~\ref{prop:restrictions} provides one estimate for the modulus
of continuity of $\norm[\hat{\psi}_s][]^2$ based on the Stein-Tomas
restriction theorem.  Another estimate, based on $L^2$ trace
properties, is available when $\japanese[x]^{\beta}\psi \in L^2$ for some
$\beta > \frac12$.  The latter bounds are well known from the proof of the
limiting absorption principle~\cite{RS4} and spectral theory of Schr\"odinger
operators.
\end{enumerate}
The norm spaces in the statement of Proposition~\ref{prop:restrictions}
were chosen to meet these requirements entirely with weights, or entirely with
homogeneous $L^p$ conditions, respectively.  To create a more comprehensive
list, one can mix and match the two approaches in any combination. 
A precise but unwieldy formulation is presented below.
\end{remark}
\begin{proposition}
The propagator of the free Schr\"odinger equation obeys the sampling
estimates
\begin{equation*}
\sum_{k\in\Z} |\la e^{-2\pi ik\Delta}f, \psi\ra|^2 \les
 \norm[f][2] \norm[\psi][]
\end{equation*}
where the norm of $\psi$ is taken in the interpolation space
\begin{equation*}
\psi \in L^2 \cap \big(\japanese[x]^{-1}L^2 + L^{\frac{2n}{n+2}}\big)
\cap \big(\japanese[x]^{-\frac12-\eps}L^2 + \dot{W}^{\frac{n}{n+1}+\eps, 
 \frac{2n+2}{n+3}}
  + \dot{W}^{\frac{n-1}{n+1}+\eps, \frac{2n+2}{n+5}}\big).
\end{equation*}
\end{proposition}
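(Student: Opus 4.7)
The plan is to revisit the proof of Lemma~\ref{lem:discreteKato} and distribute the three independent requirements listed in the remark preceding the proposition among the three factors of the stated intersection. As in that lemma, the bound reduces to establishing
\begin{equation*}
\sum_{m \ge 0} \sup_{s \in [m,m+1]} s^{(n-2)/2}\norm[\hat{\psi}_s][L^2(S^{n-1})]^2 \les \norm[\psi][]^2,
\end{equation*}
where $\norm[\psi][]$ denotes the intersection norm. Because the three factors enter in essentially independent ways, I would handle them with three separate decompositions of $\psi$.

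The $L^2$ factor gives, via Plancherel, the radial identity $\int_0^\infty s^{(n-2)/2}\norm[\hat{\psi}_s][]^2\,ds \les \norm[\psi][2]^2$. This controls the $L^1$ mean of $s^{(n-2)/2}\norm[\hat{\psi}_s][]^2$ on every unit interval $[m,m+1]$, so the only remaining work is to upgrade unit-interval means to suprema and to bound the function pointwise on $[0,1]$. The second factor $\japanese[x]^{-1}L^2 + L^{2n/(n+2)}$ supplies the latter: decomposing $\psi = \psi' + \psi''$, the bound $s^{(n-2)/4}\norm[(\hat{\psi}')_s][] \les \norm[\japanese[x]\psi'][2]$ derived in the proof of Lemma~\ref{lem:discreteKato} and the dual Stein--Tomas estimate $s^{(n-2)/4}\norm[(\hat{\psi}'')_s][] \les \norm[\psi''][2n/(n+2)]$ together yield a uniform pointwise upper bound that takes care of both the $s\in[0,1]$ contribution and absolute (as opposed to oscillatory) comparisons on each interval.

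The third factor provides the modulus of continuity of $s \mapsto \norm[\hat{\psi}_s][]^2$ on $[m,m+1]$ for $m \ge 1$. Given a decomposition $\psi = \psi_1 + \psi_2 + \psi_3$ into the three summands, I would treat the two Sobolev parts $\psi_2, \psi_3$ by direct application of Proposition~\ref{prop:restrictions} with $\gamma = n+1$ and $\gamma = (n+1)/2$ respectively; the choices $\alpha = \frac{n}{n+1}+\eps$ and $\alpha = \frac{n-1}{n+1}+\eps$ satisfy the condition $\alpha + 1/\gamma > 1$ and produce interval variations of size $m^{-1-\delta}\norm[\psi_j][\dot{W}^{\alpha,\cdot}]^2$ for some $\delta = \delta(\eps)>0$, which are absolutely summable in $m$. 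For the weighted part $\psi_1 \in \japanese[x]^{-1/2-\eps}L^2$, I would invoke Agmon--Kato--Kuroda type trace bounds, which assert that $s \mapsto \norm[(\hat{\psi}_1)_s][]$ has a fractional derivative of order $\tfrac12 + \eps$ with norm controlled by $\norm[\japanese[x]^{1/2+\eps}\psi_1][2]$. A Cauchy--Schwartz argument analogous to the one leading from \eqref{eq:derivative'} to the displayed supremum estimate in Lemma~\ref{lem:discreteKato} then produces interval variations of order $m^{-1-2\eps}\norm[\japanese[x]^{1/2+\eps}\psi_1][2]^2$, again summable.

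The main obstacle is the $\japanese[x]^{-1/2-\eps}L^2$ contribution: Lemma~\ref{lem:discreteKato} was proved only at the critical weight $\japanese[x]^{-1}L^2$, and reducing the weight to $\japanese[x]^{-1/2-\eps}$ costs half a degree of regularity in $s$ that must be recovered as an $m$-gain rather than an $m$-loss. The correct mechanism is real interpolation between the weighted derivative identity \eqref{eq:derivative'} (effectively $H^1$ in $s$ with weight $\japanese[x]^1$) and the plain Plancherel bound (effectively $L^2$ in $s$ with no weight), combined with a dyadic frequency decomposition of $\psi_1$ and geometric summation of the contributions from high-frequency shells; any other route tends to lose the factor of $m^{-\eps}$ needed for convergence.
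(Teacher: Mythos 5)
The paper does not actually supply a proof of this proposition; it is presented as a ``precise but unwieldy formulation'' of the three-ingredient scheme described in the remark, with the weighted modulus-of-continuity bounds referred to \cite{RS4} and the limiting-absorption-principle literature. Your proposal follows exactly that scheme and fills in the same outline the paper gestures at: the $L^2$ factor gives the radial Plancherel estimate, the second factor $\japanese[x]^{-1}L^2 + L^{2n/(n+2)}$ supplies the uniform bound on $s^{(n-2)/4}\norm[\hat\psi_s][]$ (including near $s=0$), and the third factor furnishes the modulus of continuity of the restriction norms. Your parameter choices for the two Sobolev summands are correct: with $(\gamma,\alpha) = (n+1, \tfrac{n}{n+1}+\eps)$ and $(\tfrac{n+1}{2},\tfrac{n-1}{n+1}+\eps)$ one gets $\alpha+1/\gamma = 1+\eps$, so Proposition~\ref{prop:restrictions} delivers a summable per-interval oscillation $m^{-1-\eps}$, and the Besov exponents $2\gamma/(\gamma+2)$ reproduce exactly the spaces $\dot W^{\cdot,\frac{2n+2}{n+3}}$ and $\dot W^{\cdot,\frac{2n+2}{n+5}}$ appearing in the statement.

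The one place your write-up overstates what is shown is the weighted summand. You assert that $\japanese[x]^{1/2+\eps}\psi_1 \in L^2$ produces per-interval variations ``of order $m^{-1-2\eps}\norm[\japanese[x]^{1/2+\eps}\psi_1][2]^2$,'' but this rate is not established and is not what the mechanism actually delivers. What the Agmon trace lemma gives is that $u(r) = r^{(n-1)/2}\norm[\hat\psi_r][L^2(S^{n-1})]$ belongs to $H^{1/2+\eps}$ on the halfline with norm $\les \norm[\japanese[x]^{1/2+\eps}\psi_1][2]$. Changing variables to $s=r^2$, the unit $s$-intervals $[m,m+1]$ correspond to $r$-intervals $I_m$ of length $\sim m^{-1/2}$, and what you want is $\sum_m |I_m|\sup_{I_m}u^2 < \infty$. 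The Plancherel piece handles $\sum_m \int_{I_m}u^2$, while the oscillation piece needs $\sum_m |I_m|(\operatorname{osc}_{I_m}u)^2 \les \norm[u][\dot H^{1/2+\eps}]^2$; the local Sobolev embedding gives $\operatorname{osc}_{I_m}u \les |I_m|^\eps\,\norm[u][\dot H^{1/2+\eps}]$ with the right localized seminorms, and summing uses $|I_m|^{1+2\eps} \le 1$ together with square-summability of the local fractional seminorms --- not a fixed power of $m$ per interval. The summability is thus aggregate rather than termwise, and the cleanest packaging is a Wiener-amalgam-type embedding $H^{1/2+\eps}(\R) \einbet \ell^2_m\bigl(L^\infty[m,m+1]\bigr)$ applied to the weighted radial trace, rather than the dyadic-in-frequency/interpolation route you sketch (which would also work, but is not the shortest path). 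With that replacement, your proof goes through and is, as far as I can tell, the only proof offered for this proposition; the paper itself contains none.
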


\bibliographystyle{amsplain}

\end{document}